\newcommand\R{\mathbb R}
\newcommand\C{\mathbb C}
\newcommand{\x}{\mbox{\boldmath $x$}}
\newcommand{\de}{\Delta}
\newcommand{\la}{\lambda}
\newcommand{\1}{\mbox{\boldmath $1$}}
\newcommand{\Lip}{\operatorname{Lip}}
\newcommand{\G}{\operatorname{GWC}}
\newcommand{\Iso}{\operatorname{Iso}}
\newcommand{\id}{\operatorname{Id}}
\newcommand{\Sd}{\operatorname{S}^{\infty}(\mathbb{D})}
\newcommand{\isonly}{\operatorname{Iso}}
\newcommand{\isonlyc}{\isonly_{\mathbb C}}
\newcommand{\Mmc}{M_{\mathbb C}(B_1,B_2)}
\newcommand{\mc}{\mathbb C}
\newcommand{\mr}{\mathbb R}
\newtheorem{theorem}{Theorem}
\newtheorem{lemma}[theorem]{Lemma}
\newtheorem{cor}[theorem]{Corollary}
\newtheorem{prop}[theorem]{Proposition}
\theoremstyle{definition}
\newtheorem{definition}[theorem]{Definition}
\theoremstyle{remark}
\newtheorem{remark}[theorem]{Remark}
\begin{document}
\author{
Shiho Oi
}
\address{
Niigata Prefectural Hakkai High School, Minamiuonuma 
949-6681 Japan.
}
\email{shiho.oi.pmfn20@gmail.com
}

%\thanks{}
\title[]
{A generalization of the Kowalski -S\l odkowski theorem and its application to 2-local maps on function spaces}

\keywords{the Kowalski-S\l odkowski theorem, 2-local maps, surjective isometries
}

\subjclass[2010]{46B04, 46B20, 46J10, 46J15, 30H05}

%\submitted{}

\begin{abstract}
In this paper, we extend a spherical variant of the Kowalski-S\l odkowski theorem due to Li, Peralta, Wang and Wang \cite{lpww}. As a corollary, we prove that every 2-local map in the set of all surjective isometries (without assuming linearity) on a certain function space is in fact a surjective isometry. This gives an affirmative answer to the problem on 2-local isometries posed by Moln\'ar.
\end{abstract}
\maketitle
%%%%%%%%%%%%%%%%%%%%%%%%%%%%%%%%%%%%%%%%%%%%%%%%%%%%%%%%%%%%%%%%%%%%%%%%%%%%%%%%%%%%%%%%%%%%%%%%%%%%%%%%%%%%%%%%%%%%%%%%%%%%%%%%%%%%%%%%%%%%%%%%%%%%%%%%%%%%%%%%%%%%%%%%%%%%%%%%%%%%%%%%%%%%%%%%%%%%%%%%%%%%%%%%%%%%%%%%%%%%%%%%%%%%%%%%%%%%%%%%%%%%%%%%%%%%%%%%%%%%%%%%%%%%%%%%%%%%%%%%%%%%%%%%%%%%%%%%%%%%%%%%%%%%%%%%%%%%%%%%%%%%%%%%%%%%%%%%%%%%%%%%%%%%%%%%%%%%%%%%%%%%%%%%%%
\section{Introduction}
%%%%%%%%%%%%%%%%%%%%%%%%%%%%%%%%%%%%%%%%%%%%%%%%%%%%%%%%%%%%%%%%%%%%%%%%%%%%%%%%%%%%%%%%%%%%%%%%%%%%%%%%%%%%%%%%%%%%%%%%%%%%%%%%%%%%%%%%%%%%%%%%%%%%%%%%%%%%%%%%%%%%%%%%%%%%%%%%%%%%%%%%%%%%%%%%%%%%%%%%%%%%%%%%%%%%%%%%%%%%%%%%%%%%%%%%%%%%%%%%%%%%%%%%%%%%%%%%%%%%%%%%%%%%%%%%%%%%%%%%%%%%%%%%%%%%%%%%%%%%%%%%%%%%%%%%%%%%%%%%%%%%%%%%%%%%%%%%%%%%%%%%%%%%%%%%%%%%%%%%%%%%%%%%%%
One of the basic problem in the operator theory is to find sufficient sets of conditions for linearity and multiplicativity of maps between Banach algebras. As a generalization of the Gleason-Kahane-\.Zelazko theorem \cite{gleason,kz1,zelazko}, Kowalski and S\l odkowski \cite{ks} proved the linearity and the multiplicativity of a functional $\de$  on a Banach algebra $A$ under the spectral condition; 
%$\delta(a)-\delta(b)\subset \sigma(a-b)$ 
$\de(a)-\de(b) \in \sigma(a-b)$ for $a,b\in A$. Recently Li, Peralta, Wang and Wang proved interesting spherical variants of the Gleason-Kahane-\.Zelazko theorem, and the Kowalski-S\l odkowski \cite{lpww}. They proved that a 1-homogeneous functional on a unital Banach algebra which satisfy a mild spectral condition is linear. Applying it they proved the complex-linearity of certain 2-local maps.

Motivated by the Kowalski-S\l odkowski theorem, the concept of a 2-local map was introduced by \v Semrl \cite{smrl}, who proved the first results on 2-local automorphisms and derivations on algebras of operators. Moln\'ar \cite{mol2l} began to study 2-local complex-linear isometries. 
Given a Banach space $\mathfrak{M}_j$ for $j=1,2$, an isometry from $\mathfrak{M}_1$ into $\mathfrak{M}_2$ is a distance preserving map. The set of all surjective complex-linear isometries from $\mathfrak{M}_1$ onto $\mathfrak{M}_2$ is denoted by $\operatorname{Iso}_{\mc}(\mathfrak{M}_1,\mathfrak{M}_2)$. 
The set of all maps from ${\mathfrak M}_1$ into ${\mathfrak M}_2$ is denoted by $M({\mathfrak M}_1, {\mathfrak M}_2)$. We say that a map $T\in M({\mathfrak M}_1, {\mathfrak M}_2)$ is 2-local complex-linear isometry if for every $x,y\in {\mathfrak M}_1$ there is a $T_{x,y}\in \isonly_{\mc}(\mathfrak{M}_1,\mathfrak{M}_2)$ 
%{\it surjective} isometry (resp. complex-linear isometry, real-linear isometry) $T_{x,y}:{\mathfrak M}_1\to {\mathfrak M}_2$ 
such that $T(x)=T_{x,y}(x)$ and $T(y)=T_{x,y}(y)$. 
%\[
%\text{$T(x)=T_{x,y}(x)$ and $T(y)=T_{x,y}(y)$}.
%\]
%A 2-local complex-linear isometry is necessarily an isometry by the assumption, but it needs not to be surjective in general.  Once surjectivity of a 2-local map is obtained, it is a surjective complex-linear isometry.  
Moln\'ar \cite{mol2l} proved that a 2-local complex-linear isometry on a certain $C^*$-algebra is a surjective complex-linear isometry. Initiated by his result, there are a lot of studies on 2-local complex-linear isometries on operator algebras and function spaces assuring that a 2-local complex-linear isometry is in fact a surjective complex-linear isometry \cite{ahhf,bjm,gy,hmto,jlp,jvvv,lpww,molbook,mol2l}.

Analogous  to the problem on a 2-local complex-linear isometry, Moln\'ar raised a problem on a 2-local isometry. 
The set of all surjective isometries  (not necessarily linear) from ${\mathfrak{M}}_1$ onto ${\mathfrak{M}}_2$ is denoted by $\operatorname{Iso}(\mathfrak{M}_1,\mathfrak{M}_2)$. We say that $T\in M({\mathfrak M}_1, {\mathfrak M}_2)$ is a 2-local isometry or $T$ is 2-local in $\operatorname{Iso}(\mathfrak{M}_1,\mathfrak{M}_2)$ if for every $x,y\in {\mathfrak M}_1$ there is a $T_{x,y}\in \operatorname{Iso}(\mathfrak{M}_1,\mathfrak{M}_2)$
%{\it surjective} isometry (resp. complex-linear isometry, real-linear isometry) $T_{x,y}:{\mathfrak M}_1\to {\mathfrak M}_2$ 
such that 
%\begin{center}
%$T(x)=T_{x,y}(x)$ and $T(y)=T_{x,y}(y)$.
%\end{center}
\[
\text{$T(x)=T_{x,y}(x)$ and $T(y)=T_{x,y}(y)$}.
\]
A 2-local isometry is necessarily an isometry by the assumption, but it needs not to be surjective in general.  Once surjectivity of a 2-local map is obtained, it is a surjective  isometry.  
Moln\'ar \cite{mol2loc2} proved that every 2-local isometry in $\isonly(B(H),B(H))$  is an element in $\isonly(B(H),B(H))$, where $B(H)$ is the Banach algebra of all bounded linear operators on a separable Hilbert space $H$.  One may suppose that there is not so big difference between the problem on 2-local complex-linear isometries and one on 2-local isometries, but the fact is that the later is highly nontrivial (cf. \cite{mol2loc2,haoi}).  Moln\'ar posed a question whether a 2-local map in $\isonly(C(X),C(X))$ is an element in $\isonly(C(X),C(X))$ or not for a  first countable compact Hausdorff space $X$ \cite{mo2loc}. (It is known that it is not the case without assuming the first countability.)  Even for $X=[0,1]$ the answer to the problem has not been known. Inspired by his problem,  Hatori and the author proved that a 2-local map in $\isonly(B,B)$ is an element in $\isonly(B,B)$, where $B$ is the Banach space of all continuously differentiable functions or the Banach space of Lipschitz functions on the closed unit interval equipped with a certain norm \cite{haoi}.

The aim of this paper is to show a generalization of a spherical variant of  theorem of Kowalski and S\l odkowski exhibited in \cite{lpww}. Applying it, we prove that 2-local isometries on several function spaces are surjective isometries. In particular, we give an affirmative answer to the problem of Moln\'ar (Corollary \ref{C(X)}). Note that Mori \cite{hamo} also got an affirmative answer for the problem in a different way.

In this paper, we denote the unit circle on the complex plane by $\mathbb{T}=\{z \in \mathbb{C}; |z|=1\}$. For the simplicity of the notation we denote $[f]^1=f$ and $[f]^{-1}=\overline{f}$, the complex-conjugate of $f$ for any complex-valued function $f$. For any unital Banach algebra, $\1$ stands for the unity of itself. The identity map is denoted by $\id$.

%%%%%%%%%%%%%%%%%%%%%%%%%%%%%%%%%%%%%%%%%%%%%%%%%%%%%%%%%%%%%%%%%%%%%%%%%%%%%%%%%%%%%%%%%%%%%%%%%%%%%%%%%%%%%%%%%%%%%%%%%%%%%%%%%%%%%%%%%%%%%%%%%%%%%%%%%%%%%%%%%%%%%%%%%%%%%%%%%%%%%%%%%%%%%%%%%%%%%%%%%%%%%%%%%%%%%%%%%%%%%%%%%%%%%%%%%%%%%%%%%%%%%%%%%%%%%%%%%%%%%%%%%%%%%%%%%%%%%%%%%%%%%%%%%%%%%%%%%%%%%%%%%%%%%%%%%%%%%%%%%%%%%%%%%
\section{generalization of the  Kowalski-S\l odkowski theorem}
%%%%%%%%%%%%%%%%%%%%%%%%%%%%%%%%%%%%%%%%%%%%%%%%%%%%%%%%%%%%%%%%%%%%%%%%%%%%%%%%%%%%%%%%%%%%%%%%%%%%%%%%%%%%%%%%%%%%%%%%%%%%%%%%%%%%%%%%%%%%%%%%%%%%%%%%%%%%%%%%%%%%%%%%%%%%%%%%%%%%%%%%%%%%%%%%%%%%%%%%%%%%%%%%%%%%%%%%%%%%%%%%%%%%%%%%%%%%%%%%%%%%%%%%%%%%%%%%%%%%%%%%%%%%%%%%%%%%%%%%%%%%%%%%%%%%%%%%%%%%%%%%%%%%%%%%%%%%%%%%%%%%%%%%%
%The following theorem is a generalization of the spherical variant of  the Kowalski-S\l odkowski theorem due to Li, Peralta, Wang and Wang \cite{lpww}. 
%The resulting functional is complex-linear or conjugate-linear, while the assumption of the 1-homogeneity of the map, which is required in \cite[Proposition 3.2]{lpww}, is removed.
Li, Peralta, Wang and Wang \cite{lpww} proved a spherical variant of the Kowalski-S\l odkowski theorem; a 1-homogeneous functional which satisfies certain spectral condition is complex-linear. We shall prove a generalization of spherical variant of the Kowalski-S\l odkowski theorem %under the weaker hypothesis that $\de(0)=0$ than 
without hypothesis that the 1-homogeneity.
%a functional $\de$ keeps $0$ fixed, rather than 1-homogeneous. 
This hypothesis is the same as one of the original Kowalski-S\l odkowski theorem.
%at the cost that the resulting functional is complex-linear or conjugate-linear, the condition of 1-homogeneity is replaced by a simple one.   
\begin{theorem}\label{gks}
Let $A$ be a unital Banach algebra. Suppose that a map $\de: A \to \C$ satisfies the conditions
\def\theenumi{\alph{enumi}}
\begin{enumerate}
\item $\de(0)=0$,
\item $\de(x)-\de(y) \in \mathbb{T}\sigma(x-y), \quad x,y \in A$.
\end{enumerate}
Then $\de$ is a complex-linear or conjugate linear and $\overline{\de(\1)}\de$ is multiplicative.
\end{theorem}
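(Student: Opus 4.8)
The plan is to reduce Theorem~\ref{gks} to the 1-homogeneous spherical variant of Kowalski--S\l odkowski proved in \cite{lpww}, essentially by extracting the 1-homogeneity from the spectral condition (b) together with the normalization $\de(0)=0$. First I would observe that condition (b) forces $|\de(x)-\de(y)|\le \|x-y\|$ for all $x,y$ (since the spectral radius is dominated by the norm and $\mathbb T$ has modulus one), so $\de$ is $1$-Lipschitz; in particular it is continuous, and taking $y=0$ gives $|\de(x)|\le \|x\|$. The crucial first step is to pin down $\lambda:=\de(\1)$: applying (b) with $x=\1,y=0$ gives $\lambda\in\mathbb T\sigma(\1)=\mathbb T$, so $|\lambda|=1$. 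Replacing $\de$ by $\bar\lambda\de$ (which still satisfies (a), satisfies (b) because $\mathbb T$ is rotation-invariant, and now sends $\1$ to $1$), I may assume $\de(\1)=1$ and must show $\de$ is linear or conjugate-linear and multiplicative.

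Next I would establish homogeneity over $\mathbb T$ and then over $\mathbb R$, en route to full $\mathbb C$-homogeneity (up to conjugation). For $\alpha\in\mathbb T$, consider $g(x):=\bar\alpha\,\de(\alpha x)$; I would check via (b) that $g$ again satisfies (a),(b), and then argue that the family of such maps is rigid enough that $g=\de$, i.e.\ $\de(\alpha x)=\alpha\de(x)$ for $\alpha\in\mathbb T$ --- this is the analogue of the circle-homogeneity step in \cite{lpww} and should follow from their arguments applied to our $\de$, since our hypotheses are a superset of theirs once we know the map is bounded. Real homogeneity $\de(tx)=t\de(x)$ for $t\ge 0$ is the genuinely new point: here I would use that for $s,t\ge 0$, $\de(sx)-\de(tx)\in\mathbb T\sigma((s-t)x)=\mathbb T(s-t)\sigma(x)$, which constrains the function $t\mapsto\de(tx)$ to be ``piecewise linear along rays'' in a way that, combined with the Lipschitz bound and $\de(0)=0$, forces $\de(tx)=t\de(x)$ for $t\ge0$; combining with circle-homogeneity yields $\de(\alpha x)=\alpha\de(x)$ for all $\alpha\in\mathbb C$. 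Once $1$-homogeneity is in hand, $\de$ satisfies all the hypotheses of the spherical Kowalski--S\l odkowski theorem of \cite{lpww}, and I would invoke it directly to conclude $\de$ (hence the original $\bar\lambda$-rescaling) is complex-linear and multiplicative; the conjugate-linear alternative enters because at the homogeneity stage the only constraint is $\de(\alpha x)\in\{\alpha\de(x)\}$ up to the possibility that the rigidity argument instead forces $\de(\alpha x)=\bar\alpha\de(x)$, and one must carry both branches through, with the conjugate branch reducing to the linear case by passing to $x\mapsto\overline{\de(\bar\cdot\,)}$ or by applying the \cite{lpww} theorem to the conjugated data.

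The main obstacle I anticipate is precisely the real-homogeneity step and the bookkeeping of the linear-versus-conjugate-linear dichotomy. Dropping $1$-homogeneity means I cannot a priori split $\de$ into its behavior on $\mathbb T x$ versus $[0,\infty)x$, and the spectral condition (b) relates $\de$ at $x$ and at $y$ only through $\sigma(x-y)$, which is a set; extracting from ``$\de(sx)-\de(tx)\in(s-t)\mathbb T\sigma(x)$ for all $s,t$'' the sharp conclusion that the scalar map $t\mapsto\de(tx)$ is genuinely linear (not merely that its increments lie in a union of circles) requires care --- one likely route is to fix $x$ with $\sigma(x)\ne\{0\}$, pick $\mu\in\sigma(x)$, and show the planar curve $t\mapsto\de(tx)$ has the property that all its chords are ``radial'' after rescaling, which pins it to a line through the origin; the degenerate case $\sigma(x)=\{0\}$ (quasinilpotents) must be handled separately, using that then $\de(tx)-\de(0)\in\mathbb T\cdot\{0\}=\{0\}$, so $\de$ vanishes on the ray. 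A secondary subtlety is verifying that $\bar\alpha\de(\alpha\cdot)$ and similar rescalings truly satisfy (b): this uses $\mathbb T\sigma(z)=\mathbb T\sigma(\alpha z)$ when $|\alpha|=1$, together with $\alpha\sigma(w)=\sigma(\alpha w)$, so the verification is routine but must be stated. Once these are cleared, the reduction to \cite{lpww} is clean, so the bulk of the work --- and of the write-up --- will be the homogeneity extraction.
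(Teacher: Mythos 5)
There is a genuine gap, and it sits exactly where you anticipated: the homogeneity extraction. First, your circle-homogeneity step has no justification. The map $g(x)=\bar\alpha\,\de(\alpha x)$ does satisfy (a) and (b), but conditions (a),(b) come nowhere near determining the map: every character $\chi$, every conjugate $\bar\chi$, and every unimodular multiple $u\chi$ satisfies them, so there is no ``rigidity'' argument that forces $g=\de$, and the arguments of \cite{lpww} cannot supply one, since their theorem \emph{assumes} $1$-homogeneity as a hypothesis rather than deriving it --- deriving it is precisely the content of Theorem \ref{gks}. Second, the real-homogeneity step rests on a misreading of the constraint. The condition $\de(sx)-\de(tx)\in(s-t)\mathbb{T}\sigma(x)$ only says that the \emph{modulus} of each increment lies in $|s-t|\cdot\{|\mu|:\mu\in\sigma(x)\}$; it says nothing about directions, so the chords of $t\mapsto\de(tx)$ are not ``radial after rescaling.'' When $\sigma(x)$ contains points of different moduli (any $x$ whose spectrum is not contained in a single circle), the condition is merely a two-sided Lipschitz-type bound and does not force the curve $t\mapsto\de(tx)$ to be affine, let alone linear. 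The isometry-on-$\C$ trick works only along $\1$ (where $\mathbb{T}\sigma(\lambda\1)=|\lambda|\mathbb{T}$), which is exactly how the paper uses it: it establishes the dichotomy only in the $\1$-direction, defining $A_1$ and $A_{-1}$.

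The paper then gets from this one-direction information to full linearity by a different and essentially analytic route that your proposal omits entirely: $\de$ is Lipschitz, hence (Mankiewicz/\cite[Theorem 2.3]{ks}) has real differentials off a null set in separable subalgebras; each differential $(D\de)_a$ satisfies $(D\de)_a(x)\in\mathbb{T}\sigma(x)$, so by \cite[Lemma 3.3]{lpww} it is linear or conjugate-linear, and membership of $a$ in $A_1$ forces the linear alternative; then \cite[Lemma 2.4]{ks} gives holomorphy of $\de$, and Liouville's theorem applied to $\lambda\mapsto\de(\lambda a+b)-\de(b)$ yields homogeneity and midpoint additivity, hence complex-linearity, with the non-separable case handled by restricting to the separable subalgebra generated by two elements. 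Finally, the linear-versus-conjugate-linear branch is fixed \emph{globally} in the paper by showing $A_1$ and $A_{-1}$ are closed, disjoint and cover the connected space $A$ (Lemma \ref{1}); your proposal carries ``both branches'' pointwise but never addresses why the choice is consistent across different elements and scalars. Multiplicativity via \cite[Proposition 2.2]{lpww} is the one part of your reduction that would go through once linearity is known, but as written the homogeneity steps on which everything else depends do not hold.
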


Fix $a \in A$, we define a map $f: \C \to \C$ by $f(\la)=\de(a+\la\cdot \1)-\de(a)$. For any $\la_1, \la_2 \in \C$, we get
\[
\de(a+\la_1\cdot \1)-\de(a+\la_2\cdot \1) \in \mathbb{T} \sigma((\la_1-\la_2)\cdot \1)=(\la_1-\la_2)\mathbb{T}\sigma(\1)=(\la_1-\la_2)\mathbb{T},
\]
by the assumption (b). Thus we have 
\begin{equation*}
\begin{split}
|f(\la_1)-f(\la_2)|&=|\de(a+\la_1\cdot \1)-\de(a)-\{\de(a+\la_2\cdot \1)-\de(a)\} |\\
&=|\de(a+\la_1\cdot \1)-\de(a+\la_2\cdot \1) |\\
&=|\la_1-\la_2|.
\end{split}
\end{equation*}
This implies that the map $f$ is an isometry on $\C$. The form of an isometry on $\C$ is well known. Without assuming surjectivity on the isometry there exist $\alpha,\beta \in \C$ with $|\alpha|=1$ such that $f(\lambda)=\beta+\lambda\alpha$ ($\lambda\in \C$) or $f(\lambda)=\beta+\bar{\lambda}\alpha$ ($\lambda \in \C$). Since 
\[
f(0)=\de(a+0 \cdot \1)-\de(a)=\de(a)-\de(a)=0,
\]
 we have
\[
f(\la)=\la \alpha, \quad \la \in \C,
\]
or
\[
f(\la)=\overline{\la} \alpha, \quad \la \in \C.
\]
In addition, we have $\alpha=f(1)=\de(a+\1)-\de(a)$, we infer that
\[
\de(a+\la\cdot \1)-\de(a)=\la\{\de(a+\1)-\de(a)\}, \quad \la \in \C,
\]
or
\[
\de(a+\la\cdot \1)-\de(a)=\overline{\la}\{\de(a+\1)-\de(a)\}, \quad \la \in \C.
\]
Let
\[
A_1=\{a \in A;  \de(a+\la\cdot \1)-\de(a)=\la\{\de(a+\1)-\de(a)\}, \quad \la \in \C\}
\]
and 
\[
A_{-1}=\{a \in A;  \de(a+\la\cdot \1)-\de(a)=\overline{\la}\{\de(a+\1)-\de(a)\}, \quad \la \in \C\}.
\]
For any $a\in A$, the map $\lambda\mapsto \de(a+\la\cdot \1)-\de(a)$ is an isometry on $\C$, we have $A=A_1 \cup A_{-1}$.
%%%%%%%%%%%%%%%%%%%%%%%%%%%%%%%%%%%%%%%%%%%%%%%%%%%%%%%%%%%%%%%%%%%%%%%%%%%%%%%%%%%%%%%%%%%%%%%%%%%%%%%%%%%%%%%%%%%%%%%%%%%%%%%%%%%%%%%%%%%%%%%%%%%%%%%%%%%%%%%%%%%%%%%%%%%%%%%%%%%%%%%%%%%%%%%%%%%%%%%%%%%%%%%%%%%%%%%%%%%%%%%%%%%%%%%%%%%%%%%%%%%%%%%%%%%%%%%%%%%%%%%%%%%%%%%%%%%%%%%%%%%%%%%%%%%%%%%%%%%%%%%%%%%%%%%%%%%%%%%%%%%%%%%%%%%%%%%%%%%%%%%%%%%%%%%%%%%%%%%%%%%%%%%%%%%%%%%%%%%%%%%%%%%%%%%%%%%%%%%%%%%%%%%%%%%%%%%
\begin{lemma}\label{1}
We have $A=A_1$ or $A=A_{-1}$.
\end{lemma}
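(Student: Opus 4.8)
The plan is to turn the partition $A=A_1\cup A_{-1}$ into a topological dichotomy: I will produce a continuous function on $A$ that takes only the two values $i$ and $-i$, with $A_1$ and $A_{-1}$ sitting inside its two fibres, and then use the fact that $A$, being a normed linear space, is connected. So the continuous function must be constant, which forces $A=A_1$ or $A=A_{-1}$.

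First I would record the two ingredients this needs. (1) The map $\de$ is continuous: by hypothesis (b), $\de(x)-\de(y)=t s$ for some $t\in\mathbb T$ and $s\in\sigma(x-y)$, hence $|\de(x)-\de(y)|=|s|\le r(x-y)\le\|x-y\|$, so $\de$ is $1$-Lipschitz. (2) For each $a\in A$ set $\alpha(a)=\de(a+\1)-\de(a)$; from the analysis of the isometry $f(\la)=\de(a+\la\cdot\1)-\de(a)$ carried out just before the lemma we have $\alpha(a)=f(1)=f(1)-f(0)$, so $|\alpha(a)|=1$. In particular $\alpha(a)\neq 0$ for every $a$.

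Then I would define
\[
h(a)=\frac{\de(a+i\cdot\1)-\de(a)}{\de(a+\1)-\de(a)},\qquad a\in A,
\]
which is continuous on $A$ because the numerator and denominator are continuous in $a$ and the denominator never vanishes. If $a\in A_1$ then $\de(a+i\cdot\1)-\de(a)=i\,\alpha(a)$, so $h(a)=i$; if $a\in A_{-1}$ then $\de(a+i\cdot\1)-\de(a)=\overline{i}\,\alpha(a)=-i\,\alpha(a)$, so $h(a)=-i$. Since $A=A_1\cup A_{-1}$, the range of $h$ lies in the discrete two-point set $\{i,-i\}$ (and, incidentally, $A_1\cap A_{-1}=\varnothing$, as $i\,\alpha(a)=-i\,\alpha(a)$ would force $\alpha(a)=0$). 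Because $A$ is connected, $h$ is constant. If $h\equiv i$, then no $a$ can lie in $A_{-1}$ (that would give $h(a)=-i$), so $A=A_1$; symmetrically, $h\equiv -i$ gives $A=A_{-1}$.

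The argument is short, and I do not expect a genuine analytic obstacle: the continuity of $\de$ is handed to us by (b). The one thing that has to be set up correctly is the choice of the ``detector'' $h$ (evaluating the isometry at $\la=i$ is what distinguishes linear from conjugate-linear) together with the observation that $\alpha(a)$ is never zero, which is exactly what makes $h$ well defined and continuous so that the connectedness of $A$ can be brought to bear.
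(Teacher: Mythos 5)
Your argument is correct and is essentially the paper's argument in a slightly different packaging: both proofs rest on the $1$-Lipschitz continuity of $\de$ extracted from hypothesis (b) and on the connectedness of $A$, with the linear/conjugate-linear dichotomy detected through the values of $\de(a+\1)-\de(a)$ and $\de(a+i\cdot\1)-\de(a)$. Where the paper shows directly that $A_1$ and $A_{-1}$ are disjoint closed sets covering $A$ (via a net-limit argument), you encode the same dichotomy in the continuous $\{i,-i\}$-valued function $h(a)=\bigl(\de(a+i\cdot\1)-\de(a)\bigr)/\bigl(\de(a+\1)-\de(a)\bigr)$ and conclude by constancy of a continuous map from a connected space into a discrete set; this is a clean streamlining, not a genuinely different route.
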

%%%%%%%%%%%%%%%%%%%%%%%%%%%%%%%%%%%%%%%%%%%%%%%%%%%%%%%%%%%%%%%%%%%%%%%%%%%%%%%%%%%%%%%%%%%%%%%%%%%%%%%%%%%%%%%%%%%%%%%%%%%%%%%%%%%%%%%%%%%%%%%%%%%%%%%%%%%%%%%%%%%%%%%%%%%%%%%%%%%%%%%%%%%%%%%%%%%%%%%%%%%%%%%%%%%%%%%%%%%%%%%%%%%%%%%%%%%%%%%%%%%%%%%%%%%%%%%%%%%%%%%%%%%%%%%%%%%%%%%%%%%%%%%%%%%%%%%%%%%%%%%%%%%%%%%%%%%%%%%%%%%%%%%%%%%%%%%%%%%%%%%%%%%%%%%%%%%%%%%%%%%%%%%%%%%%%%%%%%%%%%%%%%%%%%%%%%%%%%%%%%%%%%%%%%%%%%%
\begin{proof}
We have proved that $A=A_1 \cup A_{-1}$. We prove that $A_1$ and $A_{-1}$ are closed subsets of $A$. Let $\{a_\alpha \}\subset A_1$ be a net and $a_0 \in A$ such that $a_\alpha\to a_0$. By assumption (b), $\de(a_\alpha)-\de(a_0) \in \mathbb{T}\sigma(a_\alpha-a_0)$. 
Hence $|\de(a_\alpha)-\de(a_0)|\le r(a_\alpha-a_0)$ for the spectral radius $r(\cdot)$. Since $r(\cdot)\le \|\cdot\|$ for the 
original norm $\|\cdot\|$ on $A$,  we get $\de(a_\alpha)-\de(a_0) \to 0$  as  $a_\alpha \to a_0$. In the same way we have that $\de(a_0+\la\cdot \1)-\de(a_\alpha+\la\cdot \1)\to 0$ as $a_\alpha \to a_0$ for any $\lambda\in \C$.
Thus for any $\la \in \C$, we have
\begin{multline*}
|\de(a_0+\la\cdot \1)-\de(a_0)-\la\{\de(a_0+\1)-\de(a_0)\}|\\
=|\de(a_0+\la\cdot \1)-\de(a_0)- \{\de(a_\alpha+\la\cdot \1)-\de(a_\alpha)\}\\ 
+ \la\{\de(a_\alpha+\1)-\de(a_\alpha)\}-\la\{\de(a_0+\1)-\de(a_0)\}|\\
\le|\de(a_0+\la\cdot \1)-\de(a_\alpha+\la\cdot \1)|+|\de(a_0)-\de(a_\alpha)|\\+|\la||\de(a_\alpha+\1)-\de(a_0+\1)|+|\la||\de(a_\alpha)-\de(a_0)|\\
\to 0,
\end{multline*}
as $a_\alpha \to a_0$. This implies that $\de(a_0+\la\cdot \1)-\de(a_0)=\la\{\de(a_0+\1)-\de(a_0)\}$ for any $\la \in \C$. Since $a_0 \in A_1$, we have $A_1$ is closed. We can prove that $A_{-1}$ is also closed in the same way. In addition, suppose that $a\in A_1 \cap A_{-1}$. Then we have for any $\la \in \C$, 
\[
\la\{\de(a+\1)-\de(a)\}=\de(a+\la\cdot \1)-\de(a)=\overline{\la}\{\de(a+\1)-\de(a)\}.
\]
This shows that $\de(a+\1)-\de(a)=0$. On the other hand, we have
\[
\de(a+\1)-\de(a) \in \mathbb{T}\sigma(\1)=\mathbb{T}.
\]
We arrive at a contradiction. Therefore $ A_1 \cap A_{-1} = \emptyset$. Since $A$ is connected, we conclude that $A_1=A$ or $A_{-1}=A$.
\end{proof}
%%%%%%%%%%%%%%%%%%%%%%%%%%%%%%%%%%%%%%%%%%%%%%%%%%%%%%%%%%%%%%%%%%%%%%%%%%%%%%%%%%%%%%%%%%%%%%%%%%%%%%%%%%%%%%%%%%%%%%%%%%%%%%%%%%%%%%%%%%%%%%%%%%%%%%%%%%%%%%%%%%%%%%%%%%%%%%%%%%%%%%%%%%%%%%%%%%%%%%%%%%%%%%%%%%%%%%%%%%%%%%%%%%%%%%%%%%%%%%%%%%%%%%%%%%%%%%%%%%%%%%%%%%%%%%%%%%%%%%%%%%%%%%%%%%%%%%%%%%%%%%%%%%%%%%%%%%%%%%%%%%%%%%%%%%%%%%%%%%%%%%%%%%%%%%%%%%%%%%%%%
\begin{proof}[Proof of Theorem \ref{gks}]

Lemma \ref{1} shows that 
one of $A=A_1$ and $A=A_{-1}$ occurs.
%two cases, that is $A=A_1$ or $A=A_{-1}$, can occur. 
First we take up the case $A=A_{1}$.  

(i) We consider the case that $A$ is separable.
By the definition of $A_1$, for any $a \in A_1$, we get
\begin{equation}\label{(1)}
\de(a+\la\cdot \1)-\de(a)=\la\{\de(a+\1)-\de(a)\}, \quad \la \in \C.
\end{equation}
 By assumption (b), it follows that 
\[
|\de(a)-\de(b)| \le \|a-b\|, \quad a,b \in A,
\]
which implies that $\de$ is a Lipschitz map. \cite[Theorem 2.3]{ks} (\cite[Theorem 3.4]{lpww}) shows that $\de$ has real differentials except some zero set. We say that $\de$ has a real differential at a point of $a \in A$ if for every $x \in A$ the derivative $\displaystyle \de'_{x}(a)=\lim_{\R \ni r \to 0}\frac{\de(a+rx)-\de(a)}{r}$ exists and the map $(D\de)_a: A \to \C$, defined by $(D\de)_a(x)=\de'_{x}(a)$, is real linear and continuous. (cf.\cite{ma,ks,lpww}.) Since 
\[
\displaystyle \frac{\de(a+rx)-\de(a)}{r} \in \frac{\mathbb{T}\sigma(rx)}{r}= \frac{r\mathbb{T}\sigma(x)}{r}=\mathbb{T}\sigma(x), \quad r \in \mathbb{R}\setminus \{0\},
\]
 we have
\[
(D\de)_{a}(x)=\lim_{\R \ni r \to 0}\frac{\de(a+rx)-\de(a)}{r}\in \mathbb{T}\sigma(x).
\]
As $(D\de)_{a}$ is a real linear, \cite[Lemma 3.3]{lpww} implies that $(D\de)_{a}$ is a complex-linear or conjugate linear. Since $a \in A=A_{1}$, $\de$ satisfies (\ref{(1)}), we have 
\begin{equation*}
\begin{split}
(D\de)_{a}(\1)=\lim_{r \to 0}\frac{\de(a+r\1)-\de(a)}{r}
&=\lim_{r \to 0}\frac{r\{\de(a+\1)-\de(a)\}}{r}\\
&=\de(a+\1)-\de(a) \in \mathbb{T}\sigma(1)=\mathbb{T},
\end{split}
\end{equation*}
and
\begin{equation*}
\begin{split}
(D\de)_{a}(i\1)=\lim_{r \to 0}\frac{\de(a+ri\1)-\de(a)}{r}
&=\lim_{r \to 0}\frac{ri\{\de(a+\1)-\de(a)\}}{r}\\
&=i\{\de(a+\1)-\de(a)\}.
\end{split}
\end{equation*}
It follows that $(D\de)_{a}(i\1)=i(D\de)_{a}(\1)$ and $(D\de)_{a}(\1)\neq 0$. We conclude that 
$(D\de)_{a}$ is complex-linear. We have proved that if $\de$ has a real differential at a point $a \in A=A_1$, then $(D\de)_{a}$ is complex-linear. We conclude that $\de$ is holomorphic in $A$ by applying \cite[Lemma 2.4]{ks}. For $a, b \in A$, we define a map $f_{a,b}: \C \to \C$ by 
\[
f_{a,b}(\la)=\de(\la a+b)-\de(b).
\]
Since $\de$ is holomorphic in A, $f_{a,b}$ is entire. Moreover, we have for any $\la \in \C \setminus \{0\}$ 
\[
\frac{f_{a,b}(\la)}{\la}=\frac{\de(\la a+b)-\de(b)}{\la} \in \frac{\mathbb{T}\sigma(\la a)}{\la}=\frac{\la \mathbb{T} \sigma(a)}{\la}=\mathbb{T}\sigma(a),
\]
and
\[
\left|\frac{f_{a,b}(\la)}{\la}\right|\le \|a\|.
\]
By Liouville's  Theorem, there exists $M \in \C$ such that $f_{a,b}(\la)=\la M$ for all $\la \in \C$. As $M=f_{a,b}(1)=\de(a+b)-\de(b)$, we get
\[
\de(\la a+b)-\de(b)=\la\{\de(a+b)-\de(b)\}, \quad \la \in \C,
\]
and
\begin{equation}\label{(2)}
\de(\la a+b)=\la\{\de(a+b)-\de(b)\}+\de(b), \quad \la \in \C.
\end{equation}
Taking $b=0$ in (\ref{(2)}), we have
\begin{equation}\label{(3)}
\de(\la a)=\la \de(a), \quad \la \in \C,
\end{equation}
by the hypothesis (a). For  any $c, d \in A$, taking $a=\frac{1}{2}(c-d)$, $b=d$ and $\la =2$ in (\ref{(2)}), we get
\begin{equation*}
\begin{split}
\de(c)=\de(2a+b)&=2\{\de(a+b)-\de(b)\}+\de(b)\\
&=2\left \{\de \left (\frac{1}{2}(c+d)\right)-\de(d)\right\}+\de(d)\\
&=2\de\left(\frac{1}{2}(c+d)\right)-\de(d),
\end{split}
\end{equation*}
and 
\begin{equation}\label{(4)}
\de \left (\frac{1}{2}(c+d)\right)=\frac{1}{2}\de(c)+\frac{1}{2}\de(d).
\end{equation}
We conclude that $\de$ is complex-linear by (\ref{(3)}) and (\ref{(4)}). 

(ii) We consider the case $A$ is not separable. For any $a,b \in A$, we can restrict $\de$ to subalgebra $[a,b]$ of $A$ generated by $a$ and $b$. Since $[a,b]$ is separable, we conclude $\de|_{[a,b]}$ is complex-linear. As $a,b$ are chosen arbitrarily, $\de$ is complex-linear too. 

In addition, since $\de(a)=\de(a)-\de(0) \in \mathbb{T}\sigma(a)$, we apply \cite[Proposition 2.2]{lpww} to conclude that $\overline{\de(\1)}\de$ is multiplicative.

Secondly we assume that $A=A_{-1}$. We define the map $\overline{\de}: A \to \C$ by
\[
\overline{\de}(a)=\overline{\de(a)}, \quad a \in A.
\] 
In the case $A=A_{-1}$, $\de$ satisfies for any $a\in A$,
\[
\de(a+\la\cdot \1)-\de(a)=\overline{\la}\{\de(a+\1)-\de(a)\}, \quad \la \in \C.
\]
Thus we have
\[
\overline{\de}(a+\la\cdot \1)-\overline{\de}(a)=\la\{\overline{\de}(a+\1)-\overline{\de}(a)\}, \quad \la \in \C.
\]
Moreover, it is clear that $\overline{\de}(0)=\overline{\de(0)}=0$. Therefore, the map $\overline{\de}:A \to \C$ satisfies the conditions for $\de$ in the case of $A=A_1$. This in turn implies that $\overline{\de}$ is complex-linear and $\overline{\overline{\de}(\1)}\overline{\de}$ is multiplicative. Thus we conclude that $\de$ is conjugate linear and $\overline{\de(\1)}\de$ is multiplicative.
\end{proof}
%%%%%%%%%%%%%%%%%%%%%%%%%%%%%%%%%%%%%%%%%%%%%%%%%%%%%%%%%%%%%%%%%%%%%%%%%%%%%%%%%%%%%%%%%%%%%%%%%%%%%%%%%%%%%%%%%%%%%%%%%%%%%%%%%%%%%%%%%%%%%%%%%%%%%%%%%%%%%%%%%%%%%%%%%%%%%%%%%%%%%%%%%%%%%%%%%%%%%%%%%%%%%%%%%%%%%%%%%%%%%%%%%%%%%%%%%%%%%%%%%%%%%%%%%%%%%%%%%%%%%%%%%%%%%%%%%%%%%%%%%%%%%%%%%%%%%%%%%%%%%%%%%%%%%%%%%%%%%%%%%%%%%%%%%%%%%%%%%%%%%%%%%%%%%%%%%%%%%%%%%%%%%%%%%%%%%%%%%%%%%%%%%%%%%%%%%%%%%%%%%%%%%%%%%%%%%%%
\section{2-local maps in $\G$}\label{GWC}
%%%%%%%%%%%%%%%%%%%%%%%%%%%%%%%%%%%%%%%%%%%%%%%%%%%%%%%%%%%%%%%%%%%%%%%%%%%%%%%%%%%%%%%%%%%%%%%%%%%%%%%%%%%%%%%%%%%%%%%%%%%%%%%%%%%%%%%%%%%%%%%%%%%%%%%%%%%%%%%%%%%%%%%%%%%%%%%%%%%%%%%%%%%%%%%%%%%%%%%%%%%%%%%%%%%%%%%%%%%%%%%%%%%%%%%%%%%%%%%%%%%%%%%%%%%%%%%%%%%%%%%%%%%%%%%%%%%%%%%%%%%%%%%%%%%%%%%%%%%%%%%%%%%%%%%%%%%%%%%%%%%%%%%%%%%%%%%%%%%%%%%%%%%%%%%%%%%%%%%%%%%%%%%%%%%%%%%%%%%%%%%%%%%%%%%%%%%%%%%%%%%%%%%%%%%%%%%
In this section $B_j$ is  a unital semisimple commutative Banach algebra with maximal ideal space $M_j$ for $j=1,2$. The Gelfand transform $\hat{\cdot}:B_j \to \widehat{B_j} \subset C(M_j)$ is a continuous isomorphism. Identifying $B_j$ with $\widehat{B_j}$, we consider that $B_j$ is a  subalgebra of $C(M_j)$. 
We say that $f\in B_j$ is unimodular if $|f|=1$ on $M_j$. 
Since $M_j$ is a maximal ideal space and a unimodular element $f$ of $B_j$ has no zeros on $M_j$, $\bar{f}=1/f\in B_j$. 

An interesting generalization of the concept of 2-local maps, that is weak 2-locality, was introduced in \cite{cp,lpww}. We define a {\it pointwise} 2-local map.
%%%%%%%%%%%%%%%%%%%%%%%%%%%%%%%%%%%%
\begin{definition}
Let ${\mathcal S}\subset M(B_1,B_2)$. We say $T\in M(B_1,B_2)$ is a pointwise 2-local in ${\mathcal S}$ if for every trio $f,g \in B_1$ and $x\in M_2$ there exists $T_{f,g,x} \in {\mathcal S}$ such that
\[
\text{$\left(T(f)\right)(x)=\left(T_{f,g,x}(f)\right)(x)$ and $\left(T(g)\right)(x)=\left((T_{f,g,x}(g)\right)(x)$}.
\]
\end{definition}
%%%%%%%%%%%%%%%%%%%%%%%%%%%%%%%%%%%%%%%%%%%%%%%%%%%
Note that if a map $T$ is 2-local, then $T$ is weak 2-local. If $T$ is weak 2-local, then $T$ is pointwise 2-local. 
We say that $T\in M(B_1,B_2)$ is a pointwise 2-local isometry if $T$ is pointwise 2-local in $\isonly(B_1,B_2)$. Our interest is whether a pointwise 2-local in  $\isonly(B_1,B_2)$ is in fact surjective isometry from $B_1$ onto $B_2$ or not. Simple examples show that a pointwise 2-local isometry need not be a surjection or an isometry. We show three of them. 
\begin{itemize}
\item a map on $C[0,1]$\\ We denote the algebra of all complex-valued continuous functions on $[0,1]$ by $C[0,1]$. The supremum norm $\|\cdot\|_{\infty}$ makes it a Banach algebra. Let $\pi:[0,1]\to [0,1]$ be a continuous function such that $\pi(0)=0$, $\pi(1)=1$ and $0<\pi(x)<1$ for $x \in (0,1)$. Put $T(f)=f\circ\pi$, $f\in C[0,1]$. It is easy to see that $T$ is pointwise 2-local in $\isonly(C[0,1],C[0,1])$ while it is not surjective when $\pi$ is not a homeomorphism. 
\item a map on $C^1[0,1]$\\ We denote the algebra of all continuously differentiable functions defined on the closed unit interval $[0,1]$ by $C^1[0,1]$.
With the norm $\|f\|_{\Sigma}=\|f\|_{\infty}+\|f'\|_{\infty}$ for $f\in C^1[0,1]$, $C^1[0,1]$ is a unital semisimple commutative Banach algebra with maximal ideal space $[0,1]$. Let $T:C^1[0,1]\to C^1[0,1]$ stand $T(f)=\exp(i\cdot)f$, $f\in C^1[0,1]$. By a simple calculation we have $T$ is pointwise 2-local in $\isonly(C^1[0,1],C^1[0,1])$ while $T$ is not an isometry since $\|\1\|_{\Sigma}=1$ and $\|T(\1)\|_{\Sigma}=2$. 
\item a map on the disk algebra $A(\bar{\mathbb D})$. \\
The disk algebra $A(\bar{\mathbb D})$ on $\bar{\mathbb D}$ is the algebra of all continuous functions on $\bar{\mathbb D}$ which are analytic on the open unit disk ${\mathbb D}$. The disk algebra on $\bar{\mathbb D}$ is a uniform algebra on $\bar{\mathbb D}$. It is well known that the maximal ideal space of $A(\bar{\mathbb D})$ is $\bar{\mathbb D}$. Let $\pi_0(z)=z^2$, $z\in \bar{\mathbb D}$. Then the map $T:A(\bar{\mathbb D})\to A(\bar{\mathbb D})$ defined by $T(f)=f\circ\pi_0$, $f\in A(\bar{\mathbb D})$. Trivially $T$ is not surjective, hence $T\not\in \Iso(A(\bar{\mathbb D}), A(\bar{\mathbb D}))$. On the other hand, $T$ is pointwise 2-local in $\isonly(A(\bar{\mathbb D}),A(\bar{\mathbb D}))$. The reason is as follows. Let $f,g\in A(\bar{\mathbb D})$ and $x\in \bar{\mathbb D}$ be arbitrary. If $|x|=1$, then put $\varphi_x(z)=xz$. If $|x|<1$, then it is well known that there is a M\"obius transformation $\varphi_x$ such that $\varphi_x(x)=x^2$ since both of $x$ and $x^2$ is in ${\mathbb D}$. Put $T_{f,g,x}(h)=h\circ \varphi_x$, $h\in A(\bar{\mathbb D})$. We infer by a calculation that $(T(f))(x)=(T_{f,g,x}(f))(x)$ and $(T(g))(x)=(T_{f,g,x}(g))(x)$. Thus $T$ is pointwise 2-local in $\isonly(A(\bar{\mathbb D}), A(\bar{\mathbb D}))$. 
\end{itemize}
It is interesting to point out that a pointwise 2-local isometry is in fact a surjective isometry for some Banach algebra (see subsection \ref{Sd}). A simple example is a pointwise 2-local isometry on the annulus algebra. 
\begin{itemize}
\item
Let $0<r<1$ and $\Omega=\{z: r\le |z|\le 1\}$ be an annulus. Let $A(\Omega)$ be the algebra of all complex-valued continuous functions which is analytic on the interior of $\Omega$. It is well known that $A(\Omega)$ is a uniform algebra on $\Omega$ whose maximal ideal space is homeomorphic to $\Omega$. A pointwise 2-local map in $\isonly(A(\Omega), A(\Omega))$ is a surjective isometry. This can be proved by using the fact that a homeomorphism on $\Omega$ which is analytic on the interior is just a rotation.
\end{itemize}
 %We assume that $E_j$ is conjugate closed in the sense that $f\in E_j$ implies $\bar f\in E_j$, and that $\|f\|_j=\|\bar f\|_j$ for every $f\in E_j$. 

Recall that for an $\epsilon \in \{\pm 1\}$ and $f\in B_j$, $[f]^{\epsilon}=f$ if $\epsilon =1$ and $[f]^\epsilon=\bar f$ if $\epsilon=-1$. 
%Let $M(B_1,B_2)$ be the set of all maps from $B_1$ into $B_2$.  
%Note that we say a map is a surjective isometry if it is just a distance preserving surjection, we do not assume complex nor real linearity on it.
%�����͕K�v�Ȃ��̂ł́H
%Let $\Pi$ denotes a non-empty set of (not always all) homeomorphisms from $B_2$ onto $B_1$.
Let
\begin{multline*}
\text{$\G=\{T\in M(B_1,B_2);$ there exist a $\beta \in B_2$,}\\
\text{an $\alpha \in B_2 $ with $|\alpha|=1$ on $M_2$,}\\
\text{a continuous map $\pi:M_2\to M_1$,} \\
\text{ and a continuous map $\epsilon: M_2 \to \{\pm1\}$} \\
\text{such that $T(f)=\beta + \alpha [f\circ \pi]^\epsilon$ for every $f\in B_1$\}},
\end{multline*}
%\begin{equation*}
%\text{$\Gc=\{T\in \G;$ $\alpha$ is constant$\}$.}
%\end{equation*}

Applying Theorem \ref{gks} 
we show that a pointwise 2-local map in $\G$ is also in $\G$.
%we obtain the following theorem in terms of 2-locality.%%%%%%%%%%%%%%%%%%%%%%%%%%%%%%%%%%%%%%%%%%%%%%%%%%%%%%%%
\begin{theorem}\label{2}
%Let $\|\cdot\|_{j}$ be a norm on $B_j$ (not necessary the original norm nor a complete norm) for $j=1,2$. 
Suppose that $T \in M(B_1,B_2)$ is pointwise 2-local in $\G$. Then there exist a continuous map $\pi:M_2 \to M_1$ and a continuous map $\epsilon: M_2 \to \{\pm1\}$ such that
\begin{equation}\label{thm2}
T(f)=T(0)+(T(\1)-T(0))[f\circ \pi]^\epsilon, \quad f \in B_1,
\end{equation}
where $T(\1)-T(0)$ is a unimodular element in $B_2$. In particular, a pointwise 2-local map in $\G$ is an element in $\G$.
\end{theorem}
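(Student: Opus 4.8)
The plan is to localize the problem at each point of $M_2$ and invoke Theorem \ref{gks}. Fix $x \in M_2$ and define $\delta_x \colon B_1 \to \C$ by $\delta_x(f) = (T(f))(x) - (T(0))(x)$; clearly $\delta_x(0) = 0$. To verify the spectral condition, fix $f,g \in B_1$ and use pointwise 2-locality to pick $T_{f,g,x} = \beta' + \alpha'[\,\cdot\circ\pi']^{\epsilon'} \in \G$ agreeing with $T$ at $f$ and $g$ after evaluation at $x$. Then
\[
\delta_x(f) - \delta_x(g) = \alpha'(x)\bigl([f(\pi'(x))]^{\epsilon'(x)} - [g(\pi'(x))]^{\epsilon'(x)}\bigr),
\]
which is $\alpha'(x)z$ when $\epsilon'(x) = 1$ and $\alpha'(x)\bar z$ when $\epsilon'(x) = -1$, where $z := (f-g)(\pi'(x))$ lies in $\sigma(f-g)$ (the range of the Gelfand transform of $f-g$) and $|\alpha'(x)| = 1$. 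Since $\mathbb{T}\bar z = \mathbb{T}z$, in both cases $\delta_x(f) - \delta_x(g) \in \mathbb{T}\sigma(f-g)$, so $\delta_x$ meets the hypotheses of Theorem \ref{gks}.

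Theorem \ref{gks} then tells us that $\delta_x$ is complex-linear or conjugate linear and that $\overline{\delta_x(\1)}\delta_x$ is multiplicative. Also $\delta_x(\1) \in \mathbb{T}\sigma(\1) = \mathbb{T}$, so $|\delta_x(\1)| = 1$ and $\overline{\delta_x(\1)}\delta_x$ is nonzero. If $\delta_x$ is complex-linear, then $\overline{\delta_x(\1)}\delta_x$ is a character of $B_1$, i.e. evaluation at some $\pi(x) \in M_1$, and $\delta_x(f) = \delta_x(\1)\,f(\pi(x))$; set $\epsilon(x) = 1$. If $\delta_x$ is conjugate linear, the same reasoning applied to the conjugate of $\overline{\delta_x(\1)}\delta_x$ yields $\pi(x) \in M_1$ with $\delta_x(f) = \delta_x(\1)\,\overline{f(\pi(x))}$; set $\epsilon(x) = -1$. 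These alternatives are exclusive because $\delta_x(\1) \neq 0$, so $\epsilon(x)$ is unambiguous, and in either case
\[
(T(f))(x) = (T(0))(x) + \bigl((T(\1))(x) - (T(0))(x)\bigr)[f(\pi(x))]^{\epsilon(x)}, \qquad f \in B_1 .
\]
Since $|\delta_x(\1)| = 1$ for all $x$, the element $\alpha := T(\1) - T(0) \in B_2$ is unimodular, hence invertible with $\bar\alpha = 1/\alpha \in B_2$.

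Finally I would check continuity of $\pi$ and $\epsilon$. Taking $f = i\cdot\1$ above gives $\epsilon(x) = -i\,\overline{\alpha(x)}\bigl((T(i\cdot\1))(x) - (T(0))(x)\bigr)$, a continuous function of $x$, so $U_{\pm} := \epsilon^{-1}(\pm1)$ are clopen in $M_2$. For each fixed $f \in B_1$, the function $x \mapsto [f(\pi(x))]^{\epsilon(x)} = \overline{\alpha(x)}\bigl((T(f))(x) - (T(0))(x)\bigr)$ lies in $B_2$ and is therefore continuous; it equals $f(\pi(x))$ on $U_{+}$ and $\overline{f(\pi(x))}$ on $U_{-}$, so $x \mapsto f(\pi(x))$ is continuous on each clopen piece, hence on $M_2$. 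As the Gelfand topology of $M_1$ is the initial topology induced by $\{\hat f : f \in B_1\}$, this shows $\pi \colon M_2 \to M_1$ is continuous. Combining, \eqref{thm2} holds with $\pi$ and $\epsilon$ continuous and $T(\1) - T(0)$ unimodular, and taking $\beta = T(0)$, $\alpha = T(\1) - T(0)$ exhibits $T$ as an element of $\G$.

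I expect the real work to be concentrated in two places: ensuring that condition (b) of Theorem \ref{gks} survives the possibility that the witnessing maps $T_{f,g,x}$ are of conjugate type (handled by the identity $\mathbb{T}\bar z = \mathbb{T}z$), and promoting the pointwise-defined $\pi$ and $\epsilon$ to continuous maps, which hinges on the invertibility of $\alpha = T(\1) - T(0)$ in $B_2$ together with $\epsilon$ being locally constant.
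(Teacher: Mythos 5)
Your proposal is correct and follows essentially the same route as the paper: define pointwise functionals from $T$, verify the hypotheses of Theorem \ref{gks} via the witnesses $T_{f,g,x}$ (the conjugate case absorbed by $\mathbb{T}\bar z=\mathbb{T}z$), use Gelfand theory to produce $\pi(x)$ and $\epsilon(x)$, and get continuity of $\epsilon$ from the value at $i\cdot\1$ and of $\pi$ from the Gelfand (weak) topology. The only cosmetic difference is that the paper first normalizes, setting $T_1=\overline{T_0(\1)}T_0$ with $T_0=T-T(0)$ so that $\de_x(\1)=1$, whereas you carry the unimodular factor $\delta_x(\1)$ through directly.
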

%%%%%%%%%%%%%%%%%%%%%%%%%%%%%%%%%%%%%%%%%%%%%%%%%%%%%%%%%%%%
\noindent
{\it Proof}.\,\,% of Theorem \ref{2}}. \,\,
Put $T_0=T-T(0)$. 
We infer that $T_0(0)=0$. Since $T$ is pointwise 2-local in $\G$, it is obvious that 
 $T_0$ is also pointwise 2-local in $\G$. Let $x \in M_2$. There exists $\beta_{0,\1,x}, \alpha_{0,\1,x} \in B_2$ with $|\alpha_{0,\1,x}|=1$ on $M_2$, a continuous map $\pi_{0,\1,x}:M_2 \to M_1$ and a continuous map $\epsilon_{0,\1,x}:M_2 \to \{\pm1\}$ such that
\[
T_0(\1)(x)=\beta_{0,\1,x}(x)+\alpha_{0,\1,x}(x)[\1 \circ \pi_{0,\1,x}]^{\epsilon_{0,\1,x}(x)}(x)=\beta_{0,\1,x}(x)+\alpha_{0,\1,x}(x),
\]
and
\[
0=T_0(0)(x)=\beta_{0,\1,x}(x)+\alpha_{0,\1,x}(x)[0 \circ \pi_{0,\1,x}]^{\epsilon_{0,\1,x}(x)}(x)=\beta_{0,\1,x}(x).
\]
It follows that $T_0(\1)(x)=\alpha_{0,\1,x}(x)$. As $x\in M_2$ is arbitrary we have 
\begin{equation}\label{(5)}
|T_0(\1)(x)|=1, \quad x\in M_2.
\end{equation}
Hence $T_0(\1)$ has no zeros on $M_2$, so $\overline{T_0(\1)}=T_0(\1)^{-1}\in B_2$.
 We define $T_1 \in M(B_1,B_2)$ by
\begin{equation}\label{(6)}
T_1=\overline{T_0(\1)}T_0.
\end{equation}
We see that
\begin{equation}\label{(7)}
T_1(0)=\overline{T_0(\1)}T_0(0)=0, \quad  T_1(\1)=\overline{T_0(\1)}T_0(\1)=1
\end{equation}
by (\ref{(5)}). To proceed the proof of Theorem \ref{2}, we need some claims.

{\it Claim} 1. {\it
%\begin{lemma}\label{2.1}
There exists a map $\pi:M_2 \to M_1$ and a map $\epsilon: M_2 \to \{\pm1\}$ such that
\[
T_1(f)=[f\circ \pi]^\epsilon, \quad f \in B_1.
\]
}
%\end{lemma}
\begin{proof}
By (\ref{(5)}), we infer that $T_1$ is pointwise 2-local in $\G$. Fix $x \in M_2$. We define $\de_x:B_1 \to \C$ by
\begin{equation*}
\de_{x}(f)=(T_1(f))(x), \quad f \in B_1. 
\end{equation*}
As $T_1$ is pointwise 2-local in $\G$, for any $f,g \in B_1$, there exists $T_{f,g,x} \in \G$ such that 
\begin{equation*}
\begin{split}
\de_{x}(f)&=(T_1(f))(x)\\
&=T_{f,g,x}(f)(x)=\beta_{f,g,x}(x)+\alpha_{f,g,x}(x)[f \circ \pi_{f,g,x}]^{\epsilon_{f,g,x}(x)}(x)
\end{split}
\end{equation*}
and
\begin{equation*}
\begin{split}
\de_{x}(g)&=(T_1(g))(x)\\
&=T_{f,g,x}(g)(x)=\beta_{f,g,x}(x)+\alpha_{f,g,x}(x)[g \circ \pi_{f,g,x}]^{\epsilon_{f,g,x}(x)}(x).
\end{split}
\end{equation*}
We infer that
\[
\de_x(f)-\de_{x}(g)=\alpha_{f,g,x}(x)[(f-g) \circ \pi_{f,g,x}]^{\epsilon_{f,g,x}(x)}(x).
\]
If $x \in \epsilon^{-1}_{f,g,x}(1)$, we have
\[
[(f-g) \circ \pi_{f,g,x}]^{\epsilon_{f,g,x}(x)}(x)=(f-g) (\pi_{f,g,x}(x)) \in \sigma(f-g).
\]
If $x \in \epsilon^{-1}_{f,g,x}(-1)$, we have
\[
[(f-g) \circ \pi_{f,g,x}]^{\epsilon_{f,g,x}(x)}(x)=\overline{(f-g) (\pi_{f,g,x}(x))} \in \mathbb{T} \sigma(f-g).
\]
Therefore we get
\[
\de_{x}(f)-\de_{x}(g) \in \mathbb{T} \sigma(f-g), \quad f,g \in B_1.
\]
By (\ref{(7)}), we have $\de_{x}(0)=T_1(0)(x)=0$. Applying Theorem \ref{gks}, we obtain $\de_{x}$ is complex  linear or conjugate linear and $\overline{\de_{x}(\1)}\de_{x}$ is multiplicative. As $\overline{\de_{x}(\1)}=\overline{T_1(\1)(x)}=1$ by (\ref{(7)}), we conclude that $\de_x$ is multiplicative.  In addition $\de_x(\1)=1$ implies that $\de_x \neq 0$. 
Therefore for any $x \in M_2$, one of the following (i) and (ii) occurs:
\def\theenumi{\roman{enumi}}
\begin{enumerate}
\item $\de_x$ is a non-zero multiplicative complex-linear functional,
\item $\de_x$ is a non-zero multiplicative conjugate linear functional.
\end{enumerate}
In the case (i), by Gelfand theory, there exists $\pi(x) \in M_1$ such that 
\[
\de_{x}(f)=f(\pi(x)), \quad f \in B_1.
\]
In the case (ii), $\overline{\de_{x}}$ is non-zero multiplicative complex-linear functional. Thus there exists $\pi(x) \in M_1$ such that 
\[
\overline{\de_{x}}(f)=f(\pi(x)), \quad f \in B_1,
\]
hence
\[
\de_{x}(f)=\overline{f(\pi(x))}, \quad f \in B_1.
\]
Recall that $\de_{x}(f)=(T_1(f))(x)$, we have 
\[
  T_1(f)(x) = \begin{cases}
    f \circ \pi(x), & (\de_x \text{ is complex-linear}) \\
    \overline{f \circ \pi} (x),& (\de_x \text{ is conjugate linear}).
  \end{cases}
\]
We define a map $\epsilon: M_2 \to \{\pm 1\} $ by
\begin{equation}\label{epsilon}
  \epsilon(x) = \begin{cases}
    1, & (\de_x \text{ is complex-linear}) \\
    -1, & (\de_x \text{ is conjugate linear}).
  \end{cases}
\end{equation}
Then we conclude that 
\[
T_1(f)(x)=[f\circ \pi]^{\epsilon(x)}(x), \quad f \in B_1, \  x \in M_2.
\]
\end{proof}
%We see that Claim 1 holds.

Let
\[
K_1=\{x \in M_2; \de_x \text{ is complex-linear} \}
\]
and
\[
K_{-1}=\{x \in M_2; \de_x \text{ is conjugate linear} \}.
\]
Rewriting \eqref{epsilon}, we have
\[
  \epsilon(x) = \begin{cases}
    1, & (x \in K_1) \\
    -1, & (x \in K_{-1}).
  \end{cases}
\]

{\it Claim} 2.
%\begin{lemma}\label{2.2}
{\it 
We have $K_1=\{x \in M_2; \de_{x}(i)=i\}$ and $K_{-1}=\{x \in M_2; \de_{x}(i)=-i\}$. In addition $M_2=K_1 \cup K_{-1}$, $K_1 \cap K_{-1} = \emptyset$ and $K_1$ and $K_{-1}$ are closed subset of $M_2$. 
}
%\end{lemma}
\begin{proof}
Since for any $x \in M_2$, $\de_x$ is complex-linear or conjugate linear, it is clear that $M_2=K_1 \cup K_{-1}$. By the definition of $K_1$ and $\de_x(\1)=1$, if $x \in K_1$, then $x \in \{x \in M_2; \de_x(i)=i\}$. Suppose that $x \in \{x \in M_2; \de_x(i)=i\}$. Then $\de_x(i)=i\de_x(\1)$. This implies that $x \in K_1$. We conclude that $K_1=\{x \in M_2; \de_x(i)=i\}$. We can also prove that $K_{-1}=\{x \in M_2; \de_{x}(i)=-i\}$ in the similar argument. Therefore it is easy to see that $K_1 \cap K_{-1} = \emptyset$. Let $\{\x_{\alpha}\} \subset K_1$ be a net with $x_{\alpha} \to x_0 \in M_2$. 
We get
\[
i=\de_{x_{\alpha}}(i)=(T_1(i))(x_{\alpha}) \to (T_1(i))(x_{0})=\de_{x_0}(i).
\]
This implies that $\de_{x_0}(i)=i$ and $x_0 \in K_1$. We have $K_1$ is closed in $M_2$. We also get $K_{-1}$ is closed in the same way.
\end{proof}
Claim 2 shows that $\epsilon: M_2 \to \{\pm1\}$ is continuous.

\vspace{3mm}
{\it Claim} 3.
%\begin{lemma}\label{2.3}
{\it 
We have $\pi: M_2 \to M_1$ is continuous.
}
%\end{lemma}
\begin{proof}
Let $\{\x_{\alpha}\} \subset M_2$ be a net with $x_{\alpha} \to x_0 \in M_2$. By Claim 2, $K_1$ and $K_{-1}$ are closed and $K_1  \cap K_{-1} = \emptyset$. Thus there is no loss of generality to assume that
\def\theenumi{\roman{enumi}}
\begin{enumerate}
\item  $\{x_{\alpha}\} \subset K_1$ and $x_0 \in K_1$
\item  $\{x_{\alpha}\} \subset K_{-1}$ and $x_0 \in K_{-1}$.
\end{enumerate}
First, we consider the case (i). Then we have 
\[
T_1(f)(x_{\alpha}) \to T_1(f)(x_0), \quad f \in B_1,
\]
hence
\[
(f \circ \pi)(x_{\alpha}) \to (f\circ \pi)(x_0), \quad f \in B_1.
\]
This implies that $\pi(x_\alpha) \to \pi(x_0)$ with the Gelfand topology. For the case (ii), we have 
\[
T_1(f)(x_{\alpha}) \to T_1(f)(x_0), \quad f \in B_1,
\]
and 
\[
\overline{(f \circ \pi)(x_{\alpha})} \to \overline{(f\circ \pi)(x_0)}, \quad f \in B_1.
\]
Thus we get $\pi(x_\alpha) \to \pi(x_0)$ with the Gelfand topology. We conclude that $\pi$ is continuous.
\end{proof}
%%%%%%%%%%%%%%%%%%%%%%%%%%%%%%%%%%%%%%%%%%%%%%%%%%%%
\begin{proof}[Continuation of Proof of  Theorem \ref{2}]
By (\ref{(6)}), we get $T_0=T_0(\1)T_1$. As $T_0=T-T(0)$ and Claim 1, we have
\begin{equation*}
\begin{split}
T(f)&=T_0(f)+T(0)\\
&=T_0(\1)T_1(f)+T(0)\\
&=T_0(\1)[f\circ \pi]^{\epsilon}+T(0), \quad f \in B_1.
\end{split}
\end{equation*}
Putting $f=\1$, we have $T_0(\1)=T(\1)-T(0)$ and 
\[
T(f)=(T(\1)-T(0))[f \circ \pi]^{\epsilon}+T(0).
\]
In  addition, by (\ref{(5)}), we have $|T_0(\1)|=1$. We obtain that $T_0(\1)=T(\1)-T(0)$ is a unimodular element in $B_2$.
\end{proof}
%%%%%%%%%%%%%%%%%%%%%%%%%%%%%%%%%%%%%%%%%%%%
\begin{remark}
Even though a map $T \in M(B_1,B_2)$ is a 2-local map in $\G$, it is not always the case that $\pi: M_2 \to M_1$ is a homeomorphism. In fact, the map $T_0$ in \cite[Theorem 2.3]{hmto} is a 2-local automorphism, hence 2-local  in $\isonly_{\mathbb{C}}(C(\bar{\mathcal{K}}),C(\bar{\mathcal{K}}))$. On the other hand, the corresponding continuous map is not injective, hence it is not a homeomorphism.
% (cf.\cite{}). But with an additional hypothesis, $\pi$ is always a homeomorphism.
\end{remark}

%If a pointwise 2-local map in $\G$ is injective, then $\pi(M_2)$ is a uniqueness set for $B_1$
%%%%%%%%%%%%%%%%%%%%%%%%%%%%%%%%%%%%%%%%%%%%%%%%%%%%%%%%%%%
\begin{cor}\label{homeo}
Suppose that $T \in M(B_1,B_2)$ is a pointwise 2-local in $\G$ and $T$ is injective. Then $\pi(M_2)$ is a uniqueness set for $B_1$, i.e. if $g \in B_1$ and $g=0$ on $\pi(M_2)$, then $g=0$.
\end{cor}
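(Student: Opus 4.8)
The plan is to argue by contraposition. Suppose, for the sake of contradiction, that there exists a nonzero $g \in B_1$ with $g = 0$ on $\pi(M_2)$. I want to produce two distinct functions in $B_1$ on which $T$ agrees, contradicting injectivity. The natural candidates are $\1$ and $\1 + g$, or more flexibly $h$ and $h + g$ for a suitable $h \in B_1$. The point is that by Theorem \ref{2} we have the explicit formula
\[
T(f) = T(0) + (T(\1) - T(0))\,[f \circ \pi]^{\epsilon}, \qquad f \in B_1,
\]
so $T(f)$ depends on $f$ only through the values of $f$ on $\pi(M_2)$ (pointwise on $M_2$, via $x \mapsto [f \circ \pi]^{\epsilon(x)}(x) = [f(\pi(x))]^{\epsilon(x)}$).

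The key observation is then immediate: for any $h \in B_1$, since $g(\pi(x)) = 0$ for every $x \in M_2$, we have $(h+g)(\pi(x)) = h(\pi(x))$ for every $x \in M_2$, hence $[(h+g)\circ\pi]^{\epsilon}(x) = [h\circ\pi]^{\epsilon}(x)$ for all $x \in M_2$, and therefore $T(h+g) = T(h)$ as elements of $B_2 \subset C(M_2)$. Taking, say, $h = \1$ gives $T(\1 + g) = T(\1)$. Since $g \neq 0$ as an element of $B_1$ and $B_1$ is semisimple (so the Gelfand transform is injective, and in any case $\1 + g \neq \1$ in $B_1$), the functions $\1$ and $\1 + g$ are distinct, contradicting the injectivity of $T$. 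Hence no such $g$ exists, i.e. $\pi(M_2)$ is a uniqueness set for $B_1$.

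The only point requiring a word of care is the transition from "$T(h+g)$ and $T(h)$ agree pointwise on $M_2$ as continuous functions on $M_2$" to "$T(h+g) = T(h)$ as elements of $B_1$ (resp. $B_2$)": this is exactly the identification of $B_2$ with its Gelfand transform $\widehat{B_2} \subset C(M_2)$, which is injective by semisimplicity of $B_2$, as set up at the start of Section \ref{GWC}. There is no real obstacle here; the entire content of the corollary is the substitution $g|_{\pi(M_2)} = 0$ into the representation formula of Theorem \ref{2}. I would write this up in three or four lines.
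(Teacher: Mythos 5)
Your proposal is correct and follows essentially the same route as the paper: substitute a function vanishing on $\pi(M_2)$ into the representation formula of Theorem \ref{2} and invoke injectivity of $T$. The paper simply takes $h=0$, obtaining $T(g)=T(0)$ directly and concluding $g=0$, whereas you compare $T(\1+g)$ with $T(\1)$; this is an immaterial difference.
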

%%%%%%%%%%%%%%%%%%%%%%%%%%%%%%%%%%%%%%%%%%%%%%%%%%%%%%%%%%
\begin{proof}
Suppose that $g\in B_1$ and  $g=0$ on $\pi(M_2)$. Substituting $g$ in \eqref{thm2}, we get
\[
T(g)=T(0)+(T(\1)-T(0))[g\circ \pi]^\epsilon=T(0)+(T(\1)-T(0))[0]^\epsilon=T(0).
\] 
Since $T$ is injective, we have that $g=0$. Hence $\pi(M_2)$ is a uniqueness set for $B_1$. 
\end{proof}
%%%%%%%%%%%%%%%%%%%%%%%%%%%%%%%%%%%%%%%%%%%%%%%%%%%%%%%%%%%%%%%%%%%%%%%%%%%%%%%%%%%%%%%%%%%%%%%%%%%%%%%%%%%%%%%%%%%%%%%%%%%%%%%%%%%%%%%%%%%%%%%%%%%%%%%%%%%%%%%%%%%%%%%%%%%%%%%%%%%%%%%%%%%%%%%%%%%%%%%%%%%%%%%%%%%%%%%%%%%%%%%%%%%%%%%%%%%
%Note that a 2-local map $T$ in $\isonly(B_1,B_2)$ is automatically an isometry by the definition.   
 
Let
\begin{multline*}
\text{$\operatorname{WC}_{\mathbb C}=\{T\in M(B_1,B_2);$ there exists}\\
\text{an $\alpha \in B_2 $ with $|\alpha|=1$ on $M_2$,}\\
\text{and a continuous map $\pi:M_2\to M_1$} \\
\text{such that $T(f)=\alpha f\circ \pi$ for every $f\in B_1$}
\}.
\end{multline*}
Then $\operatorname{WC}_{\mathbb C}$ is a set of weighted composition operators.  We see that a pointwise 2-local weighted composition operator is a weighted composition operator.
%%%%%%%%%%%%%%%%%%%%%%%%%%%%%%%%%%%%%%%%%%%%%%%%%%%%%%%%%%%%%%%%%%%%%%%%%%
%%%%%%%%%%%%%%%%%%%%%%%%%%%%%%%%%%%%%%%%%%%%%%%%%%%%%%%%%%%%%%%%%%%%%%%%%%
\begin{cor}
Suppose that $T\in M(B_1,B_2)$ is pointwise 2-local in $\operatorname{WC}_{\mathbb C}$. Then $T\in \operatorname{WC}_{\mathbb C}$.
\end{cor}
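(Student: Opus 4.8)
The plan is to read off the corollary from Theorem \ref{2} after noting that $\operatorname{WC}_{\mathbb C}$ is precisely the subfamily of $\G$ obtained by imposing $\beta=0$ and $\epsilon\equiv 1$, and then showing that a pointwise 2-local map in $\operatorname{WC}_{\mathbb C}$ automatically inherits these two normalizations. First I would observe that $\operatorname{WC}_{\mathbb C}\subset\G$ (take $\beta=0$ and $\epsilon\equiv1$), so that $T$ is in particular pointwise 2-local in $\G$; Theorem \ref{2} then provides continuous maps $\pi\colon M_2\to M_1$ and $\epsilon\colon M_2\to\{\pm1\}$ with $T(f)=T(0)+(T(\1)-T(0))[f\circ\pi]^{\epsilon}$ and $T(\1)-T(0)$ unimodular.

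Next I would show $T(0)=0$. Fix $x\in M_2$ and apply pointwise 2-locality to the trio $f=g=0$: there is some $T_{0,0,x}(h)=\alpha\,h\circ\pi'$ in $\operatorname{WC}_{\mathbb C}$ agreeing with $T$ at $0$ at the point $x$, whence $(T(0))(x)=\alpha(x)\cdot 0(\pi'(x))=0$. Since $x$ is arbitrary, $T(0)=0$; consequently $T_0=T$, the element $T(\1)$ is unimodular, $T_1=\overline{T(\1)}\,T$, and $\de_x(f)=\overline{T(\1)(x)}\,(T(f))(x)$ in the notation of Claim 1.

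Then I would show $\epsilon\equiv1$, which by Claim 2 is the same as $\de_x(i\1)=i$ for every $x\in M_2$. Fix $x$ and apply pointwise 2-locality to the trio $f=\1$, $g=i\1$: there is $T_{\1,i\1,x}(h)=\alpha\,h\circ\pi'$ in $\operatorname{WC}_{\mathbb C}$, and evaluating at $x$ gives $(T(\1))(x)=\alpha(x)$ and $(T(i\1))(x)=\alpha(x)\,i=i\,(T(\1))(x)$. Since $|(T(\1))(x)|=1$, this yields $\de_x(i\1)=\overline{(T(\1))(x)}\,(T(i\1))(x)=i\,|(T(\1))(x)|^2=i$, so $x\in K_1$. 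As $x$ is arbitrary, $\epsilon\equiv1$. Substituting $T(0)=0$ and $\epsilon\equiv1$ into \eqref{thm2} gives $T(f)=T(\1)(f\circ\pi)$ with $T(\1)$ unimodular, i.e. $T\in\operatorname{WC}_{\mathbb C}$.

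I do not anticipate a serious obstacle: the content is entirely bookkeeping on top of Theorem \ref{2}. The only points requiring a little care are choosing the right test trios — the pair $\{0,0\}$ to kill the additive term $\beta$, and the pair $\{\1,i\1\}$ to force the pointwise functionals $\de_x$ to be complex-linear rather than conjugate linear — and remembering that the weights $\alpha(x)$ of the local operators have modulus one on $M_2$.
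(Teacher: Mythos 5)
Your proposal is correct and follows essentially the same route as the paper: reduce to Theorem \ref{2} via $\operatorname{WC}_{\mathbb C}\subset\G$, then exploit the complex-linearity of weighted composition operators through pointwise 2-locality to force $T(0)=0$ and $\epsilon\equiv 1$; your explicit test trios $(0,0,x)$ and $(\1,i\1,x)$ simply spell out the ``simple calculation'' the paper invokes. The only cosmetic point is that you appeal to Claim 2 and the functionals $\de_x$, which are internal to the proof of Theorem \ref{2}; the same conclusion $\epsilon(x)=1$ follows directly from the displayed formula of the theorem, since $T(i\1)(x)=i\,T(\1)(x)$ and $T(\1)(x)\neq 0$ rule out $[i]^{\epsilon(x)}=-i$.
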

%%%%%%%%%%%%%%%%%%%%%%%%%%%%%%%%%%%%%%%%%%%%%%%%%%%%%%%%%%%%%%%%%%%%%%%%%%
%%%%%%%%%%%%%%%%%%%%%%%%%%%%%%%%%%%%%%%%%%%%%%%%%%%%%%%%%%%%%%%%%%%%%%%%%%%
\begin{proof}
Let $T\in M(B_1,B_2)$ be pointwise 2-local in $\operatorname{WC}_{\mathbb C}$. Since $\operatorname{WC}_{\mathbb C}\subset \G$, we see by Theorem \ref{2} that there exist a continuous map $\pi:M_2 \to M_1$ and a continuous map $\epsilon: M_2 \to \{\pm1\}$ such that
\begin{equation}\label{thm22}
T(f)=T(0)+(T(\1)-T(0))[f\circ \pi]^\epsilon, \quad f \in B_1,
\end{equation}
where $T(\1)-T(0)$ is a unimodular element in $B_2$.
Since any map in $\operatorname{WC}_{\mathbb C}$ is complex-linear, we infer by a simple calculation that $T(0)=0$ and $T$ is homogeneous with respect to complex scalar. We see by \eqref{thm22} that
\[
T(f)=T(\1)f\circ \pi, \quad f\in B_1,
\]
where $T(\1)$ is a unimodular function. Thus $T\in \operatorname{WC}_{\mathbb C}$.
\end{proof}

%%%%%%%%%%%%%%%%%%%%%%%%%%%%%%%%%%%%%%%%%%%%%%%%%%%%%%%%%%%%%%%%%%%%%%%%%%%%%%%%%%%%%%%%%%%%%%%%%%%%%%%%%%%%%%%%%%%%%%%%%%%%%%%%%%%%%%%%%%%%%%%%%%%%%%%%%%%%%%%%%%%%%%%%%%%%%%%%%%%%%%%%%%%%%%%%%%%%%%%%%%%%%%%%%%%%%%%%%%%%%%%%%%%%%%%
\section{Applications}% (Concrete examples of 2-local reflexive)}
%%%%%%%%%%%%%%%%%%%%%%%%%%%%%%%%%%%%%%%%%%%%%%%%%%%%%%%%%%%%%%%%%%%%%%%%%%%%%%%%%%%%%%%%%%%%%%%%%%%%%%%%%%%%%%%%%%%%%%%%%%%%%%%%%%%%%%%%%%%%%%%%%%%%%%%%%%%%%%%%%%%%%%%%%%%%%%%%%%%%%%%%%%%%%%%%%%%%%%%%%%%%%%%%%%%%%%%%%%%%%%%%%%%%%%%%%%%%%
In this section we study 2-local isometries on several function spaces by applying Theorem \ref{2}.% and \ref{2const}.
%%%%%%%%%%%%%%%%%%%%%%%%%%%%%%%%%%%%%%%%%%%%%%%%%%%%%%%%%%%%%%
\subsection{Uniform algebras}
%%%%%%%%%%%%%%%%%%%%%%%%%%%%%%%%%%%%%%%%%%%%%%%%%%%%%%%%%%%%%%%%%%%%%%%%%%%%%%%%%%%%%%%%%%%%%%%%%%%%%%%%%%%%%%%%%%%%%%%%%%%%%%%%%%%%%%%%%%%%%%%%%%%%%%%%%%%%%%%%%%%%%%%%%%%%%%%%%%%%%%%%%%%%%%%%%%%%%%%%%%%%%%%%%%%%%%%%%%%%%%%%%%%%%%%%%%%%%
Let $X$ be a compact Hausdorff space. The algebra of all complex-valued continuous functions on $X$ is denoted by $C(X)$, which is a Banach algebra with respect to the supremum norm $\|\cdot\|_{\infty}$ on $X$. 
We say that $A$ is a uniform algebra on $X$ if $A$ is a uniformly closed  subalgebra of $C(X)$ which contains constant functions and separates the points of $X$. As the Gelfand transformation on a uniform algebra is an isometric isomorphism, a uniform algebra is isometrically isomorphic to its Gelfand transform. We may suppose that $X$ is a subset of the maximal ideal space $M_A$, and $A$ is a uniform algebra on $M_A$. 
The Banach algebra $C(X)$ is a uniform algebra on $X$ whose maximal ideal space is $X$. 
By Theorem 2.1 and Corollary 3.4 in \cite{hm} we have the following. Note that we denote the maximal ideal space of a uniform algebra $A_j$ by $M_j$ for $j=1,2$.
%%%%%%%%%%%%%%%%%%%%%%%%%%%%%%%%%%%%%%%%%%%%%%%%%%%%%%%%%%%%%%
%%%%%%%%%%%%%%%%%%%%%%%%%%%%%%%%%%%%%%%%%%%%%%%%%%%%%%%%%%%%%%%
\begin{theorem}\label{uniform}
Let $A_j$ be a uniform algebra on a compact Hausdorff space $X_j$ for $j=1,2$. Suppose that $U:A_1\to A_2$ is a surjective isometry from $A_1$ onto $A_2$. Then there exists a homeomorphism $\pi:M_2\to M_1$, an $\alpha\in A_2$ with $|\alpha|=1$ on $M_2$, and a continuous map $\epsilon:M_2\to \{\pm 1\}$ such that
\begin{equation}\label{abc}
U(f)=U(0)+\alpha[f\circ\pi]^{\epsilon}, \quad f\in A_1.
\end{equation}
\end{theorem}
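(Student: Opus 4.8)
The plan is to reduce the statement to the Mazur--Ulam theorem together with the known description of surjective real-linear isometries between uniform algebras. First I would apply Mazur--Ulam: a surjective isometry between real Banach spaces is affine, so $V:=U-U(0)$ is a surjective real-linear isometry of $A_1$ onto $A_2$. It then suffices to produce a homeomorphism $\pi:M_2\to M_1$, an $\alpha\in A_2$ with $|\alpha|=1$ on $M_2$, and a continuous $\epsilon:M_2\to\{\pm1\}$ such that $V(f)=\alpha\,[f\circ\pi]^{\epsilon}$ for all $f\in A_1$; then \eqref{abc} follows by adding back $U(0)$, with $\alpha=U(\1)-U(0)$.

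Next I would analyze $V$ on the Choquet boundary. Regarding each $A_j$ as a real Banach space, the map $\phi\mapsto\Real\phi$ identifies the real dual of $A_j$ isometrically with the space of bounded complex-linear functionals, so the extreme points of the closed unit ball of this real dual are precisely the functionals $f\mapsto\Real(c\,f(y))$ with $y\in\ch(A_j)$ and $|c|=1$ (the Arens--Kelley description of the extreme points via the Choquet boundary). Since $V$ is a surjective real-linear isometry, $V^{*}$ is a real-linear isometric isomorphism of the real duals and hence carries extreme points of the unit ball onto extreme points. Tracking this correspondence yields, for each $x\in\ch(A_2)$, a point $\pi(x)\in\ch(A_1)$, a sign $\epsilon(x)\in\{\pm1\}$ and a scalar $\alpha(x)\in\mathbb{T}$ with $V(f)(x)=\alpha(x)[f(\pi(x))]^{\epsilon(x)}$, where $x\mapsto\pi(x)$ is a bijection of $\ch(A_2)$ onto $\ch(A_1)$ whose inverse comes from running the same argument for $V^{-1}$; taking $f=\1$ identifies $\alpha$ on $\ch(A_2)$ with the restriction of $V(\1)\in A_2$.

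Then I would propagate this to the whole maximal ideal space, imitating the arguments of Theorem \ref{2}. Since $[\,\cdot\,]^{\epsilon}$ is multiplicative, the boundary formula gives $V(f)V(g)=V(\1)V(fg)$ on $\ch(A_2)$; both sides belong to $A_2$, so this identity holds on all of $M_2$ by the maximum modulus principle, as $\ch(A_2)$ is a boundary for $A_2$. If $V(\1)$ vanished at some $x\in M_2$, then this identity together with surjectivity (choosing $f_0$ with $V(f_0)=\1$) would force $V(f)(x)=0$ for every $f$, contradicting $V(f_0)(x)=1$; hence $\alpha:=V(\1)$ is invertible in $A_2$, and since $|\alpha|=1$ on $\ch(A_2)$ we get $\|\alpha\|=\|\alpha^{-1}\|=1$, so $|\alpha|\equiv1$ on $M_2$. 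Now for each $x\in M_2$ the functional $\psi_x:=\alpha(x)^{-1}V(\,\cdot\,)(x)$ is real-linear, bounded, unital and, by the identity above, multiplicative; hence $\lambda\mapsto\psi_x(\lambda\1)$ is a continuous unital endomorphism of the real algebra $\mathbb{C}$, so it is the identity or the conjugation, and therefore $\psi_x$ is complex-linear or conjugate linear and equals $\delta_{\pi(x)}$ or $\overline{\delta_{\pi(x)}}$ for a unique $\pi(x)\in M_1$. This extends $\pi$ and $\epsilon$ to $M_2$ with $V(f)=\alpha[f\circ\pi]^{\epsilon}$; continuity of $\epsilon$ follows since $\epsilon(x)=1$ exactly when $V(i\1)(x)=i\,V(\1)(x)$, and continuity of $\pi$ follows from continuity of $f\circ\pi$ (resp.\ $\overline{f\circ\pi}$) on the clopen pieces $\epsilon^{-1}(1)$ and $\epsilon^{-1}(-1)$, exactly as in Claims 2 and 3; applying the same construction to $V^{-1}$ shows $\pi$ is a continuous bijection between compact Hausdorff spaces, hence a homeomorphism. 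Reinstating the translation gives $U(f)=U(0)+\alpha[f\circ\pi]^{\epsilon}$, which is \eqref{abc}.

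The routine parts are the Mazur--Ulam reduction and the maximum-modulus/Gelfand bookkeeping of the third step, which closely parallels the proof of Theorem \ref{2}. The real obstacle is the second step: identifying the extreme points of the real dual unit ball of a uniform algebra and extracting from the permutation induced by $V^{*}$ a \emph{single} point $\pi(x)$, a \emph{single} unimodular scalar $\alpha(x)$ and a \emph{single} sign $\epsilon(x)$, ruling out ``mixed'' behavior. This is exactly the content packaged in \cite[Theorem~2.1 and Corollary~3.4]{hm}, which is why we cite that result rather than reprove it.
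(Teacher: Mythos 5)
Your proposal is correct, but it ultimately rests on exactly the same pillar as the paper: the paper proves this theorem simply by invoking \cite[Theorem~2.1 and Corollary~3.4]{hm}, and you likewise defer the one genuinely hard step (extracting a single point $\pi(x)$, scalar $\alpha(x)$ and sign $\epsilon(x)$ from the action of $V^{*}$ on extreme points) to that same citation. The Mazur--Ulam reduction and the Choquet-boundary-to-$M_2$ bookkeeping you sketch are sound, but they reconstruct material already packaged in the cited results, so this is essentially the paper's approach with added detail rather than a different route.
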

%%%%%%%%%%%%%%%%%%%%%%%%%%%%%%%%%%%%%%%%%%%%%%%%%%%%%%%%%%%%%
%%%%%%%%%%%%%%%%%%%%%%%%%%%%%%%%%%%%%%%%%%%%%%%%%%%%%%%%%%%%%%%%%%%%%%%
%If a map $U:A_1\to A_2$ is given by the equation \eqref{abc}, then $U$ is a surjective isometry.
%Although the converse of Theorem \ref{uniform} does not hold in general. 
%Note that $X_1$ and $X_2$ needs not to be homeomorphic even if $U$ is a surjective isometry from a uniform algebra $A_1$ on $X_1$ onto a uniform algebra $A_2$ on $X_2$. 
%On the other hand, it holds for $A_j=C(X_j)$; 
If $A_j=C(X_j)$, the map $U$ defined by \eqref{abc} is a surjective isometry from $C(X_1)$ onto $C(X_2)$. 

By Theorem \ref{uniform} we see that 
\[
\isonly(A_1,A_2)\subset \G
\]
for uniform algebras $A_1$ and $A_2$. 
%%%%%%%%%%%%%%%%%%%%%%%%%%%%%%%%%%%%%%%%%%%%%%%%%%%%%%%%%%
A direct consequence of Theorem \ref{2} we have Corollary \ref{uniform2}, which is a generalization of Theorem 3.10 of \cite{lpww}.
%%%%%%%%%%%%%%%%%%%%%%%%%%%%%%%%%%%%%%%%%%%%%%%%%%%%%%%%%%
\begin{cor}\label{uniform2}
Let $A_j$ be a uniform algebra on a compact Hausdorff space $X_j$ for $j=1,2$. Suppose that $T\in M(A_1,A_2)$ is pointwise 2-local in $\isonly(A_1,A_2)$. 
Then there exist a continuous map $\pi:M_2 \to M_1$ and a continuous map $\epsilon: M_2 \to \{\pm1\}$ such that
\[
T(f)=T(0)+(T(\1)-T(0))[f\circ \pi]^\epsilon, \quad f \in A_1,
\]
where $T(\1)-T(0)$ is a unimodular function.
\end{cor}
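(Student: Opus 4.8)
The plan is to recognize Corollary \ref{uniform2} as an immediate specialization of Theorem \ref{2} once we observe that the class $\isonly(A_1,A_2)$ of surjective isometries between uniform algebras is contained in $\G$. The inclusion $\isonly(A_1,A_2)\subset\G$ is exactly what Theorem \ref{uniform} gives: every surjective isometry $U:A_1\to A_2$ has the form $U(f)=U(0)+\alpha[f\circ\pi]^\epsilon$ with $\alpha\in A_2$ unimodular on $M_2$, $\pi:M_2\to M_1$ a homeomorphism (in particular continuous), and $\epsilon:M_2\to\{\pm1\}$ continuous; setting $\beta=U(0)$ this is precisely membership in $\G$. Here one should be a little careful that $\G$ was defined for $B_1,B_2$ unital semisimple commutative Banach algebras identified with their Gelfand transforms, and a uniform algebra $A_j$ on $X_j$ is exactly such an algebra with maximal ideal space $M_j$, so the framework of Section \ref{GWC} applies verbatim with $B_j=A_j$.

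The key steps, in order, are: (1) note that $A_j$ is a unital semisimple commutative Banach algebra with maximal ideal space $M_j$, so Theorem \ref{2} is applicable with $B_j=A_j$; (2) invoke Theorem \ref{uniform} to conclude $\isonly(A_1,A_2)\subset\G$; (3) observe that a map $T$ which is pointwise 2-local in $\isonly(A_1,A_2)$ is therefore pointwise 2-local in $\G$, simply because each local representative $T_{f,g,x}\in\isonly(A_1,A_2)\subset\G$; (4) apply Theorem \ref{2} to $T$ to obtain the continuous maps $\pi:M_2\to M_1$ and $\epsilon:M_2\to\{\pm1\}$ with $T(f)=T(0)+(T(\1)-T(0))[f\circ\pi]^\epsilon$ for all $f\in A_1$ and $T(\1)-T(0)$ unimodular in $A_2$. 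This is then exactly the asserted conclusion.

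There is essentially no obstacle here: the proof is a two-line deduction chaining Theorem \ref{uniform} (to get the structural inclusion into $\G$) with Theorem \ref{2} (to get the rigidity of pointwise 2-local maps in $\G$). If anything needs attention it is the bookkeeping in step (3) — checking that "pointwise 2-local in a subclass $\SSS\subset\G$" trivially implies "pointwise 2-local in $\G$" — but this is immediate from the definition of pointwise 2-locality, since the witnesses $T_{f,g,x}$ may simply be reused. One should also remark, as the statement does, that the conclusion improves on the general $\G$-statement only in that the hypothesis is phrased in terms of genuine surjective isometries; the output maps $\pi$ and $\epsilon$ are the same continuous (not necessarily homeomorphic) maps produced by Theorem \ref{2}, and no extra regularity of $\pi$ should be expected without further hypotheses, in line with the Remark following Theorem \ref{2}.

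\begin{proof}
Each $A_j$ is a unital semisimple commutative Banach algebra whose maximal ideal space is (homeomorphic to) $M_j$, and under the Gelfand transform $A_j$ is identified with a uniform algebra on $M_j$; thus the setting of Section \ref{GWC} applies with $B_j=A_j$. By Theorem \ref{uniform}, every $U\in\isonly(A_1,A_2)$ has the form $U(f)=U(0)+\alpha[f\circ\pi]^\epsilon$ for some $\alpha\in A_2$ with $|\alpha|=1$ on $M_2$, some homeomorphism (in particular continuous map) $\pi:M_2\to M_1$, and some continuous $\epsilon:M_2\to\{\pm1\}$. Taking $\beta=U(0)$ shows $U\in\G$, so $\isonly(A_1,A_2)\subset\G$.

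Now let $T\in M(A_1,A_2)$ be pointwise 2-local in $\isonly(A_1,A_2)$. For every $f,g\in A_1$ and every $x\in M_2$ there is $T_{f,g,x}\in\isonly(A_1,A_2)$ with $(T(f))(x)=(T_{f,g,x}(f))(x)$ and $(T(g))(x)=(T_{f,g,x}(g))(x)$. Since $T_{f,g,x}\in\isonly(A_1,A_2)\subset\G$, the same witnesses show that $T$ is pointwise 2-local in $\G$. Applying Theorem \ref{2}, we obtain a continuous map $\pi:M_2\to M_1$ and a continuous map $\epsilon:M_2\to\{\pm1\}$ such that
\[
T(f)=T(0)+(T(\1)-T(0))[f\circ\pi]^\epsilon, \quad f\in A_1,
\]
where $T(\1)-T(0)$ is a unimodular element of $A_2$. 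This is the desired conclusion.
\end{proof}
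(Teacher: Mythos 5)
Your proof is correct and follows exactly the paper's route: the paper also deduces Corollary \ref{uniform2} by noting that Theorem \ref{uniform} yields $\isonly(A_1,A_2)\subset\G$ and then applying Theorem \ref{2} directly. Your additional bookkeeping (reusing the witnesses $T_{f,g,x}$ to pass from pointwise 2-locality in $\isonly(A_1,A_2)$ to pointwise 2-locality in $\G$) is a harmless elaboration of what the paper leaves implicit.
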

%%%%%%%%%%%%%%%%%%%%%%%%%%%%%%%%%%%%%%%%%%%%%%%%%%%%%%%%%%%
  We have the following. 
%%%%%%%%%%%%%%%%%%%%%%%%%%%%%%%%%%%%%%%%%%%%%%%%%%%%%%%%%%%%%%%%
\begin{cor}\label{C(X)}
Let $X_j$ be a first countable compact Hausdorff space for $j=1,2$. Suppose that $T \in M(C(X_1),C(X_2))$ is 2-local in $\operatorname{Iso}(C(X_1), C(X_2))$. Then we have  $T \in \operatorname{Iso}(C(X_1),  C(X_2))$.
\end{cor}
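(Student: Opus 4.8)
The plan is to combine Corollary \ref{uniform2} (applied to the uniform algebras $A_j=C(X_j)$, whose maximal ideal spaces are $M_j=X_j$) with a surjectivity argument that exploits the first countability of $X_1$ and $X_2$. By Corollary \ref{uniform2}, since $C(X_j)$ is a uniform algebra with maximal ideal space $X_j$, any $T$ that is $2$-local in $\isonly(C(X_1),C(X_2))$ — hence in particular pointwise $2$-local — has the form $T(f)=T(0)+(T(\1)-T(0))[f\circ\pi]^{\epsilon}$ for a continuous $\pi:X_2\to X_1$, a continuous $\epsilon:X_2\to\{\pm1\}$, and a unimodular $T(\1)-T(0)\in C(X_2)$. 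Since $T$ is $2$-local in $\isonly$ it is automatically an isometry; the only thing left to prove is that $T$ is surjective onto $C(X_2)$, and by the remark in the introduction (``Once surjectivity of a $2$-local map is obtained, it is a surjective isometry'') this will finish the proof. In view of the explicit form of $T$ and the fact that $f\mapsto T(0)+(T(\1)-T(0))[f\circ\pi]^{\epsilon}$ is a bijection of $C(X_1)$ onto $C(X_2)$ precisely when $\pi$ is a homeomorphism, the whole problem reduces to showing that $\pi:X_2\to X_1$ is a homeomorphism.

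The first reduction step is to show $\pi$ is injective. Here I would use the genuine ($2$-)locality of $T$ rather than mere pointwise $2$-locality: given $x,y\in X_2$ with $\pi(x)=\pi(y)$, I want to produce functions on which $T$ is governed by a single surjective isometry $T_{f,g}$ and derive $x=y$. Concretely, pick $f\in C(X_1)$ and consider $T_{f,\1}\in\isonly(C(X_1),C(X_2))$ with $T(f)=T_{f,\1}(f)$, $T(\1)=T_{f,\1}(\1)$; by Theorem \ref{uniform}, $T_{f,\1}$ is induced by a \emph{homeomorphism} $\pi_{f,\1}:X_2\to X_1$. The idea is that on a suitable $2$-element set of test functions, the local homeomorphism $\pi_{f,\1}$ must agree (in the relevant sense) with $\pi$, so that injectivity of $\pi_{f,\1}$ forces injectivity of $\pi$. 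One must be slightly careful because $T$ agrees with $T_{f,g}$ only at $f$ and $g$, so I would choose $f$ (and, if needed, a second function $g$) that separates $\pi(x)$ from any other point of $X_1$ — possible since $C(X_1)$ separates points — and compare the values of $T(f)$ at $x$ and $y$ with those of $T_{f,\1}(f)$, reading off $\pi(x)=\pi(y)$ via the formula to conclude $\pi_{f,\1}(x)=\pi_{f,\1}(y)$ and hence $x=y$.

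Next I would show $\pi$ is surjective. Suppose $\pi(X_2)\subsetneq X_1$; since $\pi$ is continuous and $X_2$ compact, $\pi(X_2)$ is a proper closed subset, so by Urysohn there is a nonzero $g\in C(X_1)$ vanishing on $\pi(X_2)$. Then by the formula $T(g)=T(0)+(T(\1)-T(0))[g\circ\pi]^\epsilon=T(0)=T(0)$ pointwise on $X_2$, i.e. $T(g)=T(0)$. But $T$ is an isometry with $\|g-0\|_\infty=\|g\|_\infty>0$, forcing $\|T(g)-T(0)\|_\infty>0$, a contradiction. (Alternatively one can invoke Corollary \ref{homeo}: here injectivity of $T$ follows from $T$ being an isometry, so $\pi(M_2)$ is a uniqueness set for $C(X_1)$, which for $C(X_1)$ means dense, and being closed it is all of $X_1$.) Combining with injectivity and compactness of $X_2$ (so that a continuous bijection from a compact space to a Hausdorff space is a homeomorphism), $\pi$ is a homeomorphism.

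Finally, with $\pi:X_2\to X_1$ a homeomorphism, the map $S:C(X_2)\to C(X_1)$ defined by $S(h)=[(h-T(0))/(T(\1)-T(0))]^{\epsilon}\circ\pi^{-1}$ — using that $T(\1)-T(0)$ is unimodular, hence invertible in $C(X_2)$, and that $\epsilon\circ\pi^{-1}$ is continuous $\{\pm1\}$-valued — is a well-defined inverse to $T$, so $T$ is bijective; being already an isometry, $T\in\isonly(C(X_1),C(X_2))$. The main obstacle I anticipate is the injectivity of $\pi$: this is exactly the place where one genuinely needs $2$-locality (not just pointwise $2$-locality) together with first countability, and one must set up the comparison with the local surjective isometries $T_{f,g}$ carefully, since they agree with $T$ only on the two prescribed functions — the first countability is presumably what lets one pass from ``separating finitely many points'' to controlling $\pi$ on all of $X_2$, e.g. by choosing, around a non-injectivity point, a countable family of test functions realizing a metric and extracting the needed coincidence.
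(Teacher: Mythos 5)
Your overall strategy follows the paper's: invoke Corollary \ref{uniform2} to get the representation $T(f)=T(0)+(T(\1)-T(0))[f\circ\pi]^{\epsilon}$, prove $\pi$ is a homeomorphism, and conclude; your surjectivity argument for $\pi$ (Urysohn, or Corollary \ref{homeo} plus the fact that a 2-local isometry is an injective isometry) and the final step are correct. The genuine gap is the injectivity of $\pi$, which you yourself identify as the main obstacle but never actually prove. Two concrete problems. First, the test function you want — one ``separating $\pi(x)$ from any other point of $X_1$'', i.e.\ $g\in C(X_1)$ with $g^{-1}(0)=\{\pi(x)\}$ — cannot be produced from the mere fact that $C(X_1)$ separates points: point separation gives, for each pair of points, \emph{some} function distinguishing them, not one function distinguishing $\pi(x)$ from all other points simultaneously. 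The existence of such a $g$ is precisely where first countability is needed (in a first countable compact Hausdorff space every singleton is a $G_\delta$, hence a zero set); your closing remark that first countability ``presumably'' enters via a countable family of test functions realizing a metric is not an argument. Second, even granted such a $g$, your plan to compare values of $T(f)$ at $x$ and $y$ with those of $T_{f,\1}(f)$ and deduce $\pi_{f,\1}(x)=\pi_{f,\1}(y)$ is not a valid mechanism: $T$ agrees with the witness only at the two prescribed functions, and agreement of two scalar values does not let you transfer injectivity of the witness's homeomorphism to $\pi$.

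The paper's actual argument, which you should supply (or replace by an equivalent one), is a zero-set comparison. Suppose $\pi(y_1)=\pi(y_2)=x$. Using first countability choose $g\in C(X_1)$ with $g^{-1}(0)=\{x\}$, set $T_0=T-T(0)$ and $T_1=\overline{T_0(\1)}\,T_0$, and apply 2-locality to the pair $(0,g)$: since the witness lies in $\operatorname{Iso}(C(X_1),C(X_2))$, Theorem \ref{uniform} gives $T_1(g)=\alpha_{0,g}[g\circ\pi_{0,g}]^{\epsilon_{0,g}}$ with $\pi_{0,g}$ a \emph{homeomorphism}, so $(T_1(g))^{-1}(0)=\pi_{0,g}^{-1}(x)$ is a singleton. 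On the other hand the global representation gives $T_1(g)=[g\circ\pi]^{\epsilon}$, so $(T_1(g))^{-1}(0)=\pi^{-1}(x)\supseteq\{y_1,y_2\}$. Comparing the two forces $y_1=y_2$. Without this step your proof is incomplete, since everything downstream (surjectivity of $\pi$ plus compactness giving a homeomorphism, hence $T\in\operatorname{Iso}(C(X_1),C(X_2))$) depends on injectivity of $\pi$.
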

%%%%%%%%%%%%%%%%%%%%%%%%%%%%%%%%%%%%%%%%%%%%%%%%%%%%%%%%%%%%%%%%
\begin{proof}
Let $T$ be a 2-local in $\operatorname{Iso}(C(X_1), C(X_2))$.  By Corollary $\ref{uniform2}$, there exist a continuous map $\pi:X_2 \to X_1$ and a continuous map $\epsilon: X_2 \to \{\pm1\}$ such that
\begin{equation}\label{112233}
T(f)=T(0)+(T(\1)-T(0))[f\circ \pi]^\epsilon, \quad f \in C(X_1).
\end{equation}
%Note that $T$ is 2-local in $\operatorname{Iso}(C(X_1), C(X_2))$, thus 2-local in $\G$. This implies that we apply Theorem \ref{2} to 
We prove $\pi$ is an injection. 
%Suppose that $M_1$ is first countable.
%Since $T$ is pointwise 2-local in $\G$, we apply Theorem \ref{2}. Let $\pi:X_2\to X_1$ be the corresponding continuous map in Theorem \ref{2}. 
Suppose that $y_1, y_2 \in X_2$ such that $\pi(y_1)=\pi(y_2)=x \in X_1$. Since $X_1$ is first countable there exists $g \in C(X_1)$ such that  $g^{-1}(0)=\{x\}$. Since $T_1=\overline{T_0(\1)}T_0$ for $T_0=T-T(0)$ is 2-local in $\operatorname{Iso}(C(X_1), C(X_2))$, we have 
\begin{equation*}
\begin{split}
0=T_1(0)&=T_{0,g}(0)\\
&=\beta_{0,g}+\alpha_{0,g}[0 \circ \pi_{0,g}]^{\epsilon_{0,g}}=\beta_{0,g},
\end{split}
\end{equation*}
and
\begin{equation*}
\begin{split}
T_1(g)&=T_{0,g}(g)\\
&=\beta_{0,g}+\alpha_{0,g}[g \circ \pi_{0,g}]^{\epsilon_{0,g}}.
\end{split}
\end{equation*}
Hence we see that
\[
T_1(g)=\alpha_{0,g}[g \circ \pi_{0,g}]^{\epsilon_{0,g}}.
\]
We have 
\[
(T_1(g))^{-1}(0)=(g \circ \pi_{0,g})^{-1}(0)=\pi_{0,g}^{-1}(x).
\]
Since $\pi_{0,g}$ is homeomorphism, the set $\pi_{0,g}^{-1}(x)$ is a singleton. Moreover applying \eqref{112233} we have
\[
T_1(g)=[g\circ \pi]^{\epsilon}.
\]
Thus we obtain 
\[
(T_1(g))^{-1}(0)=(g\circ \pi)^{-1}(0)=\pi^{-1}(x) \ni \{y_1,y_2\}.
\]
As we have already proved that the set $(T_1(g))^{-1}(0)=\pi_{0,g}^{-1}(x)$ is a singleton, we infer that $y_1=y_2$. Thus $\pi$ is injective.
%we have by (1) of 
% for any $x \in X_j$, there exists $f_x \in B_j$ such that $f_{x}^{-1}(0)=\{x\}$. Applying 
%Corollary \ref{homeo} that $\pi$ is an injection. 
Since $T$ is a 2-local isometry, $T$ is an isometry by the definition of a 2-local isometry. Hence $T$ is injective. 
By Corollary \ref{homeo}, $\pi(X_2)$ is a uniqueness set for $C(X_1)$, which is $X_1$ itself. As $X_j$ is compact Hausdorff space, we infer that $\pi$ is a homeomorphism.  It follows that $T \in \operatorname{Iso}(C(X_1), C(X_2))$
\end{proof}
%%%%%%%%%%%%%%%%%%%%%%%%%%%%%%%%%%%%%%%%%%%%%%%%%%%%%%%%%%%%%
 Corollary \ref{C(X)} gives an affirmative answer to the problem mentioned by Moln\'ar. Mori proved the same statement in \cite[Theorem 4.6]{hamo} by a different arugument. 

%Next is a positive result concerning the problem mentioned by Moln\'ar.

Next we consider the disk algebra. 
Let $\bar{\mathbb D}$ be the closed unit disk. 
%{\blue{
%The disk algebra $A(\bar{\mathbb D})$ on $\bar{\mathbb D}$ is the algebra of all continuous functions on $\bar{\mathbb D}$ which are analytic on the open unit disk ${\mathbb D}$. The disk algebra on $\bar{\mathbb D}$ is a uniform algebra on $\bar{\mathbb D}$. It is well known that the maximal ideal space of $A(\bar{\mathbb D})$ is $\bar{\mathbb D}$.
%}}
%%%%%%%%%%%%%%%%%%%%%%%%%%%%%%%%%%%%%%%%%%%%%%%%%%%%%%
\begin{cor}\label{disk}
Suppose that $U$ is a surjective isometry from the disk algebra $A(\bar{\mathbb D})$ onto itself. Then there exists a M\"obius transformation 
%an analytic automorphism on the unit disk 
$\varphi$ on $\bar{\mathbb D}$ and a unimodular constant $\alpha$ such that 
\[
U(f)=U(0)+\alpha f\circ\varphi,  \quad f\in A(\bar{\mathbb D})
\]
or
\[
U(f)=U(0)+\alpha \overline{f\circ\bar{\varphi}},  \quad f\in A(\bar{\mathbb D}).
\]
Conversely if one of the above equations holds, then $U$ is a surjective isometry from the disk algebra onto itself.
\end{cor}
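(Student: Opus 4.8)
The plan is to derive Corollary \ref{disk} from the known description of surjective isometries of uniform algebras together with the classical fact that the only homeomorphisms of $\bar{\mathbb D}$ onto itself that are analytic on $\mathbb D$ are the M\"obius transformations. First I would record that $A(\bar{\mathbb D})$ is a uniform algebra on $\bar{\mathbb D}$ whose maximal ideal space is $\bar{\mathbb D}$; hence Theorem \ref{uniform} applies with $A_1=A_2=A(\bar{\mathbb D})$, $M_1=M_2=\bar{\mathbb D}$, and $X_1=X_2=\bar{\mathbb D}$. It furnishes a homeomorphism $\pi:\bar{\mathbb D}\to\bar{\mathbb D}$, a function $\alpha\in A(\bar{\mathbb D})$ with $|\alpha|=1$ on $\bar{\mathbb D}$, and a continuous $\epsilon:\bar{\mathbb D}\to\{\pm1\}$ with $U(f)=U(0)+\alpha[f\circ\pi]^{\epsilon}$ for all $f\in A(\bar{\mathbb D})$.

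Next I would pin down $\alpha$ and $\epsilon$. Since $\bar{\mathbb D}$ is connected and $\epsilon$ is continuous into $\{\pm1\}$, $\epsilon$ is constant; so either $U(f)-U(0)=\alpha\, f\circ\pi$ for all $f$, or $U(f)-U(0)=\alpha\,\overline{f\circ\pi}$ for all $f$. In the first case, taking $f=\mathrm{id}_{\bar{\mathbb D}}$ shows $\alpha\,\pi=U(\mathrm{id})-U(0)\in A(\bar{\mathbb D})$; since $\alpha=\overline{1/\alpha}$ has $|\alpha|\equiv 1$, the only way $\alpha$ itself lies in the disk algebra with $\alpha\pi$ also in it is forced — more directly, $\alpha = (U(\1)-U(0))$ by plugging $f=\1$, and $|\alpha|\equiv 1$ on $\bar{\mathbb D}$ with $\alpha\in A(\bar{\mathbb D})$ forces $\alpha$ to be a unimodular constant by the maximum modulus principle applied to the analytic, non-vanishing $\alpha$ (its modulus attains an interior value $1$ hence is constant, and a constant-modulus analytic function is constant). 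Then $\pi=\alpha^{-1}(U(\mathrm{id})-U(0))\in A(\bar{\mathbb D})$, so $\pi$ is analytic on $\mathbb D$; being also a homeomorphism of $\bar{\mathbb D}$ onto itself, $\pi$ is a M\"obius transformation $\varphi$, giving $U(f)=U(0)+\alpha\, f\circ\varphi$. In the conjugate-linear case, the same argument applied to the map $g\mapsto\overline{g}$ conjugated appropriately, or equivalently noting that $\overline{f\circ\pi}$ is anti-analytic so $\overline{\cdot}$ of it recovers an analytic function, shows $\pi$ is an anti-M\"obius map; writing $\varphi=\bar\pi$ one gets $\pi=\bar\varphi$ with $\varphi$ M\"obius, and $U(f)=U(0)+\alpha\,\overline{f\circ\bar\varphi}$, with $\alpha$ again a unimodular constant by the same maximum-modulus reasoning.

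For the converse, I would simply verify directly that each of the two displayed formulas defines a surjective isometry. Surjectivity is clear because $f\mapsto f\circ\varphi$ (resp. $f\mapsto\overline{f\circ\bar\varphi}$) is a bijection of $A(\bar{\mathbb D})$ with inverse $g\mapsto g\circ\varphi^{-1}$ (resp. $g\mapsto\overline{g\circ\overline{\varphi^{-1}}}$), multiplication by the unimodular constant $\alpha$ is a bijection, and translation by $U(0)$ is a bijection. For isometry, composition with a self-homeomorphism of $\bar{\mathbb D}$ preserves the supremum norm, complex conjugation preserves it, multiplication by a unimodular constant preserves it, and translation does not change differences; hence $\|U(f)-U(g)\|_\infty=\|\alpha\big([f\circ\varphi]-[g\circ\varphi]\big)\|_\infty=\|f-g\|_\infty$ (and likewise in the conjugate case).

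The main obstacle I anticipate is the bookkeeping in the conjugate-linear branch: one must be careful that $\overline{f\circ\pi}\in A(\bar{\mathbb D})$ forces $\overline{\pi}$ to be analytic (not $\pi$), so the relevant M\"obius transformation is $\bar\pi$, and that the unimodularity of $\alpha$ still upgrades to constancy in this branch (it does, since $\alpha\in A(\bar{\mathbb D})$ regardless of which branch we are in, the branch only affecting the composition part). Everything else is routine once Theorem \ref{uniform} and the connectedness of $\bar{\mathbb D}$ are in hand.
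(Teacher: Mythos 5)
Your proposal is correct and follows essentially the same route as the paper: apply Theorem \ref{uniform}, use connectedness of $\bar{\mathbb D}$ to make $\epsilon$ constant and the maximum modulus principle to make the unimodular $\alpha\in A(\bar{\mathbb D})$ a constant, then plug in $f=\id$ to see that $\pi$ (resp.\ $\bar\pi$) is analytic and hence, being a homeomorphism, a M\"obius transformation. The direct verification of the converse matches the paper's (there stated as trivial), so no further comment is needed.
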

%%%%%%%%%%%%%%%%%%%%%%%%%%%%%%%%%%%%%%%%%%%%%%%%%%%%%
\begin{proof}
Applying Theorem \ref{uniform} we have a homeomorphism $\pi:\bar{\mathbb D}\to \bar{\mathbb D}$, a unimodular function $\alpha\in A(\bar{\mathbb D})$, and a continuous map $\epsilon:\bar{\mathbb D}\to \{\pm 1\}$ such that 
\begin{equation}\label{*}
U(f)=U(0)+ \alpha [f\circ\pi]^{\epsilon},\quad f\in A(\bar{\mathbb D}).
\end{equation}
Due to the maximum modulus principle for analytic functions, $\alpha$ is a unimodular constant. Since $\bar{\mathbb D}$ is connected, $\epsilon=1$ on $\bar{\mathbb D}$, or $\epsilon=-1$ on $\bar{\mathbb D}$. Letting $f=\operatorname{Id}$, the identity function, in \eqref{*} we have
\begin{equation}\label{*1}
\text{$\bar{\alpha}(U(\id)-U(0))=\pi$ if $\epsilon =1$},
\end{equation}
\begin{equation}\label{*2}
\text{$\bar{\alpha}(U(\id)-U(0))=\bar{\pi}$ if $\epsilon =-1$}.
\end{equation}
Suppose that $\epsilon=1$. Then $\pi$ is analytic on ${\mathbb D}$ by \eqref{*1}. As $\pi$ is a homeomorphism, we conclude that $\pi$ is a M\"obius transformation. In the same way, $\bar{\pi}$ is a M\"obius transformation if $\epsilon=-1$. Letting $\varphi=\pi$ if $\epsilon=1$, and $\varphi=\bar{\pi}$ if $\epsilon=-1$, $\varphi$ is a M\"obius transformation. It follows that 
\[
U(f)=U(0)+\alpha f\circ \varphi,\quad f\in A(\bar{\mathbb D})
\]
if $\epsilon=1$ and 
\[
U(f)=U(0)+\alpha \overline{f\circ \bar{\varphi}},\quad f\in A(\bar{\mathbb D})
\]
if $\epsilon=-1$. 

The converse statement is trivial.
\end{proof}
%%%%%%%%%%%%%%%%%%%%%%%%%%%%%%%%%%%%%%%%%%%%%%%%%%%%%%%%%%%%%%%%%%%%%%%%%%%%%%%%%%%%%%%%%%%%%%%%%%%%%%%%%%%%%%%%%%%%%%%%%%%%%%
By Corollary \ref{disk} we see that 
\[
\isonly(A(\bar{\mathbb D}), A(\bar{\mathbb D}))\subset \G
\]
for the disk algebra $A(\bar{\mathbb D})$. 
%The maximal ideal space of the disk algebra $A(\bar{\mathbb D})$ is $\bar{\mathbb D}$. Then we have
%%%%%%%%%%%%%%%%%%%%%%%%%%%%%%%%%%%%%%%%%%%%%%%
\begin{cor}\label{diskalgebra}
Suppose that $T\in M(A(\bar{\mathbb D}),A(\bar{\mathbb D}))$ is 2-local in $\isonly(A(\bar{\mathbb D}),A(\bar{\mathbb D}))$. Then $T\in \isonly(A(\bar{\mathbb D}),A(\bar{\mathbb D}))$.
\end{cor}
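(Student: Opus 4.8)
The plan is to combine Corollary~\ref{disk}, which identifies $\isonly(A(\bar{\mathbb D}),A(\bar{\mathbb D}))$ explicitly and in particular shows $\isonly(A(\bar{\mathbb D}),A(\bar{\mathbb D}))\subset\G$, with Theorem~\ref{2}. Since $T$ is $2$-local in $\isonly(A(\bar{\mathbb D}),A(\bar{\mathbb D}))$, it is in particular pointwise $2$-local in $\G$, so Theorem~\ref{2} gives a continuous $\pi:\bar{\mathbb D}\to\bar{\mathbb D}$ and a continuous $\epsilon:\bar{\mathbb D}\to\{\pm1\}$ with
\[
T(f)=T(0)+(T(\1)-T(0))[f\circ\pi]^{\epsilon},\qquad f\in A(\bar{\mathbb D}),
\]
and $T(\1)-T(0)$ unimodular in $A(\bar{\mathbb D})$. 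By the maximum modulus principle $T(\1)-T(0)$ is a unimodular constant $\alpha$, and since $\bar{\mathbb D}$ is connected $\epsilon\equiv 1$ or $\epsilon\equiv -1$ on $\bar{\mathbb D}$. Plugging in $f=\id$ as in the proof of Corollary~\ref{disk} shows that $\pi$ (if $\epsilon\equiv1$) or $\bar\pi$ (if $\epsilon\equiv-1$) lies in $A(\bar{\mathbb D})$ and is a homeomorphism of $\bar{\mathbb D}$; the only obstruction at this stage is the injectivity of $\pi$, which must be extracted exactly as in the proof of Corollary~\ref{C(X)}.

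So the first main step is to prove $\pi$ injective. I would set $T_0=T-T(0)$ and $T_1=\overline{T_0(\1)}T_0=\bar\alpha T_0$, which is again $2$-local in $\isonly(A(\bar{\mathbb D}),A(\bar{\mathbb D}))$ and satisfies $T_1(f)=[f\circ\pi]^{\epsilon}$. Suppose $\pi(y_1)=\pi(y_2)=x$. Because singletons in $\bar{\mathbb D}$ are peak-interpolation sets or at least uniqueness-detectable, pick $g\in A(\bar{\mathbb D})$ with $g^{-1}(0)=\{x\}$ (e.g. $g(z)=z-x$). For the pair $0,g$ there is $T_{0,g}\in\isonly(A(\bar{\mathbb D}),A(\bar{\mathbb D}))$, of the form $T_{0,g}(h)=\beta_{0,g}+\alpha_{0,g}[h\circ\varphi_{0,g}]^{\epsilon_{0,g}}$ with $\varphi_{0,g}$ a M\"obius transformation (hence a homeomorphism), and $T_{0,g}(0)=0$ forces $\beta_{0,g}=0$; then $T_1(g)=T_{0,g}(g)=\alpha_{0,g}[g\circ\varphi_{0,g}]^{\epsilon_{0,g}}$ has zero set $\varphi_{0,g}^{-1}(x)$, a singleton. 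But $T_1(g)=[g\circ\pi]^{\epsilon}$ also has zero set $\pi^{-1}(x)\ni\{y_1,y_2\}$, forcing $y_1=y_2$. This is the step I expect to require the most care, though it is essentially a transcription of the $C(X)$ argument; the only subtlety is confirming that the element $g$ with a single zero exists in the disk algebra (it does, trivially) and that the $2$-locality of $T_1$ is inherited correctly.

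The second step is to upgrade injectivity to surjectivity. Since $T$ is a $2$-local isometry it is an isometry, hence injective; by Corollary~\ref{homeo} the set $\pi(\bar{\mathbb D})$ is a uniqueness set for $A(\bar{\mathbb D})$. Now $\pi(\bar{\mathbb D})$ is a compact, hence closed, subset of $\bar{\mathbb D}$; a closed proper subset of $\bar{\mathbb D}$ cannot be a uniqueness set for $A(\bar{\mathbb D})$ (for instance, if some point $w_0\in\bar{\mathbb D}\setminus\pi(\bar{\mathbb D})$ were missing, one builds a nonzero function in $A(\bar{\mathbb D})$ vanishing on $\pi(\bar{\mathbb D})$; in fact it suffices to note that $\pi(\bar{\mathbb D})$ has nonempty interior in $\bar{\mathbb D}$ because $\pi|_{\mathbb D}$ is, up to conjugation, a nonconstant analytic map, so an analytic function vanishing there vanishes identically, and conversely a proper closed set of points including a boundary arc kills no nonzero $A(\bar{\mathbb D})$ function only if it is everything). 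Hence $\pi(\bar{\mathbb D})=\bar{\mathbb D}$, so $\pi$ is a continuous injective surjection of the compact Hausdorff space $\bar{\mathbb D}$ onto itself, i.e. a homeomorphism.

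Finally, with $\pi$ a homeomorphism, $\alpha$ a unimodular constant and $\epsilon$ constant, the formula for $T$ becomes exactly one of the two shapes displayed in Corollary~\ref{disk}, with $\varphi=\pi$ or $\varphi=\bar\pi$ a M\"obius transformation. By the converse direction of Corollary~\ref{disk}, any map of that form is a surjective isometry of $A(\bar{\mathbb D})$, so $T\in\isonly(A(\bar{\mathbb D}),A(\bar{\mathbb D}))$, which is the assertion. I would present the argument as: (1) invoke Theorem~\ref{2} to get the representation; (2) reduce $\alpha,\epsilon$ to constants via maximum modulus and connectedness; (3) prove $\pi$ injective by the peak-function argument above; (4) prove $\pi$ surjective via Corollary~\ref{homeo} plus the uniqueness-set property of the disk algebra; (5) conclude via Corollary~\ref{disk}. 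The genuine content is concentrated in step (3), and perhaps in making step (4) airtight, since everything else is bookkeeping on top of the two corollaries already established.
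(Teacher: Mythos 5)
Your steps (1)--(3) are fine and parallel to what the paper does (the injectivity argument via $g(z)=z-x$ is correct, though it turns out to be unnecessary), but step (4) contains a genuine gap: for the disk algebra, the implication ``$\pi(\bar{\mathbb D})$ is a uniqueness set for $A(\bar{\mathbb D})$ $\Rightarrow$ $\pi(\bar{\mathbb D})=\bar{\mathbb D}$'' is false. Uniqueness sets for $A(\bar{\mathbb D})$ can be very small: any subset of $\bar{\mathbb D}$ with a limit point in ${\mathbb D}$ (for instance the closed disk of radius $1/2$, or a convergent sequence together with its limit in ${\mathbb D}$) is a uniqueness set by the identity theorem, and it is a proper closed subset. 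In fact, since $\varphi$ is a nonconstant analytic self-map of ${\mathbb D}$, the image $\pi(\bar{\mathbb D})$ automatically contains a nonempty open subset of ${\mathbb D}$, so Corollary \ref{homeo} is vacuously satisfied and gives no information whatsoever about surjectivity of $\pi$. This is exactly the point where the $C(X)$ argument (where Urysohn's lemma produces a nonzero function vanishing on any proper closed subset) does not transfer to the disk algebra, and your parenthetical attempt to patch it (``a proper closed set \dots kills no nonzero $A(\bar{\mathbb D})$ function only if it is everything'') is the false statement in question. Without surjectivity of $\pi$ you cannot conclude that $\varphi$ is a M\"obius transformation: $\varphi(z)=z/2$ is injective, analytic, and continuous on $\bar{\mathbb D}$ but is not an automorphism.

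The paper closes this gap by a different, shorter device: after reducing to $T_1=\overline{T(\1)-T(0)}\,(T-T(0))$ with $T_1(f)=f\circ\varphi$ (or $\overline{f\circ\bar\varphi}$), it applies the $2$-locality of $T_1$ to the single pair $(\id,0)$. By Corollary \ref{disk} the interpolating map has the form $h\mapsto u+\alpha_0\, h\circ\varphi_0$ or $h\mapsto u+\alpha_0\,\overline{h\circ\bar{\varphi_0}}$ with $\varphi_0$ a M\"obius transformation and $|\alpha_0|=1$; evaluating at $0$ gives $u=0$, and evaluating at $\id$ gives $\varphi=T_1(\id)=\alpha_0\varphi_0$ in either case, which is itself a M\"obius transformation, hence automatically a homeomorphism of $\bar{\mathbb D}$. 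The converse part of Corollary \ref{disk} then yields $T\in\isonly(A(\bar{\mathbb D}),A(\bar{\mathbb D}))$. If you replace your steps (3)--(4) by this single application of $2$-locality to $(\id,0)$, your argument becomes complete (and your injectivity step can be discarded).
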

%%%%%%%%%%%%%%%%%%%%%%%%%%%%%%%%%%%%%%%%%%%%%%%%%%%
\begin{proof}
Corollary \ref{uniform2} asserts that 
there exist a continuous map $\pi:\bar{\mathbb D} \to \bar{\mathbb D}$ and a continuous map $\epsilon: \bar{\mathbb D} \to \{\pm1\}$ such that
\begin{equation}\label{disk*}
T(f)=T(0)+(T(\1)-T(0))[f\circ \pi]^\epsilon, \quad f \in A(\bar{\mathbb D}),
\end{equation}
where $T(\1)-T(0)$ is a unimodular function. 
By the same way as the proof of Corollary \ref{disk} we see that 
 $T(\1)-T(0)$ is a unimodular constant. 
We also see that $\epsilon =1$ on $\bar{\mathbb D}$ or $\epsilon=-1$ on $\bar{\mathbb D}$ because $\bar{\mathbb D}$ is connected and $\epsilon$ is continuous. Letting $f=\id$ in \eqref{disk*}, we have that $\pi$ is analytic on ${\mathbb D}$ if $\epsilon=1$, and $\bar{\pi}$ is analytic on ${\mathbb D}$ if $\epsilon =-1$. Put $\varphi=\pi$ if $\epsilon =1$ and $\varphi=\bar{\pi}$ if $\epsilon=-1$. Put $T_1=\overline{T(\1)-T(0)}(T-T(0))$. Then 
\[
T_1(f)=f\circ \varphi,\quad f\in A(\bar{\mathbb D})
\]
if $\epsilon=1$, and 
\[
T_1(f)=\overline{f\circ \bar{\varphi}},\quad f\in A(\bar{\mathbb D})
\]
if $\epsilon=-1$.
Since $T_1$ is 2-local in $\isonly(A(\bar{\mathbb D}),A(\bar{\mathbb D}))$, there exists a M\"obius transform $\varphi_0$, $u\in A(\bar{\mathbb D})$, and a unimodular constant $\alpha$ such that
\[
\text{$\varphi=T_1(\id)=u+\alpha\varphi_0$ and $0=T_1(0)=u$}.
\]
It follows that $\varphi=\alpha\varphi_0$. As $|\alpha|=1$, we infer that $\varphi$ is a M\"obius transformation on $\bar{\mathbb D}$. We infer by Corollary \ref{disk} that $T\in \isonly(A(\bar{\mathbb D}),A(\bar{\mathbb D}))$.
\end{proof}
%%%%%%%%%%%%%%%%%%%%%%%%%%%%%%%%%%%%%%%%%%%%%%%%%%%%%%%%%%%%%%%%%%%%%%%%%%%%%%%%%%%%%%%%%%%%%%%%%%%%%%%%%%%%%%%%%%%%%%%%
\subsection{Lipschitz algebras} Let $(X_j,d)$ be a compact metric space for $j=1,2$. Let
\[
\Lip(X_j)=\left\{f\in C(X_j):L(f)=\sup_{x\ne y}\frac{|f(x)-f(y)|}{d(x,y)}<\infty\right\}.
\]
We say that $L(f)$ is the Lipschitz constant for $f$. With 
the norm $\|f\|_{\Sigma}=\|f\|_{\infty}+L(f)$ for $f\in \Lip(X_j)$, the algebra 
 $\Lip(X_j)$ is a unital semisimple commutative Banach algebra. In addition, maximal ideal space of $\Lip(X_j)$ can be identified with $X_j$.
%%%%%%%%%%%%%%%%%%%%%%%%%%%%%%%%%%%%%%%%%%%%%%%%%%%%%%%%%%%%%%%%%%%%%%%%%%%%%%%%%%%%%%%%%%%%%%%%%%%%%%%%%%%%%%%%%%%%%%%%%%%%%%%%%%%%%%%%%%%%%%%%%%%%%%%%%%%%%%%%%%%%%%%%%%%%%%%%%%%%%%%%%%%%%%%%%%%%%%%%%%%%%%%%%%%%%%%%%%%%%%%%
\begin{cor}\label{L}
Let $\|\cdot\|_j$ be any  norm on $\Lip(X_j)$. We do not assume that $\|\cdot\|_j$ is complete. Suppose that
\begin{multline}\label{lipiso}
\text{$\operatorname{Iso}((\Lip(X_1),\|\cdot\|_1), (\Lip(X_2),\|\cdot\|_{2}))$}\\
\text{$=\{T\in M(\Lip(X_1), \Lip(X_2));$}\\
\text{there exist  $\beta \in \Lip(X_2)$, $\alpha \in \mathbb{T}$,}\\
%\text{an $\alpha \in \Lip(X_2) $ with $|\alpha|=1$ on $M_2$,}\\
\text{a surjective isometry $\pi: X_2 \to X_1$, and $\epsilon=\pm1$} \\
\text{such that $T(f)=\beta + \alpha [f\circ \pi]^\epsilon$ for every $f \in \Lip(X_1)$\}}.
\end{multline}
Suppose that $T \in M((\Lip(X_1),\|\cdot\|_1), (\Lip(X_2),\|\cdot\|_{2}))$  is 2-local in $\operatorname{Iso}((\Lip(X_1),\|\cdot\|_1), (\Lip(X_2),\|\cdot\|_{2}))$.
Then  $T \in \operatorname{Iso}((\Lip(X_1),\|\cdot\|_1), (\Lip(X_2),\|\cdot\|_{2}))$.
\end{cor}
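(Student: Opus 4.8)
The plan is to reduce the statement to Theorem~\ref{2} and then to promote the data that theorem produces to the rigid form required by \eqref{lipiso}. First I would observe that, when we regard each $\Lip(X_j)$ as the complete Banach algebra $(\Lip(X_j),\|\cdot\|_\Sigma)$ with maximal ideal space $X_j$, every member of $\operatorname{Iso}((\Lip(X_1),\|\cdot\|_1),(\Lip(X_2),\|\cdot\|_2))$, being a map $f\mapsto\beta+\alpha[f\circ\pi]^\epsilon$ with $\beta\in\Lip(X_2)$, $\alpha\in\mathbb T$, $\pi\colon X_2\to X_1$ a (necessarily continuous) surjective isometry and $\epsilon=\pm1$, lies in $\G$; the norms $\|\cdot\|_1,\|\cdot\|_2$ enter only in the description of the target set, not in the hypotheses of Theorem~\ref{2}. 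Hence a $2$-local map $T$ in $\operatorname{Iso}(\dots)$ is pointwise $2$-local in $\G$, and Theorem~\ref{2} yields continuous maps $\pi\colon X_2\to X_1$ and $\epsilon\colon X_2\to\{\pm1\}$ with
\[
T(f)=T(0)+\bigl(T(\1)-T(0)\bigr)[f\circ\pi]^{\epsilon},\qquad f\in\Lip(X_1),
\]
where $T(\1)-T(0)$ is a unimodular element of $\Lip(X_2)$. It then remains to check four things: $T(0)\in\Lip(X_2)$; $\alpha:=T(\1)-T(0)$ is a constant; $\epsilon$ is constant; and $\pi$ is a surjective isometry. Together these say precisely that $T$ has the form \eqref{lipiso}.

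For the first two points I would feed $T$ the pair $(0,\1)$: there is $T_{0,\1}=\beta'+\alpha'[\,\cdot\circ\pi'\,]^{\epsilon'}\in\operatorname{Iso}(\dots)$ agreeing with $T$ at $0$ and at $\1$, so $T(0)=T_{0,\1}(0)=\beta'\in\Lip(X_2)$ and $\alpha=T(\1)-T(0)=T_{0,\1}(\1)-\beta'=\alpha'\in\mathbb T$. For the constancy of $\epsilon$ I would use the pair of constant functions $(\1,i\1)$: an $S=\beta_S+\alpha_S[\,\cdot\circ\pi_S\,]^{\epsilon_S}\in\operatorname{Iso}(\dots)$ agreeing with $T$ at $\1$ and at $i\1$ satisfies $S(i\1)-S(\1)=\alpha_S\bigl([i]^{\epsilon_S}-1\bigr)$, a constant function, while on the other hand $S(i\1)-S(\1)=T(i\1)-T(\1)=\alpha\bigl([i]^{\epsilon}-1\bigr)$ (the term $T(0)$ cancels). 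Since $\alpha$ is a nonzero constant, $[i]^{\epsilon}$ is constant, and this forces $\epsilon$ to be constant because $[i]^{1}=i\neq -i=[i]^{-1}$.

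For $\pi$, recall that a $2$-local isometry is an isometry and hence injective, so Corollary~\ref{homeo} applies: $\pi(X_2)$ is a uniqueness set for $\Lip(X_1)$. Applying this to the function $x\mapsto d(x,\pi(X_2))\in\Lip(X_1)$, which vanishes on the closed set $\pi(X_2)$, gives $\pi(X_2)=X_1$, so $\pi$ is surjective. To see that $\pi$ is distance preserving, fix $y_1,y_2\in X_2$, set $a=\pi(y_1)$, and feed $T$ the pair $(0,g_a)$ with $g_a:=d(\cdot,a)\in\Lip(X_1)$: a matching $\widetilde T=\widetilde\beta+\widetilde\alpha[\,\cdot\circ\widetilde\pi\,]^{\widetilde\epsilon}\in\operatorname{Iso}(\dots)$ has $\widetilde\beta=\widetilde T(0)=T(0)$, and comparing $\widetilde T(g_a)=T(g_a)$ through the representation above and taking moduli (using $|\widetilde\alpha|=|\alpha|=1$ and $g_a\ge 0$) gives $g_a\circ\widetilde\pi=g_a\circ\pi$, that is, $d(\widetilde\pi(y),a)=d(\pi(y),a)$ for all $y\in X_2$. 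Evaluating at $y_1$ yields $\widetilde\pi(y_1)=a$, and then at $y_2$, using that $\widetilde\pi$ is an isometry, $d(\pi(y_1),\pi(y_2))=d(a,\pi(y_2))=d(\widetilde\pi(y_2),\widetilde\pi(y_1))=d(y_1,y_2)$. Hence $\pi$ is a surjective isometry, $T(f)=T(0)+\alpha[f\circ\pi]^{\epsilon}$ is of the form \eqref{lipiso}, and $T\in\operatorname{Iso}((\Lip(X_1),\|\cdot\|_1),(\Lip(X_2),\|\cdot\|_2))$.

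I expect the main obstacle to be exactly the passage from the soft output of Theorem~\ref{2} — a merely continuous $\pi$, a merely continuous $\epsilon$, and a unimodular \emph{function} $T(\1)-T(0)$ — to the rigid data of \eqref{lipiso}: each promotion has to be forced by a judicious choice of two test functions, exploiting that every member of the target set carries a constant unimodular weight, a constant sign, and an honest surjective isometry of the base spaces. Of these, the distance-function argument pinning down the isometry property of $\pi$ is the least routine.
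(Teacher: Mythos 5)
Your proposal is correct and follows essentially the same route as the paper: apply Theorem \ref{2}, force $T(\1)-T(0)$ and $\epsilon$ to be constants via constant test pairs, get surjectivity of $\pi$ from injectivity of $T$ plus Corollary \ref{homeo}, and pin down the isometry property of $\pi$ by feeding distance functions $d(\cdot,\pi(x_0))$ into the 2-locality. The only deviations are cosmetic: you work with $T$ itself rather than the normalized $T_1=\overline{T_0(\1)}T_0$, and your modulus trick ($|\alpha|=|\widetilde\alpha|=1$, $g_a\ge 0$) neatly bypasses the paper's step of showing $\alpha_{0,g}=1$ and its separate treatment of the singleton case.
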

%%%%%%%%%%%%%%%%%%%%%%%%%%%%%%%%%%%%%%%%%%%%%%%%%%%%%%%%%%%%%
%%%%%%%%%%%%%%%%%%%%%%%%%%%%%%%%%%%%%%%%%%%%%%%%%%%%%%%%%%%%%%
\begin{proof}
Suppose that  $T$ is 2-local in $\operatorname{Iso}((\Lip(X_1),\|\cdot\|_1), (\Lip(X_2),\|\cdot\|_{2}))$. The equality \eqref{lipiso} implies that $\operatorname{Iso}((\Lip(X_1),\|\cdot\|_1), (\Lip(X_2),\|\cdot\|_{2})) \subset \G$. Applying Theorem \ref{2}, there exists a continuous map $\pi: X_2 \to X_1$ and a continuous map $\epsilon: X_2 \to \{\pm1\}$ such that
\begin{equation}\label{LIPISO}
T(f)=T(0)+(T(\1)-T(0))[f\circ \pi]^\epsilon, \quad f \in\Lip(X_1).
\end{equation}
Recall that $T_1=\overline{T_0(\1)}T_0$ for $T_0=T-T(0)$. Since $T_0$ is 2-local, we have 
\[
T_0(\1)=\beta_{0,\1}+\alpha_{0,\1}[\1 \circ \pi_{0,\1}]^{\epsilon_{0,\1}},
\]
and
\[
0=T_0(0)=\beta_{0,\1}+\alpha_{0,\1}[0 \circ \pi_{0,\1}]^{\epsilon_{0,\1}}=\beta_{0,\1}.
\]
It follows that $T(\1)-T(0)=T_0(\1)$ is a unimodular constant. Thus $T_1=\overline{T_0(\1)}T_0$ is 2-local in $\operatorname{Iso}((\Lip(X_1),\|\cdot\|_1), (\Lip(X_2),\|\cdot\|_{2}))$. We get 
\begin{equation*}
\begin{split}
0=T_1(0)&=T_{0,i}(0)\\
&=\beta_{0,i}+\alpha_{0,i}[0 \circ \pi_{0,i}]^{\epsilon_{0,i}}=\beta_{0,i},
\end{split}
\end{equation*}
and
\begin{equation*}
\begin{split}
T_1(i)&=T_{0,i}(i)\\
&=\beta_{0,i}+\alpha_{0,i}[i \circ \pi_{0,i}]^{\epsilon_{0,i}}.
\end{split}
\end{equation*}
We get
\[
T_1(i)=\alpha_{0,i}[i \circ \pi_{0,i}]^{\epsilon_{0,i}}.
\]
%By the definition of $\operatorname{Iso}((\Lip(X_1),\|\dot\|_1), (\Lip(X_2),\|\cdot\|_{2}))$, 
Since $\alpha_{0,i}$ is a unimodular constant and $\epsilon_{0,i}=\pm1$,  so we obtain $T_1(i)$ is a constant. Moreover applying \eqref{LIPISO}, we have
\[
T_1(i)=[i\circ \pi]^{\epsilon}.
\]
Thus we conclude that $\epsilon=1$ or $\epsilon=-1$. As $T$ is a 2-local isometry, $T$ is an isometry, hence $T$ is injective. Corollary \ref{homeo} asserts that $\pi(X_2)$ is a uniqueness set for $\Lip(X_1)$. Thus we have $\pi(X_2)=X_1$. This implies that $\pi$ is surjective. Finally we shall prove that $\pi$ is an isometry. %It causes no loss of generality, to assume that $\epsilon=1$. 
Let $x_0 \in X_2$.  We define a Lipschitz function ${g}$ on $X_1$ by 
\[
g(x)=d(x,\pi(x_0)),\quad x\in X_1.
\]
As $T_1$ is  2-local in $\operatorname{Iso}((\Lip(X_1),\|\cdot\|_1), (\Lip(X_2),\|\cdot\|_{2}))$, there exists $\alpha_{0,g} \in \mathbb{T}$ and $\pi_{0,g}: X_2 \to X_1$ is a surjective isometry such that
\begin{equation*}
\begin{split}
0=T_1(0)&=T_{0,g}(0)\\
&=\beta_{0,g}+\alpha_{0,g}[0 \circ \pi_{0,g}]^{\epsilon_{0,g}}=\beta_{0,g},
\end{split}
\end{equation*}
and
\begin{equation*}
\begin{split}
T_1(g)&=T_{0,g}(g)\\
&=\beta_{0,g}+\alpha_{0,g}[g \circ \pi_{0,g}]^{\epsilon_{0,g}}=\beta_{0,g}+\alpha_{0,g} g \circ \pi_{0,g},
\end{split}
\end{equation*}
because $g$ is a real-valued function. If follows that 
\[
(T_1(g))(z)=\alpha_{0,g}g(\pi_{0,g}(z)), \quad z \in X_2.
\]
By \eqref{LIPISO}, for any $z \in X_2$
\begin{multline}\label{33}
%\begin{equation}
d(\pi(z),\pi(x_0))=[g(\pi(z))]^{\epsilon}\\=(T_1(g))(z)=\alpha_{0,g}g(\pi_{0,g}(z))=\alpha_{0,g}d(\pi_{0,g}(z),\pi(x_0)).
%\end{equation}
\end{multline}
We may suppose that $X_1$ is not a singleton. (Suppose that it is so. Then $X_2$ is a singleton since $\pi_{0,g}$ is a surjective isometry. Then $\pi$ is automatically surjective isometry.) Hence there exists $z_0\in X_2$ such that $d(\pi_{0,g}(z_0),\pi(x_0))\ne 0$. By \eqref{33} with $z=z_0$ we have 
\[
\displaystyle \alpha_{0,g}=\frac{d(\pi(z_0),\pi(x_0))}{d(\pi_{0,g}(z_0),\pi(x_0))} \ge 0,
\]
we obtain $\alpha_{0,g}=1$.  Hence by \eqref{33} we have
\begin{equation}\label{3333}
d(\pi(z),\pi(x_0))=d(\pi_{0,g}(z),\pi(x_0)), \quad z\in X_2.
\end{equation}
Putting $z=x_0$ in \eqref{3333}, we have
\[
0=d(\pi(x_0),\pi(x_0))=d(\pi_{0,g}(x_0),\pi(x_0)).
\]
It follows $\pi_{0,g}(x_0)=\pi(x_0)$. By (\ref{3333})
\[
d(\pi(z),\pi(x_0))=d(\pi_{0,g}(z),\pi(x_0))=d(\pi_{0,g}(z),\pi_{0,g}(x_0))=d(z,x_0)
\]
since $\pi_{0,g}$ is an isometry. As $z$ and $x_0$ are arbitrary, we conclude that $\pi$ is an isometry. This completes the proof.
\end{proof}
%%%%%%%%%%%%%%%%%%%%%%%%%%%%%%%%%%%%%%%%%%%%%%%%%%%%%%%%%%%%%%%%%%%%%%%%%%%%%%
%Note that the equation \eqref{lipiso} holds for an arbitrary compact metric space $X_j$ with $\|\cdot\|_j=\|\cdot\|_{\Sigma}$ for $j=1,2$ \cite[Theorem 6]{haoi}. 
For an arbitrary compact metric space $X_j$ %with $\|\cdot\|_j=\|\cdot\|_{\Sigma}$ 
for $j=1,2$,  \cite[Theorem 6]{haoi} shows that $\operatorname{Iso}((\Lip(X_1),\|\cdot\|_{\Sigma}), (\Lip(X_2),\|\cdot\|_{\Sigma}))$ fulfills the condition of Corollary \ref{L}. Thus we have the following.
\begin{cor}\label{Lip2}
Suppose that $T \in M(\Lip(X_1), \Lip(X_2))$ is 2-local in $\operatorname{Iso}((\Lip(X_1),\|\cdot\|_{\Sigma}), (\Lip(X_2),\|\cdot\|_{\Sigma}))$. 
Then  $T \in \operatorname{Iso}((\Lip(X_1),\|\cdot\|_{\Sigma}), (\Lip(X_2),\|\cdot\|_{\Sigma}))$.
\end{cor}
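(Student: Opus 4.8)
The plan is to deduce Corollary \ref{Lip2} from Corollary \ref{L}; the only thing to check is that, for arbitrary compact metric spaces $X_1,X_2$, the set $\operatorname{Iso}((\Lip(X_1),\|\cdot\|_{\Sigma}), (\Lip(X_2),\|\cdot\|_{\Sigma}))$ has precisely the form prescribed by the hypothesis \eqref{lipiso}, with $\|\cdot\|_1=\|\cdot\|_2=\|\cdot\|_{\Sigma}$.

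First I would invoke the structure theorem for surjective isometries of Lipschitz algebras under the sum norm, \cite[Theorem 6]{haoi}: every $U\in\operatorname{Iso}((\Lip(X_1),\|\cdot\|_{\Sigma}), (\Lip(X_2),\|\cdot\|_{\Sigma}))$ has the form $U(f)=\beta+\alpha[f\circ\pi]^{\epsilon}$ for some $\beta\in\Lip(X_2)$, some unimodular constant $\alpha\in\mathbb{T}$, some surjective isometry $\pi\colon X_2\to X_1$, and some $\epsilon\in\{\pm1\}$; conversely, any map of this shape belongs to $\operatorname{Iso}((\Lip(X_1),\|\cdot\|_{\Sigma}), (\Lip(X_2),\|\cdot\|_{\Sigma}))$. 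These two inclusions together are exactly the equality \eqref{lipiso} for the norm $\|\cdot\|_{\Sigma}$, so the hypothesis of Corollary \ref{L} is fulfilled. (Observe in passing that $\|\cdot\|_{\Sigma}$ is complete, but Corollary \ref{L} is stated without any completeness requirement, so this causes no difficulty.)

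Then, since $T$ is 2-local in $\operatorname{Iso}((\Lip(X_1),\|\cdot\|_{\Sigma}), (\Lip(X_2),\|\cdot\|_{\Sigma}))$ and this set satisfies \eqref{lipiso}, Corollary \ref{L} applies verbatim and yields $T\in\operatorname{Iso}((\Lip(X_1),\|\cdot\|_{\Sigma}), (\Lip(X_2),\|\cdot\|_{\Sigma}))$, which is precisely the assertion.

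The only point demanding care, and hence the main (albeit modest) obstacle, is to match the bookkeeping of \cite[Theorem 6]{haoi} with the exact template in \eqref{lipiso}: one must confirm that the weight appearing there is a genuine unimodular scalar rather than merely a unimodular element of $\Lip(X_2)$, that the accompanying map is literally a surjective isometry between the metric spaces $X_2$ and $X_1$, and that the statement covers both the complex-linear case $\epsilon=1$ and the conjugate-linear case $\epsilon=-1$. Once these identifications are in place, no further argument is needed.
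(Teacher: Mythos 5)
Your proposal is correct and follows exactly the paper's route: the paper likewise justifies Corollary \ref{Lip2} by citing \cite[Theorem 6]{haoi} to confirm that $\operatorname{Iso}((\Lip(X_1),\|\cdot\|_{\Sigma}), (\Lip(X_2),\|\cdot\|_{\Sigma}))$ has the form required in \eqref{lipiso}, and then applies Corollary \ref{L}. Your extra remarks about matching the unimodular constant, the surjective isometry $\pi$, and both values of $\epsilon$ are the same bookkeeping the paper leaves implicit.
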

%\begin{proof}
%\cite[Theorem 6]{haoi} shows that $\operatorname{Iso}((\Lip(X_1),\|\cdot\|_{\Sigma}), (\Lip(X_2),\|\cdot\|_{\Sigma}))$ fulfills the condition of Corollary \ref{L}.
%\end{proof}

Corollary \ref{Lip2} generalizes Theorem 8 in \cite{haoi}, where the case $X_1=X_2=[0,1]$
 is proved.
%%%%%%%%%%%%%%%%%%%%%%%%%%%%%%%%%%%%%%%%%%%%%%%%%%%%%%%%%%%%%%%%%%%%%%%%%%%%%%%%%%%%%%%%%%%%%%%%%%%%%%%%%%%%%%%%%%%%%%%%
\subsection{The algebra of continuously differentiable functions} 
We denote the algebra of all continuously differentiable functions by $C^{1}([0,1])$. It is a unital semisimple commutative Banach algebra with the norm $\|\cdot\|_{\Sigma}$ defined by
\[
\|f\|_{\Sigma}=\|f\|_{\infty}+\|f'\|_{\infty}, \quad f \in C^{1}([0,1]).
\]
The maximal ideal space of $C^{1}([0,1])$ is homeomorphic to $[0,1]$. We have the following corollary.
%%%%%%%%%%%%%%%%%%%%%%%%%%%%%%%%%%%%%%%%%%%
\begin{cor}\label{C12}%pointwise
Let $\|\cdot\|_j$ be any norm on $C^1([0,1])$ for $j=1,2$. We do not assume that $\|\cdot\|_j$ is complete. Suppose that
\begin{multline}\label{1344}
\text{$\operatorname{Iso}((C^{1}([0,1]),\|\cdot\|_1), (C^{1}([0,1]),\|\cdot\|_{2}))$}\\
\text{$=\{T\in M(C^{1}([0,1]), C^{1}([0,1]));$}\\
\text{there exist  $\beta \in C^{1}([0,1])$, $\alpha \in \mathbb{T}$,}\\
%\text{an $\alpha \in \Lip(X_2) $ with $|\alpha|=1$ on $M_2$,}\\
\text{$\pi=\id$ or $\pi=1-\id$ and $\epsilon=\pm1$} \\
\text{such that $T(f)=\beta + \alpha [f\circ \pi]^\epsilon$ for every $f \in C^{1}([0,1])$\}}.
\end{multline}
Suppose that $T \in M(C^{1}([0,1]), C^{1}([0,1]))$ is 2-local in $\operatorname{Iso}((C^{1}([0,1]),\|\cdot\|_1), (C^{1}([0,1]),\|\cdot\|_{2}))$.
Then $T \in \operatorname{Iso}((C^{1}([0,1]),\|\cdot\|_1), (C^{1}([0,1]),\|\cdot\|_{2}))$.
\end{cor}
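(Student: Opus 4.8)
The plan is to follow the pattern of the proof of Corollary \ref{L}, but with a substantial simplification: in the present setting the only composition maps occurring among the elements of $\operatorname{Iso}((C^{1}([0,1]),\|\cdot\|_1),(C^{1}([0,1]),\|\cdot\|_2))$ are $\id$ and $1-\id$, so no uniqueness-set or surjectivity argument will be needed to identify $\pi$. First I would observe that \eqref{1344} gives $\operatorname{Iso}((C^{1}([0,1]),\|\cdot\|_1),(C^{1}([0,1]),\|\cdot\|_2))\subset\G$, so Theorem \ref{2} applies and yields continuous maps $\pi:[0,1]\to[0,1]$ and $\epsilon:[0,1]\to\{\pm1\}$ with
\[
T(f)=T(0)+(T(\1)-T(0))[f\circ\pi]^\epsilon,\qquad f\in C^{1}([0,1]),
\]
and $T(\1)-T(0)$ a unimodular function. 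Since $[0,1]$ is connected and $\epsilon$ is continuous with values in $\{\pm1\}$, $\epsilon$ is constant; thus $\epsilon\equiv1$ or $\epsilon\equiv-1$. Note also that $\pi$ maps $[0,1]$ into $[0,1]$, so $\pi$ is real-valued.

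Next I would feed two carefully chosen pairs into the 2-local hypothesis. Applying it to $(0,\1)$ produces $T_{0,\1}\in\operatorname{Iso}(\ldots)$, which by \eqref{1344} has the form $T_{0,\1}(f)=\beta_{0,\1}+\alpha_{0,\1}[f\circ\pi_{0,\1}]^{\epsilon_{0,\1}}$ with $\alpha_{0,\1}\in\mathbb{T}$; evaluating at $0$ and at $\1$ and using $[\1]^{\epsilon_{0,\1}}=\1$ gives $T(0)=\beta_{0,\1}$ and $T(\1)-T(0)=\alpha_{0,\1}\in\mathbb{T}$, so $T(\1)-T(0)$ is in fact a unimodular constant. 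Applying the 2-local hypothesis to $(0,\id)$ produces $T_{0,\id}\in\operatorname{Iso}(\ldots)$ with $\pi_{0,\id}\in\{\id,1-\id\}$ and $\alpha_{0,\id}\in\mathbb{T}$; since $\pi_{0,\id}$ is real-valued, $[\id\circ\pi_{0,\id}]^{\epsilon_{0,\id}}=\pi_{0,\id}$, and likewise $[\id\circ\pi]^\epsilon=\pi$ from the displayed formula, so comparing the two resulting expressions for $T(\id)-T(0)$ gives $(T(\1)-T(0))\,\pi=\alpha_{0,\id}\,\pi_{0,\id}$, that is, $\pi=\overline{T(\1)-T(0)}\,\alpha_{0,\id}\,\pi_{0,\id}$. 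The factor $c:=\overline{T(\1)-T(0)}\,\alpha_{0,\id}$ is a unimodular constant, and evaluating at a point where $\pi_{0,\id}>0$ (such a point exists since $\pi_{0,\id}$ equals $\id$ or $1-\id$) shows that $c=\pi/\pi_{0,\id}$ is real and nonnegative there, hence $c=1$ and $\pi=\pi_{0,\id}\in\{\id,1-\id\}$.

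Combining these facts, $T(f)=T(0)+(T(\1)-T(0))[f\circ\pi]^\epsilon$ with $T(0)\in C^{1}([0,1])$, $T(\1)-T(0)\in\mathbb{T}$, $\pi\in\{\id,1-\id\}$ and $\epsilon$ the constant $1$ or $-1$; this is precisely the description of the elements of $\operatorname{Iso}((C^{1}([0,1]),\|\cdot\|_1),(C^{1}([0,1]),\|\cdot\|_2))$ given in \eqref{1344}, so $T$ belongs to that set. The only step requiring a little care is the identification of $\pi$ at the end of the second paragraph: the key observations are that $\pi$ and $\pi_{0,\id}$ are both real-valued (so that the conjugation built into $[\,\cdot\,]^{\epsilon}$ is harmless when evaluated on $\id$) and that $\pi_{0,\id}$ is nonnegative and not identically zero (so that the leftover unimodular constant is forced to be $1$); everything else is a routine evaluation of the 2-local condition, and in particular none of the uniqueness-set or surjective-isometry machinery of Corollary \ref{L} is needed here.
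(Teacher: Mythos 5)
Your proposal is correct, but the way you pin down $\pi$ is genuinely different from the paper's argument. The paper follows the same template as its Lipschitz-algebra result (Corollary \ref{L}): it first uses that a 2-local isometry is an isometry, hence injective, invokes Corollary \ref{homeo} to see that $\pi([0,1])$ is a uniqueness set and therefore that $\pi$ is surjective, and then, for each $x_0$, tests the 2-local condition against the function $g(x)=x-\pi(x_0)$ to obtain $|\pi(z)-\pi(x_0)|=|z-x_0|$, i.e.\ that $\pi$ is an isometry of $[0,1]$ (hence $\id$ or $1-\id$). You instead exploit the rigidity of the hypothesis \eqref{1344}: a single application of 2-locality to the pair $(0,\id)$, together with the fact that $\pi_{0,\id}\in\{\id,1-\id\}$ is real-valued and nonnegative, forces the leftover unimodular constant to be $1$ and gives $\pi=\pi_{0,\id}$ outright. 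This is shorter, needs neither the injectivity of $T$ nor Corollary \ref{homeo}, and your handling of the conjugations and of the constant $c$ is sound (at a point with $\pi_{0,\id}>0$ one has $c=\pi/\pi_{0,\id}\in[0,\infty)$ and $|c|=1$, so $c=1$). What you lose is generality and parallelism: the paper's scheme of "uniqueness set $\Rightarrow$ surjectivity" plus a family of distance-type test functions is exactly what is needed in Corollary \ref{L}, where the admissible $\pi$'s form an infinite family of surjective isometries and cannot be identified by one test function; your shortcut works here precisely because \eqref{1344} allows only the two symbols $\id$ and $1-\id$. Both arguments end the same way, by matching the resulting form of $T$ (with $T(0)\in C^1([0,1])$, $T(\1)-T(0)\in\mathbb{T}$, constant $\epsilon$, and $\pi\in\{\id,1-\id\}$) against \eqref{1344}.
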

%%%%%%%%%%%%%%%%%%%%%%%%%%%%%%%%%%%%%%%%%%%%%%%%%%
\begin{proof}%Szeged
Let $T$ be 2-local in $\operatorname{Iso}((C^{1}([0,1]),\|\cdot\|_1), (C^{1}([0,1]),\|\cdot\|_{2}))$. By \eqref{1344}, $\operatorname{Iso}((C^{1}([0,1]),\|\cdot\|_1), (C^{1}([0,1]),\|\cdot\|_{2}))\subset \G$. Theorem \ref{2} asserts that there exists a continuous map $\pi: [0,1] \to [0,1]$ and a continuous map $\epsilon: [0,1] \to \{\pm1\}$ such that
\begin{equation}\label{5555}
T(f)=T(0)+(T(\1)-T(0))[f\circ \pi]^\epsilon, \quad f \in C^{1}([0,1]).
\end{equation}
Since $\epsilon: [0,1] \to \{\pm1\}$ is continuous and $[0,1]$ is connected, we conclude that $\epsilon=\pm1$. % Thus $T_1=\overline{T_0(\1)}T_0$ is 2-local in $\operatorname{Iso}((C^{1}([0,1]),\|\cdot\|_1), (C^{1}([0,1]),\|\cdot\|_{2}))$. 
As $T$ is a 2-local isometry, we get $T$ is an isometry. This implies that $T$ is injective. Corollary \ref{homeo} asserts that $\pi([0,1])$ is a uniqueness set for $C^{1}([0,1])$, which is $[0,1]$. Thus we have $\pi$ is surjective. To complete the proof we shall prove that $\pi$ is an isometry. Let $x_0 \in [0,1]$. We define the function $g(x)=x-\pi(x_0) \in C^{1}[0,1]$. Define $T_1=\overline{T_0(\1)}T_0$ for $T_0=T-T(0)$. It is easy to see that $T_0$ is 2-local in $\operatorname{Iso}((C^{1}([0,1]),\|\cdot\|_1), (C^{1}([0,1]),\|\cdot\|_{2}))$, we have 
\[
T_0(\1)=\beta_{0,\1}+\alpha_{0,\1}[\1 \circ \pi_{0,\1}]^{\epsilon_{0,\1}},
\]
and
\[
0=T_0(0)=\beta_{0,\1}+\alpha_{0,\1}[0 \circ \pi_{0,\1}]^{\epsilon_{0,\1}}=\beta_{0,\1}.
\]
It follows that $T(\1)-T(0)=T_0(\1)$ is a unimodular constant. %Since $T(\1)-T(0)=T_0(\1)$ is a unimodular constant,  
We have $T_1=\overline{T_0(\1)}T_0$ is 2-local in $\operatorname{Iso}((C^{1}([0,1]),\|\cdot\|_1), (C^{1}([0,1]),\|\cdot\|_{2}))$.  Hence we get
\begin{equation*}
\begin{split}
0=T_1(0)&=T_{0,g}(0)\\
&=\beta_{0,g}+\alpha_{0,g}[0 \circ \pi_{0,g}]^{\epsilon_{0,g}}=\beta_{0,g},
\end{split}
\end{equation*}
and
\begin{equation*}
\begin{split}
T_1(g)&=T_{0,g}(g)\\
&=\beta_{0,g}+\alpha_{0,g}[g \circ \pi_{0,g}]^{\epsilon_{0,g}}.
\end{split}
\end{equation*}
It follows that 
\[
(T_1(g))(z)=\alpha_{0,g}[g \circ \pi_{0,g}]^{\epsilon_{0,g}}(z)=\alpha_{0,g}[g(\pi_{0,g}(z))]^{\epsilon_{0,g}}, \quad z \in [0,1].
\]
Thus by \eqref{5555}, we have for any $z \in [0,1]$ that
\begin{multline*}
%\begin{equation}
[\pi(z)-\pi(x_0)]^{\epsilon}=[g(\pi(z))]^{\epsilon}=(T_1(g))(z)\\
=\alpha_{0,g}[g(\pi_{0,g}(z))]^{\epsilon_{0,g}}=\alpha_{0,g}[\pi_{0,g}(z)-\pi(x_0)]^{\epsilon_{0,g}},\end{multline*}
%=\alpha_{0,g}[z-x_0]^{\epsilon_{0,g}},
%\end{equation}
where $\alpha_{0,g} \in \mathbb{T}$ and $\pi_{0,g}=\id$ or $\pi_{0,g}=1-\id$. Putting $z=x_0$, we have 
\[
0=[\pi(x_0)-\pi(x_0)]^{\epsilon}=\alpha_{0,g}[\pi_{0,g}(x_0)-\pi(x_0)]^{\epsilon_{0,g}}.
\]
It follows that $\pi_{0,g}(x_0)=\pi(x_0)$. Thus we have 
\begin{multline*}
%\begin{equation}
[\pi(z)-\pi(x_0)]^{\epsilon}
=\alpha_{0,g}[\pi_{0,g}(z)-\pi(x_0)]^{\epsilon_{0,g}}=\alpha_{0,g}[\pi_{0,g}(z)-\pi_{0,g}(x_0)]^{\epsilon_{0,g}},
\end{multline*}
and
\[
|\pi(z)-\pi(x_0)|=|\pi_{0,g}(z)-\pi_{0,g}(x_0)|=|z-x_0|.
\]
As $z$ and $x_0$ are arbitrary, we conclude that $\pi$ is an isometry. This completes the proof. 
\end{proof}

In \cite{kkm,mt}, they gave the characterization for surjective isometries on $C^{1}([0,1])$ with respect to various norms. There are many norms with which the groups of surjective isometries on $C^{1}([0,1])$ fulfills the condition of Corollary \ref{C12}. We present one of them. 
%%%%%%%%%%%%%%%%%%%%%%%%%%%%%%%%%%%%%%%%%%%
\begin{cor}\label{c101}
Suppose that $T \in M((C^{1}([0,1]),\|\cdot\|_{\Sigma}), (C^{1}([0,1]),\|\cdot\|_{\Sigma}))$ and $T$ is 2-local in $\operatorname{Iso}((C^{1}([0,1]),\|\cdot\|_{\Sigma}), (C^{1}([0,1]),\|\cdot\|_{\Sigma}))$. We conclude that  $T \in \operatorname{Iso}((C^{1}([0,1]),\|\cdot\|_{\Sigma}), (C^{1}([0,1]),\|\cdot\|_{\Sigma}))$.
\end{cor}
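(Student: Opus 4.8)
The plan is to derive Corollary \ref{c101} as an immediate special case of Corollary \ref{C12}, so the only real work is to verify that the group $\operatorname{Iso}((C^{1}([0,1]),\|\cdot\|_{\Sigma}), (C^{1}([0,1]),\|\cdot\|_{\Sigma}))$ actually satisfies the structural hypothesis \eqref{1344}. First I would recall the known description of surjective isometries of $C^{1}([0,1])$ with respect to the norm $\|f\|_{\Sigma}=\|f\|_{\infty}+\|f'\|_{\infty}$, which is available in the literature cited in the paper (\cite{kkm,mt}): every such isometry $U$ has the form $U(f)=\beta+\alpha[f\circ\pi]^{\epsilon}$ where $\alpha\in\mathbb{T}$ is a unimodular constant, $\epsilon\in\{\pm 1\}$, $\beta\in C^{1}([0,1])$, and $\pi$ is either the identity $\id$ or the flip $1-\id$ on $[0,1]$. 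In other words, the composition part must be one of the only two isometric self-maps of the interval that respect the differentiable structure.

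Once that description is in hand, the verification is essentially a matter of matching it against the right-hand side of \eqref{1344}: I would observe that the set of $U$ just described is exactly the set $\{T\in M(C^{1}([0,1]),C^{1}([0,1])): \exists\,\beta\in C^{1}([0,1]),\ \alpha\in\mathbb{T},\ \pi\in\{\id,1-\id\},\ \epsilon=\pm1,\ T(f)=\beta+\alpha[f\circ\pi]^{\epsilon}\}$. This gives the inclusion of the isometry group into $\G$ and shows that the hypothesis \eqref{1344} of Corollary \ref{C12} holds with $\|\cdot\|_1=\|\cdot\|_2=\|\cdot\|_{\Sigma}$. At that point Corollary \ref{C12} applies verbatim: if $T\in M((C^{1}([0,1]),\|\cdot\|_{\Sigma}),(C^{1}([0,1]),\|\cdot\|_{\Sigma}))$ is $2$-local in $\operatorname{Iso}((C^{1}([0,1]),\|\cdot\|_{\Sigma}),(C^{1}([0,1]),\|\cdot\|_{\Sigma}))$, then $T$ itself lies in that isometry group, which is precisely the assertion of Corollary \ref{c101}.

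The main obstacle, such as it is, is not in the $2$-local machinery at all — Corollary \ref{C12} already does that heavy lifting — but rather in confirming the exact shape of the surjective isometry group for the $\|\cdot\|_{\Sigma}$ norm, and in particular that the only admissible composition maps $\pi$ are $\id$ and $1-\id$. This is a classical computation (one shows that an isometry fixes, up to the flip, the endpoints of $[0,1]$ because of how the sup-norm of the derivative interacts with the sup-norm of the function, and then that $\pi$ must be an isometry of $[0,1]$ in the Euclidean metric that is also a diffeomorphism, forcing $\pi(x)=x$ or $\pi(x)=1-x$), but I would simply cite \cite{kkm,mt} rather than reproduce it. So the proof is short: invoke the cited characterization, note it matches \eqref{1344}, and apply Corollary \ref{C12}.
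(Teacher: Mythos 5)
Your proposal is correct and matches the paper's own route: the paper likewise treats Corollary \ref{c101} as an immediate instance of Corollary \ref{C12}, noting that the characterization of $\operatorname{Iso}((C^{1}([0,1]),\|\cdot\|_{\Sigma}),(C^{1}([0,1]),\|\cdot\|_{\Sigma}))$ from \cite{kkm,mt} (isometries of the form $\beta+\alpha[f\circ\pi]^{\epsilon}$ with $\alpha\in\mathbb{T}$, $\pi\in\{\id,1-\id\}$, $\epsilon=\pm1$) verifies hypothesis \eqref{1344}. Citing that characterization rather than reproving it is exactly what the paper does, so no gap remains.
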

Corollary \ref{c101} has been shown in \cite[Theorem 9]{haoi} in a different way.
%%%%%%%%%%%%%%%%%%%%%%%%%%%%%%%%%%%%%%%%%%%%%%%
\subsection{The algebra $\Sd$}\label{Sd}
Let 
\[
\Sd=\{f \in H(\mathbb{D}); f' \in H^{\infty}(\mathbb{D})\},
\]
where $H(\mathbb{D})$ is the linear space of all analytic functions on $\mathbb{D}$ and $H^{\infty}(\mathbb{D})$ is the algebra of all bounded analytic functions on $\mathbb{D}$. The algebra $\Sd$  equipped with the norm $\|f\|_{\Sigma}=\sup_{z \in \mathbb{D}}|f(z)|+\sup_{w \in \mathbb{D}}|f'(w)|$ for $f \in \Sd$ is a unital semisimple commutative Banach algebra. As is described in \cite{mi}, $\Sd$ coincides with the space of all Lipschitz functions in the linear space of all analytic functions on ${\mathbb D}$ and each $f\in \Sd$ is continuously extended to the closed unit disk $\bar{\mathbb D}$. Hence we may suppose that $\Sd$ is a unital subalgebra of the disk algebra on $\bar{\mathbb D}$. Trivially all analytic polynomials are in $\Sd$. 
%The maximal ideal space of $S^{\infty}({\mathbb D})$ is $\bar{\mathbb D}$. 
%%%%%%%%%%%%%%%%%
\begin{theorem}
The maximal ideal space $M_\infty$ of $\Sd$ is homeomorphic to the closed unit disk $\bar{\mathbb D}$.
\end{theorem}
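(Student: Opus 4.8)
The plan is to show that the evaluation map is the required homeomorphism. For $\lambda\in\bar{\mathbb D}$ let $\iota(\lambda):\Sd\to\C$ be the functional $\iota(\lambda)(f)=f(\lambda)$, where we use the continuous extension of each $f\in\Sd$ to $\bar{\mathbb D}$ afforded by the identification of $\Sd$ with a subalgebra of the disk algebra on $\bar{\mathbb D}$. Each $\iota(\lambda)$ is a nonzero multiplicative linear functional, so $\iota:\bar{\mathbb D}\to M_\infty$ is well defined. It is injective, because the coordinate function $Z$, $Z(z)=z$, belongs to $\Sd$ and $\iota(\lambda)(Z)=\lambda$; and it is continuous for the Gelfand topology on $M_\infty$, because $\lambda_\alpha\to\lambda$ in $\bar{\mathbb D}$ forces $f(\lambda_\alpha)\to f(\lambda)$ for every $f\in\Sd$ by continuity of $f$ on $\bar{\mathbb D}$. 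Since $\bar{\mathbb D}$ is compact and $M_\infty$ is Hausdorff, it therefore suffices to prove that $\iota$ is surjective; it will then automatically be a homeomorphism.

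So let $\varphi\in M_\infty$, that is, $\varphi:\Sd\to\C$ a nonzero multiplicative linear functional, and put $\lambda=\varphi(Z)$. I first claim $\lambda\in\bar{\mathbb D}$: indeed $\varphi(Z)$ lies in the spectrum $\sigma(Z)$ of $Z$ in $\Sd$, and whenever $|\mu|>1$ the function $(\mu-z)^{-1}$ is analytic and bounded on $\mathbb D$ with bounded derivative, hence lies in $\Sd$ and is the inverse of $\mu\1-Z$, so $\sigma(Z)\subseteq\bar{\mathbb D}$. The decisive step is then to prove that $\varphi(f)=f(\lambda)$ for every $f\in\Sd$, which gives $\varphi=\iota(\lambda)$.

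Fix $f\in\Sd$ and set $h=f-f(\lambda)\1\in\Sd$, so that $h(\lambda)=0$. Since $h'\in H^{\infty}(\mathbb D)$ and $\mathbb D$ is convex, $h$ is Lipschitz on $\bar{\mathbb D}$ with constant $\|h'\|_{\infty}$, and hence $|h(z)|\le\|h'\|_{\infty}|z-\lambda|$ for all $z\in\bar{\mathbb D}$. It follows that $k:=h^{2}/(Z-\lambda\1)$ is analytic on $\mathbb D$ — when $\lambda\in\mathbb D$ the singularity at $\lambda$ is removable because $h^{2}$ vanishes there to order at least two, and when $|\lambda|=1$ the denominator has no zero in $\mathbb D$ — and $|k(z)|\le\|h'\|_{\infty}^{2}|z-\lambda|$ is bounded on $\mathbb D$. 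Differentiating the identity $h^{2}=(Z-\lambda\1)k$ gives $k'=(2hh'-k)/(Z-\lambda\1)$, and the same linear bound on $h$ yields $|k'(z)|\le 3\|h'\|_{\infty}^{2}$ on $\mathbb D$; therefore $k'\in H^{\infty}(\mathbb D)$ and $k\in\Sd$. Applying the homomorphism $\varphi$ to $h^{2}=(Z-\lambda\1)k$ gives $\varphi(h)^{2}=(\varphi(Z)-\lambda)\varphi(k)=0$, so $\varphi(h)=0$, i.e.\ $\varphi(f)=f(\lambda)$, as wanted.

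The part I expect to be the main obstacle is the boundary case $|\lambda|=1$: the naive factorization $f-f(\lambda)\1=(Z-\lambda\1)g$ cannot be used directly, since $g=h/(Z-\lambda\1)$, although bounded, may have unbounded derivative as $z\to\lambda$, so $g$ need not lie in $\Sd$. Passing to $h^{2}$ before dividing is exactly what removes this: the extra order of vanishing cancels the boundary singularity of $1/(Z-\lambda\1)$, while the estimate $|h(z)|\le\|h'\|_{\infty}|z-\lambda|$, which is free once $h'\in H^{\infty}(\mathbb D)$, keeps both $k$ and $k'$ bounded on $\mathbb D$. The remaining ingredients — that a nonzero multiplicative linear functional $\varphi$ on a unital Banach algebra satisfies $\varphi(a)\in\sigma(a)$, that $M_\infty$ is compact Hausdorff, and the identification of $\Sd$ with a subalgebra of $A(\bar{\mathbb D})$ recorded above — are standard.
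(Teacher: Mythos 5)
Your proof is correct, and it takes a genuinely different route from the paper. The paper does not classify the characters directly: it reduces the problem to the disk algebra by a finite-generation (corona-type) criterion. Namely, given $f_1,\dots,f_n\in\Sd$ with no common zero on $\bar{\mathbb D}$, it picks $h_1,\dots,h_n\in A(\bar{\mathbb D})$ with $\sum f_jh_j=1$ (using that the maximal ideal space of $A(\bar{\mathbb D})$ is $\bar{\mathbb D}$), approximates the $h_j$ uniformly by polynomials so that $F=\sum f_jp^{(j)}_{m_0}$ satisfies $\|1-F\|_\infty<1/2$, observes that $F$ is then zero-free on $\bar{\mathbb D}$ with $1/F\in\Sd$, and sets $g_j=p^{(j)}_{m_0}/F$; a standard general result about finitely many functions generating the unit ideal then yields $M_\infty=\bar{\mathbb D}$. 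You instead show directly that every character $\varphi$ is evaluation at $\lambda=\varphi(Z)\in\sigma(Z)\subseteq\bar{\mathbb D}$, via the factorization $h^2=(Z-\lambda\1)k$ with $h=f-f(\lambda)\1$, where squaring before dividing is exactly what keeps $k$ and $k'$ bounded when $\lambda$ lies on the boundary (the Lipschitz bound $|h(z)|\le\|h'\|_\infty|z-\lambda|$ doing the work), so $k\in\Sd$ and $\varphi(h)^2=0$. The paper's argument is shorter because it borrows the known structure of $A(\bar{\mathbb D})$ and the invertibility-perturbation step, but it leans on the unproved ``general result'' about finitely generated ideals; your argument is self-contained, uses only $\varphi(a)\in\sigma(a)$ and elementary estimates, explicitly handles the delicate boundary case, and would transfer to other algebras defined by boundedness of derivatives. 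Your reduction of the homeomorphism statement to surjectivity of the evaluation embedding (continuous bijection from a compact space to a Hausdorff space) is also spelled out more carefully than in the paper, where it is left implicit.
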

%%%%%%%%%%%%%%%%%%%%%%%%%%%%%%%%%%%%%%
\begin{proof}
For each $p\in \bar{\mathbb D}$, the point evaluation on $\Sd$ which takes the value at $p$ is a nontrivial complex homomorphism. Hence we may suppose that $\bar{\mathbb D}\subset M_\infty$. To prove $\bar{\mathbb D}=M_\infty$, suppose that $f_1,\dots, f_n$ be an arbitrary finite number of functions in $\Sd$ such that
\[
\text{$\sum_{j=1}^n|f_j|>0$ on $\bar{\mathbb D}$}.
\]
If we prove that there exist the same number of $g_1,\dots, g_n\in \Sd$ such that 
\[
\sum_{j=1}^nf_jg_j=1,
\]
then a general result assures that $\bar{\mathbb D}=M_\infty$. We prove the existence of such $g_1,\dots, g_n \in \Sd$. It is well known that  the maximal ideal space of the disk algebra $A(\bar{\mathbb D})$ is $\bar{\mathbb D}$. As $f_1,\dots, f_n\in \Sd\subset A(\bar{\mathbb D})$, there exists $h_1,\dots, h_n\in A(\bar{\mathbb D})$ such that 
\[
\sum_{j=1}^nf_jh_j=1.
\]
As functions in $A(\bar{\mathbb D})$ are uniformly approximated by analytic polynomials, there exists a sequence of polynomials $\{p_m^{(j)}\}_{m=1}^\infty$ such that $\|p_m^{(j)}-h_j\|_{\infty}\to 0$ as $m\to \infty$ for every $j=1,\dots, n$. Hence for sufficiently large $m_0$ we have
\[
\left\|1-\sum_{j=1}^nf_jp_{m_0}^{(j)}\right\|<1/2.
\]
In particular, $\sum_{j=1}^nf_jp_{m_0}^{(j)}$ has no zeros on $\bar{\mathbb D}$. Then  $1/\sum_{j=1}^nf_jp_{m_0}^{(j)}\in \Sd$. Put $g_j=p_{m_0}^{(j)}/\sum_{j=1}^nf_jp_{m_0}^{(j)}$ for $j=1,\dots, n$. Then $g_j\in \Sd$ and $\sum_{j=1}^nf_jg_j=1$ by a simple calculation. It follows that $\bar{\mathbb D}=M_\infty$.
\end{proof}

Miura \cite[Theorem1]{mi} showed the form of the surjective isometry on $\Sd$. 
%%%%%%%%%%%%%%%%%%%%%%%%%%%%%%%%%%%%%%%%%%%%%%%%%%%%%%%
\begin{theorem}[Miura \cite{mi}]\label{miura}
Suppose that $U:\Sd \to \Sd$ is a surjective isometry with respect to the norm $\|\cdot\|_{\Sigma}$. Then there exists unimodular constants $\alpha,\lambda\in {\mathbb C}$ such that 
\[
U(f)=U(0)+\alpha f(\lambda\cdot),\quad f\in \Sd
\]
or
\[
U(f)=U(0)+\alpha \overline{f(\overline{\lambda\cdot}}),\quad f\in \Sd.
\]
Conversely, each of the above form is a surjective isometry from $\Sd$ onto $\Sd$.
\end{theorem}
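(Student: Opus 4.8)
I would first dispose of the converse direction, which is a routine computation: for $|\lambda|=1$ the substitution $f\mapsto f(\lambda\,\cdot)$ preserves $\sup_{\D}|f|$ and, since $\tfrac{d}{dz}f(\lambda z)=\lambda f'(\lambda z)$, preserves $\sup_{\D}|f'|$ as well; the conjugate version $f\mapsto\overline{f(\overline{\lambda\,\cdot})}$ is analytic in $z$ (if $f=\sum a_nz^n$ then $\overline{f(\overline{\lambda z})}=\sum\overline{a_n}\lambda^nz^n$) and the same coefficient bookkeeping shows it preserves $\|\cdot\|_\Sigma$; multiplying by a unimodular constant and adding the fixed element $U(0)$ changes nothing. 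So only the forward implication needs work. I would put $V:=U-U(0)$, a surjective isometry of $(\Sd,\|\cdot\|_\Sigma)$ with $V(0)=0$; by the Mazur--Ulam theorem $V$ is real-linear, so it suffices to show that a real-linear surjective isometry $V$ of $\Sd$ equals $\alpha\,f(\lambda\,\cdot)$ or $\alpha\,\overline{f(\overline{\lambda\,\cdot})}$ for some $|\alpha|=|\lambda|=1$.

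The plan for the forward implication is to study $V$ through its transpose $V^{*}$, which is again a real-linear surjective isometry of $\Sd^{*}$ and hence maps the extreme points of the closed unit ball of $\Sd^{*}$ bijectively onto themselves. Step one is to describe those extreme points; using the isometric embedding $f\mapsto(f,f')$ of $\Sd$ into $C(\bar{\D})\oplus_1 H^{\infty}(\D)$ one expects them to fall into two ``strata'', the evaluations $f\mapsto\gamma f(p)$ (and $f\mapsto\gamma\overline{f(p)}$) with $|\gamma|=1$, $p\in\bar{\D}$, and the derivative evaluations $f\mapsto\gamma f'(p)$ (and their conjugates) with $|\gamma|=1$, $p\in\D$. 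Step two: the relations among these functionals forced by analyticity, together with the matching of their norms, should show that $V^{*}$ cannot interchange the two strata and acts on the first one by a self-homeomorphism $\pi$ of $\bar{\D}$, so that $V(f)=\gamma\,[f\circ\pi]^{\epsilon}$ with $\gamma\in\Sd$ unimodular and $\epsilon:\bar{\D}\to\{\pm1\}$ continuous; since $\bar{\D}$ is connected, $\epsilon\equiv1$ or $\epsilon\equiv-1$, the two cases being separated exactly as in the proof of Theorem~\ref{gks} by whether $V(i\1)=iV(\1)$ or $V(i\1)=-iV(\1)$. Evaluating at $f=\1$ gives $V(\1)=\gamma$, and $|\gamma|\equiv1$ on $\bar{\D}$ forces $\gamma$ to be a unimodular constant by the maximum modulus principle; the conjugate-linear case reduces to the complex-linear one by composing $V$ with the conjugate-linear surjective isometry $\sharp$ of $\Sd$ given by $f^{\sharp}(z)=\overline{f(\bar z)}$. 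Step three: in the complex-linear case $V(f)=\gamma\,f\circ\pi$, so $\pi=\overline{\gamma}\,V(\id)\in\Sd$ is analytic, hence—being a homeomorphism of $\bar{\D}$—a M\"obius transformation; finally, testing $f=\id$ in ``$f\mapsto f\circ\pi$ preserves $\sup_{\D}|f'|$'' gives $\sup_{\D}|\pi'|=1$, and an easy estimate for a disk automorphism $\pi(z)=e^{i\theta}\tfrac{z-a}{1-\bar a z}$ (where $\sup_{\D}|\pi'|=\tfrac{1+|a|}{1-|a|}$) then forces $a=0$, i.e.\ $\pi$ is a rotation $z\mapsto\lambda z$, $|\lambda|=1$, which yields the stated form.

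I expect the main obstacle to be Step one together with the stratum-separation in Step two: the dual of $H^{\infty}(\D)$ is large and carries many singular functionals, so one must either show that no ``boundary-type'' functional on $f'$ survives as an extreme point of $\Sd^{*}$, or incorporate such functionals and still argue that $V^{*}$ respects the splitting, and one must rule out any interchange of the $f$-evaluations with the $f'$-evaluations. A secondary subtlety is that analytic polynomials are not $\|\cdot\|_\Sigma$-dense in $\Sd$ (the derivatives of its members exhaust $H^{\infty}(\D)$, not merely the disk algebra), so the intermediate steps—verifying that $V$ is genuinely a weighted composition and that $\pi$ is a homeomorphism—must be carried out on all of $\Sd$, or via a dense subclass closed under the relevant operations, rather than reduced to polynomials.
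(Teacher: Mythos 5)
The paper itself does not prove this statement: Theorem \ref{miura} is imported verbatim from Miura's preprint \cite{mi}, so there is no in-paper argument to compare with, and the only question is whether your sketch stands on its own. It does not, and the gap sits exactly where you flag it, in Steps one and two — but the difficulty is worse than you indicate. For the sum norm $\|f\|_{\Sigma}=\|f\|_{\infty}+\|f'\|_{\infty}$ the extreme points of the dual unit ball do \emph{not} split into the two ``pure'' strata you describe. A pure evaluation $f\mapsto\gamma f(p)$ has dual norm one, but so does $f\mapsto\gamma f(p)\pm\gamma'\,\partial_q(f)$ for any $q\in\D$ and unimodular $\gamma'$, where $\partial_q(f)=f'(q)$, since $|f(p)|+|f'(q)|\le\|f\|_{\Sigma}$; hence $\gamma\delta_p=\tfrac12\bigl[(\gamma\delta_p+\gamma'\partial_q)+(\gamma\delta_p-\gamma'\partial_q)\bigr]$ exhibits it as the midpoint of two norm-one functionals, so it is not extreme, and the same argument kills the pure derivative evaluations. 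The extreme points are (among) the combined functionals $f\mapsto\gamma_1 f(p)+\gamma_2\Lambda(f')$ with $\Lambda$ an extreme point of the unit ball of $H^{\infty}(\D)^{*}$, and that latter set is not exhausted by evaluations at points of $\D$ (it contains functionals living on the corona and the Shilov boundary of $H^{\infty}$). Consequently the pivotal claims of your Step two — that $V^{*}$ preserves a stratification and induces a self-homeomorphism $\pi$ of $\bar{\D}$ yielding $V(f)=\gamma[f\circ\pi]^{\epsilon}$ — are asserted for an extreme-point picture that is false as stated; the real work (determining which combined functionals are extreme, showing the adjoint respects the evaluation components, and disposing of the singular part of $H^{\infty}(\D)^{*}$) is precisely the nontrivial content of \cite{mi}, and nothing in your sketch supplies it.

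Everything surrounding that hole is sound: the converse computation, the Mazur--Ulam reduction to a real-linear $V$ with $V(0)=0$, the reduction of the conjugate-linear case via $f\mapsto\overline{f(\bar z)}$, and the endgame (maximum modulus forcing the unimodular weight to be constant, connectedness of $\bar{\D}$ forcing $\epsilon$ constant, and $\sup_{\D}|\pi'|=\frac{1+|a|}{1-|a|}$ for a disk automorphism, which together with $\|\pi\|_{\Sigma}=\|\id\|_{\Sigma}=2$ forces $a=0$, i.e.\ $\pi(z)=\lambda z$) would all go through once the weighted-composition form with $\pi$ a homeomorphism of $\bar{\D}$ is actually established. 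As written, however, the proposal is a plausible program whose central step is missing, not a proof.
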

%%%%%%%%%%%%%%%%%%%%%%%%%%%%%%%%%%%%%%%%%%%%%%%%%%%%%%%
%There is a map pointwise 2-local map in $\isonly(B_1,B_2)$ which is not in $\isonly(B_1,B_2)$. The map $T$ defined by $(T(f))(t)=(\exp(it))f(t)$ is pointwise 2-local in $\isonly(C^1[0,1],C^1[0,1])$ (resp. $\isonly(\operatorname{Lip}[0,1],\operatorname{Lip}[0,1])$). We also have that $T:A(\bar{\mathbb D})\to A(\bar{\mathbb D})$ defined by $(T(f))(z)=f(z^2)$ is pointwise 2-local in $\isonly(A(\bar{\mathbb D}),A(\bar{\mathbb D})$. 
As we stated in the beginning of Section \ref{GWC}, for some Banach algebras $B_j$, a pointwise  2-local map in $\Iso(B_1,B_2)$ is not always a  surjective isometry. But applying Theorem \ref{2} and  Theorem \ref{miura} we deduce that a pointwise 2-local map in $\Iso(\Sd,\Sd)$ is always a surjective isometry.

%%%%%%%%%%%%%%%%%%%%%%%%%%%%%%%%%%%%%%%%%%%%%%%%
\begin{cor}\label{S}
Suppose that $T \in M(\Sd,\Sd)$ is pointwise 2-local in $\Iso(\Sd,\Sd)$. Then  $T \in \Iso(\Sd,\Sd)$.
\end{cor}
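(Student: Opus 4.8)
The plan is to combine Miura's description of the surjective isometries of $\Sd$ (Theorem \ref{miura}) with Theorem \ref{2}. First I would note that $\Iso(\Sd,\Sd)\subset\G$: by Theorem \ref{miura} every $U\in\Iso(\Sd,\Sd)$ has the form $U(f)=U(0)+\alpha f(\lambda\cdot)$ or $U(f)=U(0)+\alpha\overline{f(\overline{\lambda\cdot})}$ for unimodular constants $\alpha,\lambda$, and each of these fits the template defining $\G$: take $\beta=U(0)\in\Sd$, the constant $\alpha$ (which is unimodular on $M_\infty=\bar{\mathbb D}$), the continuous self-map of $\bar{\mathbb D}$ given by $\pi(z)=\lambda z$ in the first case and $\pi(z)=\overline{\lambda z}$ in the second, and $\epsilon\equiv 1$ (resp. $\epsilon\equiv -1$). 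Hence $T$ is pointwise 2-local in $\G$, and Theorem \ref{2} supplies a continuous $\pi:\bar{\mathbb D}\to\bar{\mathbb D}$ and a continuous $\epsilon:\bar{\mathbb D}\to\{\pm1\}$ with $T(f)=T(0)+(T(\1)-T(0))[f\circ\pi]^\epsilon$ for all $f\in\Sd$, the element $c:=T(\1)-T(0)$ being unimodular. Since $\bar{\mathbb D}$ is connected, $\epsilon$ is the constant $1$ or the constant $-1$; and since $c\in\Sd\subset A(\bar{\mathbb D})$ is unimodular, the maximum modulus principle forces $c$ to be a unimodular constant.

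Next I would pin down $\pi$. Fix $x\in\bar{\mathbb D}$ and apply pointwise 2-locality to the trio $\id,0,x$: there is a surjective isometry $U_x\in\Iso(\Sd,\Sd)$ with $U_x(\id)(x)=T(\id)(x)$ and $U_x(0)(x)=T(0)(x)$. Writing $U_x$ in the form given by Theorem \ref{miura}, one computes $U_x(\id)-U_x(0)=\alpha_x\lambda_x\,\id$ with $\alpha_x,\lambda_x$ unimodular, so that $T(\id)(x)-T(0)(x)=\alpha_x\lambda_x x$, whence $|T(\id)(x)-T(0)(x)|=|x|$. On the other hand the formula from Theorem \ref{2} with $f=\id$ reads $[\pi(x)]^\epsilon=\overline{c}(T(\id)(x)-T(0)(x))$. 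Comparing, $|\pi(x)|=|x|$ for every $x\in\bar{\mathbb D}$; in particular $\pi(0)=0$.

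Finally I would treat the two values of $\epsilon$ separately. If $\epsilon\equiv 1$, then $T(\id)=T(0)+c\pi$, so $\pi=\overline{c}(T(\id)-T(0))\in\Sd$ is analytic on $\mathbb D$; the function $z\mapsto\pi(z)/z$ has a removable singularity at $0$, so extends to an analytic function on $\mathbb D$ of constant modulus $1$ (using $|\pi(z)|=|z|$ and continuity), hence equals a unimodular constant $\lambda$ by the open mapping theorem. Thus $\pi(z)=\lambda z$ and $T(f)=T(0)+cf(\lambda\cdot)$, which is a surjective isometry by the converse part of Theorem \ref{miura}. If $\epsilon\equiv -1$, then $\overline{\pi}=\overline{c}(T(\id)-T(0))\in\Sd$, and the same argument applied to $\overline{\pi}$ gives $\overline{\pi}(z)=\lambda z$ with $|\lambda|=1$, so $\pi(z)=\overline{\lambda z}$ and $T(f)(z)=T(0)(z)+c\,\overline{f(\overline{\lambda z})}$, again a surjective isometry by Theorem \ref{miura}. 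In either case $T\in\Iso(\Sd,\Sd)$. The only step needing genuine care is the middle one: pointwise 2-locality must be fed the pair consisting of $\id$ \emph{together with} $0$ so that the unknown translation term of $U_x$ cancels, leaving just the rotation; once that is arranged, the rigidity of analytic maps with $|\pi|=|\id|$ does the rest, and no essential difficulty remains.
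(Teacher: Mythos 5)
Your proposal is correct and follows essentially the same route as the paper: Miura's theorem gives $\Iso(\Sd,\Sd)\subset\G$, Theorem \ref{2} plus connectedness and the maximum modulus principle reduce $T$ to $T(0)+c[f\circ\pi]^{\epsilon}$ with $c$ a unimodular constant and $\epsilon$ constant, pointwise 2-locality applied to the pair $(\id,0)$ yields $|\pi(x)|=|x|$ and $\pi(0)=0$, and analytic rigidity (your removable-singularity/open-mapping argument is just the Schwarz lemma equality case the paper invokes) forces $\pi$ to be a rotation or conjugate rotation, after which the converse of Miura's theorem finishes. No gaps.
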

%%%%%%%%%%%%%%%%%%%%%%%%%%%%%%%%%%%%%%%%%%%%%
\begin{proof}
Let $T \in M(\Sd,\Sd)$ be a pointwise 2-local map in $\Iso(\Sd,\Sd)$. By Theorem \ref{miura} $\Iso(\Sd,\Sd)\subset \G$. Then Theorem \ref{2} asserts that there exist a continuous map $\pi:\bar{\mathbb D}\to \bar{\mathbb D}$ and a continuous map $\epsilon:\bar{\mathbb D}\to \{\pm 1\}$ such that 
\[
T(f)=T(0)+\alpha[f\circ\pi]^{\epsilon},\quad f\in \Sd,
\]
where $\alpha=T(\1)-T(0)$ is a unimodular constant since $T(\1)-T(0)$ is unimodular function and it is analytic on ${\mathbb D}$. Furthermore $\epsilon=1$ on $\bar{\mathbb D}$ or $\epsilon=-1$ on $\bar{\mathbb D}$. Put $T_1=\bar\alpha(T-T(0))$. Then 
\[
T_1=f\circ \pi,\quad f\in \Sd
\]
if $\epsilon=1$, and 
\[
T_1(f)=\overline{f\circ \pi}, \quad f\in \Sd
\]
if $\epsilon=-1$. Letting $f=\id$, the identity function, we see that 
$\pi\in \Sd$ if $\epsilon=1$ and $\bar\pi\in \Sd$  if $\epsilon=-1$. Put $\varphi =\pi$ if $\epsilon=1$, and $\varphi=\bar\pi$ if $\epsilon=-1$. Then we have that $\varphi\in \Sd$ and 
\[
T_1(f)=f\circ \varphi, \quad f\in \Sd
\]
if $\epsilon =1$, and 
\[
T_1(f)=\overline{f\circ \bar{\varphi}},\quad f\in \Sd
\]
if $\epsilon= -1$. 
In particular, we have 
\begin{equation}\label{t1}
T_1(\id)=\varphi
\end{equation}
either for $\epsilon=1$ and for $\epsilon=-1$. 
Since $T_1$ is pointwise 2-local in $\Iso(\Sd,\Sd)$ by the definition of $T_1$, for every $x\in \bar{\mathbb D}$ there exists $u_x\in \Sd$ and unimodular constant $\alpha_x, \lambda_x$ such that 
\[
(T_1(\id))(x)=u_x(x)+\alpha_x\id(\lambda_xx)
\]
and 
\[
0=(T_1(0))(x)=u_x(x),
\]
or
\[
(T_1(\id))(x)=u_x(x)+\alpha_x\overline{\id(\overline{\lambda_xx})}=u_x(x)+\alpha_x\id(\lambda_xx)
\]
and 
\[
0=(T_1(0))(x)=u_x(x).
\]
In any case we have
\begin{equation}\label{t11}
(T_1(\id))(x)=\alpha_x\lambda_xx.
\end{equation}
Combining \eqref{t1} and \eqref{t11} we have
\[
\varphi(x)=\alpha_x\lambda_xx
\]
 for every $x\in \bar{\mathbb D}$. 
Then we have $\varphi(0)=0$, and $|\varphi(x)|=|x|$ for every $x\in \bar{\mathbb D}$.
Since $\varphi:\bar{\mathbb D}\to \bar{\mathbb D}$ is analytic in ${\mathbb D}$,    the Schwartz lemma asserts that there is a unimodular constant $\lambda_0$ such that\[
\varphi(x)=\lambda_0x,\quad x\in \bar{\mathbb D}.
\]
It follows that 
\[
T(f)=T(0)+(T(\1)-T(0))f(\lambda_0\cdot),\quad f\in \Sd
\]
or
\[
T(f)=T(0)+(T(\1)-T(0))\overline{f(\overline{\lambda_0\cdot})},\quad f\in \Sd.
\]
By Theorem \ref{miura} we conclude that $T\in \Iso(\Sd,\Sd)$.
\end{proof}
%%%%%%%%%%%%%%%%%%%%%%%%%%%%%%%%%%%%%%%%%%%%%%%%%%%%%%%%%%%%%%%%%%%%%%%%%%%%%%%%%%%%%%%%%%%%%%%%%%%%%%%%%%%%%%%%%%%%%%%%%%%%%%%%%%%%%%%%%%%%%%%%%%%%%%%%%%%%%%%%%%%%%%%%%%%%%%%%%%%%%%%%%%%%%%%%%%%%%%%%%%%%%%%%%%%%%%%%%%%%%%%%%%%%%%%%%%%%%%%%%%%%%%%%%%%%%%%%%%%%%%%%%%%%%%%%%%%%%%%%%%%%%%%%%%%%%%%%%%%%%%%%%%%%%%%%%%%
\section{Iso-reflexivity}
%%%%%%%%%%%%%%%%%%%%%%%%%%%%%%%%%%%%%%%%%%%%%%%%%%%%%%%%%%%%%%%%%%%%%%%%%%%%%%%%%%%%%%%%%%%%%%%%%%%%%%%%%%%%%%%%%%%%%%%%%%%%%%%%%%%%%%%%%%%%%%%%%%%%%%%%%%%%%%%%%%%%%%%%%%%%%%%%%%%%%%%%%%%%%%%%%%%%%%%%%%%%%%%%%%%%%%%%%%%%%%%%%%%%%%%%%%%%%%%%%%%%%%%%%%%%%%%%%%%%%%%%%%%%%%%%%%%%%%%%%%%%%%%%%%%%%%%%%%%%%%%%%%%%%%%%%%%
Many literatures study isometries from the point of view
of how they are determined by their local actions \cite{bamol,cs,jcv,molarch,molgyo,molzal,so}.
%Batty and Moln\'ar \cite{bamol}, Moln\'ar \cite{molarch}, Moln\'ar and Gy\"ory \cite{molgyo} and Moln\'ar and Zalar \cite{molzal} initiated to study 
%Jim\'enez-Vargas, Morales Compoy and Villegas-Vallecillos \cite{jcv}
%Oi \cite{so}
%Cabbello S\'anchez \cite{cs}
By Theorem \ref{2} we have that several 2-local maps are linear, hence they are local maps.
In this section we prove that a local isometry in $\isonlyc(B_1,B_2)$ is 2-local in $\isonly(B_1,B_2)$. Applying Theorem we see the reflexivity of $\isonlyc(B_1,B_2)$ for several Banach spaces of continuous functions.
%%%%%%%%%%%%%%%%%%%%%%%%%%%%%%%%%%%%%%%%%%%%%%%%%
\begin{definition}
Put
\[
\text{
$\Mmc=\{T\in M(B_1,B_2); T$ is complex-linear $\}$}
\]
\[
\text{
$\isonly_{\mathbb C}(B_1,B_2)=\{T\in \isonly(B_1,B_2); T$ is complex-linear $\}$}.
\]
Recall that 
 $T\in \Mmc$ is local in $\isonlyc(B_1,B_2)$ if for every $f\in B_1$, there exists $T_f\in \isonlyc(B_1,B_2)$ such that 
\[
T(f)=T_f(f).
\]
\end{definition}
%%%%%%%%%%%%%%%%%%%%%%%%%%%%%%%%%%%%%%%%%%%%%%%%%%%%%%
%%%%%%%%%%%%%%%%%%%%%%%%%%%%%%%%%%%%%%%%%%%%%%%%%%
%\begin{definition}

%\end{definition}
%%%%%%%%%%%%%%%%%%%%%%%%%%%%%%%%%%%%%%%%%%%%%%%%%%%%%%%%
%%%%%%%%%%%%%%%%%%%%%%%%%%%%%%%%%%%%%%%%%%%%%%%%%%%%%%%%%%
%\begin{definition}
We say that $\isonlyc(B_1,B_2)$ is iso-reflexive if every local map in $\isonlyc(B_1,B_2)$ is an element in $\isonlyc(B_1,B_2)$.
%\end{definition}
%%%%%%%%%%%%%%%%%%%%%%%%%%%%%%%%%%%%%%%%%%%%%%%%%%%%
%%%%%%%%%%%%%%%%%%%%%%%%%%%%%%%%%%%%%%%%%%%%%%%%
\begin{prop}\label{localis2-local}
Suppose that $T\in \Mmc$ is local in $\isonlyc(B_1,B_2)$. Then $T$ is 2-local in $\isonly(B_1,B_2)$.
\end{prop}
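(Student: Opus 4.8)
The plan is to exploit the fact that a complex-linear surjective isometry, evaluated on just \emph{two} vectors, agrees with a suitable ``rank-two'' building block, which itself can be assembled from local data. Concretely, fix $f, g \in B_1$. By the hypothesis that $T$ is local in $\isonlyc(B_1,B_2)$ there are $T_f, T_g \in \isonlyc(B_1,B_2)$ with $T(f) = T_f(f)$ and $T(g) = T_g(g)$. The goal is to produce a \emph{single} $S \in \isonly(B_1,B_2)$ with $S(f) = T(f)$ and $S(g) = T(g)$. The natural candidate is $S = T_f$ itself if it already happens to satisfy $T_f(g) = T(g)$; the difficulty is of course that in general $T_f(g) \neq T_g(g) = T(g)$. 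So the real content is to modify $T_f$ on the ``$g$-direction'' without disturbing its value at $f$, staying inside $\isonly(B_1,B_2)$ (note: here we are allowed to land in the larger, non-linear isometry group, which gives crucial extra room).

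The key step I would carry out is the following. Consider the element $h := T_f^{-1}\bigl(T(g)\bigr) \in B_1$; since $T_f$ is a surjective linear isometry this is well defined, and $\|h\| = \|T(g)\| = \|g\|$ (the last equality because $T_g$ is an isometry and $T(g)=T_g(g)$). Thus $g$ and $h$ are two vectors of the same norm in $B_1$. I would then invoke the existence of a surjective (possibly non-linear) isometry $R$ of $B_1$ onto itself that fixes $f$ (equivalently fixes $T_f^{-1}(T(f)) = f$) and sends $g$ to $h$ — this is exactly the kind of statement that holds for the function spaces under consideration, but at this level of generality one wants it stated abstractly. Granting such an $R$, the composition $S := T_f \circ R$ lies in $\isonly(B_1,B_2)$ (composition of isometries, surjective since both factors are), satisfies $S(f) = T_f(R(f)) = T_f(f) = T(f)$, and $S(g) = T_f(R(g)) = T_f(h) = T(g)$. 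Hence $S$ witnesses the $2$-locality of $T$ at the pair $(f,g)$, and since $f,g$ were arbitrary, $T$ is $2$-local in $\isonly(B_1,B_2)$.

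The step I expect to be the main obstacle is precisely the production of the auxiliary surjective isometry $R$ of $B_1$ fixing $f$ and carrying $g$ to the prescribed equinorm vector $h$. In full generality this need not exist, so the honest version of the argument must either (i) restrict to $B_j$ for which the isometry group is rich enough — e.g.\ when translations by arbitrary elements are isometries, or when there is a transitive-enough action on spheres — and cite the relevant structural description (Theorem \ref{uniform}, Theorem \ref{miura}, or the $C^1$/Lipschitz descriptions used in the Applications section), or (ii) replace the single pair $(f,g)$ by a reduction in which $f$ can first be translated to $0$. In the second route I would first note that, if one additionally knows $0 \in \isonly$-orbit considerations behave well, one may assume $f = 0$ after composing with a translation-type isometry; then one only needs an isometry fixing $0$ and moving $g$ to $h$, which on these spaces is supplied by a unimodular multiplication together with a composition operator coming from the symmetry group of $M_1$. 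I would therefore structure the proof so that the general mechanism ($S = T_f \circ R$) is isolated as the heart of the matter, and the existence of $R$ is verified by appeal to the already-recorded descriptions of $\isonly(B_1,B_2)$ in each concrete case.
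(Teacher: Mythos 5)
Your argument has a genuine gap, and it sits exactly where you flag it: the existence of the auxiliary surjective isometry $R$ of $B_1$ fixing $f$ and sending $g$ to $h=T_f^{-1}(T(g))$. Nothing in the hypotheses of the proposition provides such an $R$; the proposition is stated for arbitrary unital semisimple commutative Banach algebras $B_1,B_2$, and the isometry groups of the function spaces in this paper are quite rigid (translations composed with weighted composition operators, possibly with conjugation), so there is in general no isometry fixing a prescribed $f$ and carrying a prescribed $g$ to a prescribed equinorm vector $h$. Your fallback of verifying this case by case against Theorem \ref{uniform}, Theorem \ref{miura}, etc.\ would at best prove weaker, space-specific statements, not the proposition as stated, and even in those concrete cases the required $R$ typically does not exist. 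You also never use the hypothesis $T\in\Mmc$, i.e.\ that $T$ itself is complex-linear, which is the decisive ingredient.

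The paper's proof is much shorter and avoids your obstacle entirely: given $f,g$, apply locality not to $f$ and $g$ separately but to the single element $f-g$, obtaining $T_{f,g}\in\isonlyc(B_1,B_2)$ with $T(f-g)=T_{f,g}(f-g)$. Since both $T$ and $T_{f,g}$ are complex-linear, this gives $T(f)-T(g)=T_{f,g}(f)-T_{f,g}(g)$. Setting $h_{f,g}=T(f)-T_{f,g}(f)$, one has $T(f)=h_{f,g}+T_{f,g}(f)$ and $T(g)=h_{f,g}+T_{f,g}(g)$, and the map $h_{f,g}+T_{f,g}(\cdot)$ is a surjective (affine, hence generally non-linear) isometry, so it lies in $\isonly(B_1,B_2)$ and witnesses $2$-locality at the pair $(f,g)$. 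The only ``extra room'' in the non-linear group that is needed is the translation by $h_{f,g}$ --- no transitivity of the isometry group on spheres, and no auxiliary $R$, is required. You correctly sensed that passing to the non-linear group is what gives freedom, but the freedom to exploit is translation invariance combined with the linearity of $T$, not richness of the stabilizer of $f$.
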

%%%%%%%%%%%%%%%%%%%%%%%%%%%%%%%%%%%%%%%%%%%%%%%%%%%
%%%%%%%%%%%%%%%%%%%%%%%%%%%%%%%%%%%%%%%%%%%%%%%%%%
\begin{proof}
Let $f,g\in B_1$ be arbitrary. Then there exists $T_{f,g}\in \isonlyc(B_1,B_2)$ such that 
\[
T(f-g)=T_{f,g}(f-g).
\]
As $T$ and $T_{f,g}$ are complex-linear, we have
\begin{equation}\label{grgr}
T(f)-T(g)=T_{f,g}(f)-T_{f,g}(g).
\end{equation}
Put 
\[
h_{f,g}=T(f)-T_{f,g}(f).
\]
By \eqref{grgr} we have 
\[
T(f)=h_{f,g}+T_{f,g}(f),
\]
\[
T(g)=h_{f,g}+T_{f,g}(g).
\]
It is easy to see that $h_{f,g}+T_{f,g}(\cdot) \in\isonly(B_1,B_2)$. It follows that $T$ is 2-local in $\isonly(B_1,B_2)$.
\end{proof}
%%%%%%%%%%%%%%%%%%%%%%%%%%%%%%%%%%%%%%%%%%%%%%%%
%%%%%%%%%%%%%%%%%%%%%%%%%%%%%%%%%%%%%%%%%%%%%%%%%
\begin{theorem}
%Suppose that $\isonly(B_1,B_2)\subset \G$. 
Suppose that every 2-local map in $\isonly(B_1,B_2)$ is an element in $\isonly(B_1,B_2)$. Then $\isonlyc(B_1,B_2)$ is iso-reflexive.
\end{theorem}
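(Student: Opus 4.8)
The plan is to deduce this immediately from Proposition \ref{localis2-local} together with the standing hypothesis of the theorem. First I would fix an arbitrary $T \in \Mmc$ that is local in $\isonlyc(B_1,B_2)$; the goal is to show $T \in \isonlyc(B_1,B_2)$. By Proposition \ref{localis2-local}, such a $T$ is automatically 2-local in $\isonly(B_1,B_2)$. Here the complex-linearity of both $T$ and the local witnesses $T_{f,g}\in\isonlyc(B_1,B_2)$ is exactly what converts the one-point identity $T(f-g)=T_{f,g}(f-g)$ into the two-point condition for the translated map $h_{f,g}+T_{f,g}(\cdot)$, which is a surjective (not necessarily linear) isometry.

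Next I would invoke the hypothesis: every 2-local map in $\isonly(B_1,B_2)$ is an element of $\isonly(B_1,B_2)$. Applying this to $T$ gives $T \in \isonly(B_1,B_2)$, i.e. $T$ is a surjective isometry of $B_1$ onto $B_2$. But $T$ was assumed to lie in $\Mmc$, hence is complex-linear, so in fact $T \in \isonly(B_1,B_2)\cap\Mmc=\isonlyc(B_1,B_2)$. Since $T$ was an arbitrary local map in $\isonlyc(B_1,B_2)$, this says precisely that $\isonlyc(B_1,B_2)$ is iso-reflexive.

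There is essentially no obstacle here: all the substance is already carried by Proposition \ref{localis2-local} (whose proof records the short algebraic manipulation with the translated isometry $h_{f,g}+T_{f,g}(\cdot)$) and by the hypothesis. The only point meriting a sentence of care is that the ``local'' assumption must be read with the correct target set, namely $\isonlyc(B_1,B_2)$, so that the witnesses $T_{f,g}$ are genuinely complex-linear and Proposition \ref{localis2-local} applies verbatim; beyond that the argument is a two-line chain of implications.
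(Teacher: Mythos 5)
Your argument is correct and is essentially identical to the paper's own proof: apply Proposition \ref{localis2-local} to see that a local map in $\isonlyc(B_1,B_2)$ is 2-local in $\isonly(B_1,B_2)$, invoke the hypothesis to conclude $T\in\isonly(B_1,B_2)$, and use the complex-linearity of $T\in\Mmc$ to place it in $\isonlyc(B_1,B_2)$. Nothing further is needed.
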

%%%%%%%%%%%%%%%%%%%%%%%%%%%%%%%%%%%%%%%%%%%%%%%%
%%%%%%%%%%%%%%%%%%%%%%%%%%%%%%%%%%%%%%%%%%%%%%%%
\begin{proof}
Suppose that $T\in \Mmc$ is local in $\isonlyc(B_1,B_2)$. Then by Proposition \ref{localis2-local}, $T$ is 2-local in $\isonly(B_1,B_2)$. By assumption, we have $T\in \isonly(B_1,B_2)$. Since $T$ is complex-linear, we infer that $T \in \isonlyc(B_1,B_2)$.
\end{proof}
%%%%%%%%%%%%%%%%%%%%%%%%%%%%%%%%%%%%%%%%%%%%%%%%%
%%%%%%%%%%%%%%%%%%%%%%%%%%%%%%%%%%%%%%%%%%%%%%%%%
Applying Corollaries \ref{C(X)},\ref{diskalgebra},\ref{Lip2},\ref{c101} and \ref{S}, we see that 
%\begin{example}
$\isonlyc(C(X_1),C(X_2))$ for first countable compact Hausdorff spaces $X_1$ and $X_2$, $\isonlyc(A(\bar{\mathbb D}),A(\bar{\mathbb D}))$, $\isonlyc(\Lip(X_1),\Lip(X_2))$,  
$\isonlyc(C^1[0,1],C^1[0,1])$ and $\isonlyc(\Sd,\Sd)$ are iso-reflexive.
%\end{example}

%%%%%%%%%%%%%%%%%%%%%%%%%%%%%%%%%%%%%%%%%%%%%%%%%%%%%%%%%%%%%%%%%%%%%%
%%%%%%%%%%%%%%%%%%%%%%%%%%%%%%%%%%%%%%%%%%%%%%%%%%%%%%%%%%%%%%%%%%%%%%%%%%%%%%%%%%%%%%%%%%%%%%%%%%%%%%%%%%%%%%%%%%%%%%%%%%%%%%%%%%%%%%%%%%%%%%%%%%%%%%%%%%%%%%%%%%%%%%%%%%%%%%%%%%%%%%%%%%%%%%%%%%%%%%%%%%%%%%%%%%%%%%%%%%%%%%%%%%%%%%%%%%%%%%%%%%%%%%%%%%%%%%%%%%%%%%%%%%%%%%%%%%%%%%%%%%%%%%%%%%%%%%%%%%%%%%%%%%%%%%%%%%%%%%%%%%%%%%%%%%%%%%%%%%%%%%%%%%%%%%%%%%%%%%%%%%%%%%%%%%%%%%%%%%%%%%%%%%%%%%%%%%%%%%%%%%%%%%%%%%%%%%%%%%%%%%%%%%%%%%%%%%%%%%%%%%%%%%%%%%%%%%%%%%%%%%%%%%%%%%%%%%%%%%%%%%%%%%%%%%%%%%%%%%%%%%%%%%%%%%%%%%%%%%%%%%%%%%%%%%%%%%%%%%%%%%%%%%%%%%%%%%%%%%%%%%%%%%%%%%%%%%%%%%%%%%%%%%%%%%%%%%%%%%%%%%%%%%%%%%%%%%%%%%%%%%%%%%%%%%%%%%%%%%%%%%%%%%%%%%%%%%%%%%%%%%%%%%%%%%%%%%%%%%%%%%%%%%%%%%%%%%%%%%%%%%%%%%%%%%%%%%%%%%%%%%%%%%%%%%%%%%%%%%%%%%%%%%%%%%%%%%%%%%%%%%%%%%%%%%%%%%%%%%%%%%%%%%%%%%%%%%%%%%%%%%%%%%%%%%%%%%%%%%%%%%%%%%%%%%%%%%%%

%%%%%%%%%%%%%%%%%%%%%%%%%%%%%%%%%%%%%%%%%%%%%%%%%%%%%%%%%%%%%%%%%%%%%%%%%%%%%%%%%%%%%%%%%%%%%%%%%%%%%%%%%%%%%%%%%%%%%%%%%%%%%%%%%%%%%%%%%%%%%%%%%%%%%%%%%%%%%%%%%%%%%%%%%%%%%%%%%%%%%%%%%%%%%%%%%%%%%%%%%%%%%%%%%%%%%%%%%%%%%%%%%%%%%%%%%%%%%%%%%%%%%%%%%%%%%%%%%%%%%%%%%%%%%%%%%%%%%%%%%%%%%%%%%%%%%%%%%%%%%%%%%%%%%%%%%%%%%%%%%%%%%%%%%%%%%%%%%%%%%%%%%%%%%%%%%%%%%%%%%%%%%%%%%%%

\begin{thebibliography}{99}
%%%%%%%%%%%%%%%%%%%%%%%%%%%%%%%%%%%%%%%%%%%%%%%%%%%%%%%%%%%%%%%
\bibitem{ahhf}
H.~Al-Halees and R.~Fleming,
\emph{
On 2-local isometries on continuous vector valued function spaces},
J. Math. Anal. Appl. {\bf 354} (2009), 70--77
doi:10.1016/j.jmaa.2008.12.023

\bibitem{bamol}
C.~J.~K.~Batty and L.~Moln\'ar,
\emph{
On topological reflexivity of the groups of $*$-automorphisms and surjective isometries of $B(H)$},
Arch. Math. {\bf 67} (1996), 415--421

\bibitem{bjm}
F.~Botelho, J.~Jamison and L.~Moln\'ar,
\emph{
Algebraic reflexivity of isometry groups and automorphism groups of some operator structures}
J. Math. Anal. Appl. {\bf 408} (2013), 177--195
doi:10.1016/j.jmaa.2013.06.001

\bibitem{cp}
J.~C.~Cabello and A.~M.~Peralta,
\emph{
Weak-2-local symmetric maps on $C^*$-algebras},
Linear Algebra Appl. {\bf 494} (2016), 32--43
doi:10.1016/j.laa.2015.12.024


\bibitem{cs}
F.~Cabello S\'anchez,
\emph{
Automorphisms of algebras of smooth functions and equivalent functions},
Differential Geom. Appl. {\bf 30} (2012), 216--221
doi:10.1016/j.difgeo.2012.03.003

\bibitem{gleason}
A.~M.~Gleason,
\emph{
A characterization of maximal ideals}, 
J. Analyse Math. {\bf 19} (1967), 171--172

\bibitem{gy}
M.~Gy\H{o}ry,
\emph{2-local isometries of $C_0(X)$},
Acta Sci. Math. (Szeged) {\bf 67} (2001), 735--746

\bibitem{hm}
O.~Hatori and T.~Miura,
\emph{
Real linear isometries between function algebras. II},
Cent. Eur. J. Math. {\bf 11} (2013), 838--1842
doi:10.2478/s11533-013-0282-0

\bibitem{hmto}
O.~Hatori, T.~Miura, H.~Oka and H.~Takagi,
\emph{
2-Local Isometries and 2-Local Automorphisms
on Uniform Algebras},
Int. Math. Forum {\bf 50} (2007), 2491--2502
doi:10.12988/imf.2007.07219

\bibitem{haoi}
O.~Hatori and S.~Oi,
\emph{
2-local isometries on function spaces},
to appear in Contemp. arXiv:1812.10342

\bibitem{jcv}
A.~Jim\'enez-Vargas, A.~Morales Compoy and M.~Villegas-Vallecillos,
\emph{
Algebraic reflexivity of the isometry group of some spaces of Lipschitz functions},
J. Math. Anal. Appl. {\bf 366} (2010), 195--201
doi:10.1016/j.jmaa.2010.01.034

\bibitem{jlp}
A.~Jim\'enez-Vargas, L.~Li, A.~M.~Peralta, L.~Wang and Y.-S~Wang,
\emph{
2-local standard isometries on vector-valued Lipschitz function spaces},
 J. Math. Anal. Appl. {\bf 461} (2018), 1287--1298
doi:10.1016/j.jmaa.2018.01.029
%arXiv:1708.0289v1, 9 Aug 2017.

\bibitem{jvvv}
A.~Jimenez-Vargas and M.~Villegas-Vallecillos,
\emph{
2-local isometries on spaces of Lipschitz functions},
Canad. Math. Bull. {\bf 54} (2011), 680--692
doi:10.4153/CMB-2011-25-5

\bibitem{kz1}
J.~P.~Kahane and W.~\.Zelazko
\emph{
A characterization of maximal ideals in commutative Banach algebras},
Studia Math. {\bf 29} (1968), 339--343

\bibitem{kkm}
K.~Kawamura, H.~Koshimizu and T.~Miura,
\emph{
Norms on $C^1([0,1])$ and their isometries},
Acta Sci. Math. (Szeged), {\bf 84} (2018), 239--261 
doi:10.14232/actasm-017-331-0


\bibitem{ks}
S.~Kowalski and Z. S\l odkowski,
\emph{
A characterization of multiplicative linear functionals in Banach algebras},
Studia Math. {\bf 67} (1980), 215--223


\bibitem{lpww}
L.~Li. A~.M.~Peralta, L.~Wang and Y.-S~Wang,
\emph{
Weak-2-local isometries on uniform algebras and Lipschitz algebras}
Publ. Mat. {\bf 63} (2019), 241--264
doi:10.5565/PUBLMAT6311908

\bibitem{ma}
P.~Mankiewicz,
\emph{
On the differentiability of Lipschitz mappings in Fr\'echet spaces},
Studia Math.{\bf 45}(1973), pp.15--29


%\bibitem{m}
%T.~Miura,
%\emph{
%Real-linear isometries between function algebras},
%Cent. Eur. J. Math. {\bf 9} (2011), 778--788
%doi:10.2478/s11533-011-0044-9

%\bibitem{mconm2015}
%T.~Miura,
%\emph{
%Surjective isometries between function spaces},
%Contemp. Math. {\bf 645} (2015), 231--239
%doi:10.1090/conm/645/12926

\bibitem{mi}
T.~Miura,
\emph{
Surjective isometries on a Banach space of analytic functions on the open unit disc},
preprint, 2019,arXiv:1901.02737 

\bibitem{mt}
T.~Miura and H.~Takagi,
\emph{
Surjective isometries on the Banach space of continuously differentiable functions},
Contemp. Math. 
{\bf 687} (2017), 181--192 
doi:10.1090/conm/687/13787


\bibitem{mol2loc2}
L.~Moln\'ar,
\emph{
On 2-local *-automorphisms and 2-local isometries of ${\it B}(H)$}, to appear in J. Math. Anal. Appl.

\bibitem{mo2loc}
L.~Moln\'ar,
Private communication with O. Hatori, 2018

\bibitem{molbook}
L.~Moln\'ar,
\emph{
Selected Preserver Problems on Algebraic Structures of Linear operators and on Function Spaces},
Springer, Berlin, 2007

\bibitem{mol2l}
L.~Moln\'ar,
\emph{
2-local isometries of some operator algebras
},
Proc. Edinb. Math. Soc. (2) {\bf 45} (2002), 349--352 
doi:10.1017/S0013091500000043

\bibitem{molarch}
L.~Moln\'ar,
\emph{
Reflexivity of the automorphism and isometry groups of $C^*$-algebra in BDF theory},
Arch. Math. {\bf 74} (2000), 120--128

\bibitem{molgyo}
L.~Moln\'ar and M.~Gy\"ory,
\emph{
Reflexivity of the automorphism and isometry groups of the suspension of $B(H)$},
J. Funct. Anal. {\bf 159} (1998), 568--586
doi:10.1006/jfan.1998.3325

\bibitem{molzal}
L.~Moln\'ar and B.~Zalar,
\emph{
Reflexivity of the group of surjective isometries on some Banach spaces},
Proc. Edinb. Math. Soc. {\bf 42} (1999), 17--36
doi:10.1017/S0013091500019982

%%\bibitem{rr}
%%N.~V.~Rao and A.~K.~Roy,
%\emph{
%Linear isometries of some function spaces},
%Pacific J. Math. {\bf 38} (1971), 177--192 


\bibitem{hamo}
M.~Mori,
\emph{
On 2-local nonlinear surjective isometries on normed spaces and C*-algebras},
preprint, arXiv:1907.02172

\bibitem{so}
S.~Oi,
\emph{
Algebraic reflexivity of isometry groups of algebras of Lipschitz maps
},
Linear Algebra Appl. {\bf 566} (2019), 167--182
doi:10.1016/j.laa.2018.12.033


\bibitem{smrl}
P.~\v Semrl,
\emph{
Local automorphisms and derivations on $B(H)$},
Proc. Amer. Math. Soc. {\bf 125} (1997), 2677--2680
doi:10.1090/S0002-9939-97-04073-2

\bibitem{zelazko}
W.~\.Zelazko,
\emph{
A characterization of multiplicative linear functionals in complex Banach algebras}, 
Studia Math. {\bf 30} (1968), 83--85


\end{thebibliography}
\end{document}